
\documentclass[11pt]{amsart}
\usepackage{hyperref}
\usepackage{amsfonts}
\usepackage{amsmath}
\usepackage{amssymb}
\usepackage{amscd}
\usepackage{graphicx}
\usepackage{latexsym}
\usepackage{color}
\usepackage{epsfig}
\usepackage{amsopn}
\usepackage{xypic}
\usepackage{mathrsfs}
\usepackage{array}

\sloppy

\setlength{\oddsidemargin}{0cm} \setlength{\evensidemargin}{0cm}
\setlength{\textwidth}{6.5in} 

\setlength{\topmargin}{-.4in}
\setlength{\textheight}{9.0in} \setlength{\headheight}{.2in} 
\setlength{\headsep}{.2in} \setlength{\baselineskip}{0pt}
\setlength{\parskip}{.5mm}


\usepackage{booktabs}
\usepackage{longtable}
\setlength{\LTpost}{-15pt}

\theoremstyle{plain}
\newtheorem{lemma}{Lemma}[section]
\newtheorem*{theorem*}{Theorem}
\newtheorem*{lemma*}{Lemma}
\newtheorem*{proposition*}{Proposition}
\newtheorem*{conjecture*}{Conjecture}
\newtheorem*{corollary*}{Corollary}
\newtheorem*{problem*}{Problem}
\newtheorem{theorem}[lemma]{Theorem}

\newtheorem{corollary}[lemma]{Corollary}
\newtheorem{proposition}[lemma]{Proposition}
\newtheorem{problem}[lemma]{Problem}

\theoremstyle{definition}
\newtheorem{definition}[lemma]{Definition}
\newtheorem{example}[lemma]{Example}
\newtheorem{remark}[lemma]{Remark}

\newtheorem{warning}[lemma]{Warning}

\newcommand{\F}[1]{\mathscr{#1}}

\newcommand{\Z}{\mathbb{Z}}
\newcommand{\N}{\mathbb{N}}

\newcommand{\C}{\mathbb{C}}
\newcommand{\Q}{\mathbb{Q}}
\newcommand{\R}{\mathbb{R}}

\newcommand{\OO}{\mathcal{O}}
\newcommand{\te}{\otimes}
\newcommand{\sm}{\setminus}

\newcommand{\id}{\mathrm{id}}

\newcommand{\cA}{\mathcal A}
\newcommand{\cM}{\mathcal M}
\newcommand{\cP}{\mathcal P}

\newcommand{\ZZ}{\mathbb{Z}}

\renewcommand{\P}{\mathbb{P}}
\newcommand{\PP}{\mathbb{P}}

\DeclareMathOperator{\ch}{ch}

\DeclareMathOperator{\Hom}{Hom}

\DeclareMathOperator{\Pic}{Pic}

\DeclareMathOperator{\NS}{NS}
\DeclareMathOperator{\Kom}{Kom}

\DeclareMathOperator{\ord}{ord}
\DeclareMathOperator{\Mov}{Mov}
\DeclareMathOperator{\Ext}{Ext}
\DeclareMathOperator{\ext}{ext}
\DeclareMathOperator{\Supp}{Supp}
\DeclareMathOperator{\ev}{ev}

\DeclareMathOperator{\sHom}{\mathcal{H} \textit{om}}
\DeclareMathOperator{\GL}{GL}
\DeclareMathOperator{\SL}{SL}

\DeclareMathOperator{\Eff}{Eff}

\DeclareMathOperator{\pare}{par}
\DeclareMathOperator{\udim}{\underline{\dim}}
\DeclareMathOperator{\edim}{edim}

\DeclareMathOperator{\Fitt}{Fitt}

\begin{document}

\date{\today}
\author[I. Coskun]{Izzet Coskun}
\author[J. Huizenga]{Jack Huizenga}
\author[M. Woolf]{Matthew Woolf}
\address{Department of Mathematics, Statistics and CS \\University of Illinois at Chicago, Chicago, IL 60607}
\email{coskun@math.uic.edu}
\email{huizenga@math.uic.edu}
\address{Department of Mathematics, Harvard University, 1 Oxford Street, Cambridge, MA 02138}
\email{mwoolf@math.harvard.edu}
\thanks{During the preparation of this article the first author was partially supported by the NSF CAREER grant DMS-0950951535, and an Alfred P. Sloan Foundation Fellowship and the second author was partially supported by a National Science Foundation Mathematical Sciences Postdoctoral Research Fellowship}
\subjclass[2010]{Primary: 14J60. Secondary: 14E30, 14D20, 13D02}

\title[Effective divisors on spaces of sheaves]{The effective cone of the moduli space of sheaves on the plane}

\begin{abstract}
Let $\xi$ be the Chern character of a stable coherent sheaf on $\PP^2$. For every $\xi$, we compute the cone of  effective divisors on the moduli space $M(\xi)$ of semistable sheaves on $\PP^2$ with Chern character $\xi$. The computation hinges on finding a good resolution of the general sheaf in $M(\xi)$. This resolution  is determined by Bridgeland stability and arises from a well-chosen Beilinson spectral sequence.  The existence of a good choice of spectral sequence depends on remarkable number-theoretic properties of the slopes of exceptional bundles.
\end{abstract}

\maketitle

\newcommand{\spacing}[1]{\renewcommand{\baselinestretch}{#1}\large\normalsize}
 \setcounter{tocdepth}{1}
\tableofcontents

\section{Introduction}\label{sec-intro}

In this paper, we determine the cone of effective divisors on any moduli space of semistable sheaves on $\PP^2$. 

The effective and movable cones are among the most important invariants of a variety $X$ (see \cite{Lazarsfeld}). They dictate both the geometry and arithmetic of $X$.   The {\em movable cone} $\Mov(X)$, which is spanned by divisor classes whose stable base locus does not contain any divisorial components,  controls the rational maps from $X$ to other projective varieties. The {\em effective cone} $\Eff(X)$, which is spanned by classes of effective divisors, plays a prominent role in determining the basic birational invariants of $X$ such as the Kodaira dimension and  is expected to dictate the asymptotics of rational points on $X$ via Manin's conjecture (see \cite{Tschinkel}).

Let $\xi$ be the Chern character of a stable coherent sheaf on $\PP^2$. Then the Gieseker moduli space $M(\xi)$, which parameterizes $S$-equivalence classes of semistable sheaves on  $\PP^2$ with Chern character $\xi$, is an irreducible, normal, factorial projective variety (see \cite{LePotierLectures}). 

When $\xi$ is the Chern character of an \emph{exceptional} or \emph{height zero} bundle (see \S \ref{sec-prelim}), $M(\xi)$ is a point or has Picard rank one, respectively. In these cases there is nothing further to discuss. Hence, we will always assume that $M(\xi)$ is not an exceptional or height zero moduli space.  There is a natural pairing on the Grothendieck group $K(\PP^2)$ that associates to two Chern characters $\xi$ and  $\zeta$ the Euler characteristic $\chi(\xi^* , \zeta)$. The Picard group of $M(\xi)$ is a free abelian group of rank two,  naturally identified with $\xi^{\perp}$ in $K(\PP^2)$ with respect to this pairing \cite{DLP}. Correspondingly, the N\'{e}ron-Severi space $\NS(M(\xi))$ is a two-dimensional vector space equal to $\Pic(M(\xi)) \otimes \R$. Therefore, in order to determine the effective and movable cones, it suffices to specify their two extremal rays.

We now describe our results on the effective cone of $M(\xi)$ in greater detail. Since $M(\xi)$ is factorial, in order to construct effective Cartier divisors, it suffices to impose codimension one geometric conditions on sheaves. Brill-Noether conditions provide a systematic method for constructing effective divisors. Let $\zeta\in \xi^\perp$ be the Chern character of a vector bundle $V$.  There are $h^0$- and $h^2$-Brill-Noether loci on $M(\xi)$ depending on whether the slope $\mu(\xi\te \zeta)$ is negative or positive. These are related by duality and are responsible for the two different extremal rays of the effective cone. For simplicity, we will assume that the slope is positive and describe the $h^0$-Brill-Noether loci first and later explain the necessary changes needed for $h^2$-Brill-Noether loci. Define $$D_V := \{ U \in M(\xi): h^0(U\te V) \not= 0\}$$ and endow it with the natural determinantal scheme structure.  The locus $D_V$  is either an effective divisor or $D_V = M(\xi)$. Consequently, if there exists $U \in M(\xi)$ such that $h^0(U \otimes V) = 0$, then $D_V$ is an effective Cartier divisor called the {\em Brill-Noether divisor associated to $V$}. 

\begin{definition}
A  sheaf $U \in M(\xi)$ is {\em cohomologically orthogonal} to a vector bundle $V$ if $h^i(U\te V) =0$ for every $i$.
\end{definition}

If $U$ is cohomologically orthogonal to $V$, then $\chi(U\te V)=0$ and, by the Riemann-Roch Theorem, the discriminant of $V$ is determined by the slope of $V$ (see \S \ref{sec-prelim}). Furthermore, the Brill-Noether divisor $D_V$ is an effective divisor that does not contain $U$ in its base locus. Hence, the following problem naturally arises in studying stable base loci of linear systems on $M(\xi)$. 

\begin{problem}[Higher rank interpolation]\label{prob-int}
Given a sheaf $U\in M(\xi)$, determine the minimum slope $\mu^+\in \Q$ such that \begin{enumerate}
\item $\mu(\xi)+\mu^+\geq 0$, and
\item $U$ is cohomologically orthogonal to a vector bundle $V$ of slope $\mu^+$.
\end{enumerate}
\end{problem}

The name of the problem is a holdover from the case where $M(\xi)$ is a Hilbert scheme of points $\P^{2[n]}$.  In that case the problem amounts to determining when a zero-dimensional scheme $Z$ can impose independent conditions on sections of a vector bundle.  This problem was previously solved in \cite{HuizengaPaper2} for a general scheme $Z$ and in \cite{CoskunHuizenga} for monomial schemes. In this paper, we  will solve the problem for general sheaves in any moduli space $M(\xi)$. This will determine the effective  cone of $M(\xi)$.

Our answer to the interpolation problem is in terms of the classification of stable vector bundles on $\PP^2$ due to Dr\'{e}zet and Le Potier \cite{DLP} and \cite{LePotierLectures}. The classification, which we will recall in \S \ref{sec-prelim}, can best be phrased in terms of the slope $\mu$ and the discriminant $\Delta$. There is a fractal-like curve $\delta(\mu)$ in the $(\mu, \Delta)$-plane such that the moduli space of vector bundles with invariants $(r, \mu, \Delta)$ is positive dimensional if and only if $\Delta \geq \delta(\mu)$ and $r$ is a sufficiently divisible integer so that the Chern class $c_1$ and the Euler characterisitc $\chi$ are integers. In addition, there are exceptional bundles whose moduli spaces are isolated points. The slopes of exceptional bundles are in one-to-one correspondence with dyadic integers (see \S \ref{sec-prelim}).  For each exceptional slope $\alpha$, there is an interval $I_{\alpha} = ( \alpha-x_{\alpha}, \alpha + x_{\alpha})$ such that the curve $\delta(\mu)$ is defined on $(\alpha - x_{\alpha}, \alpha]$ by a parabola $P_1^{\alpha}$ and on $[\alpha, \alpha + x_{\alpha})$ by a parabola $P_2^{\alpha}$ (see \S \ref{sec-prelim} for explicit equations).

Each Chern character $\xi$ determines a (possibly-degenerate, if $r(\xi)=0$) parabola $Q_{\xi}$ in the $(\mu, \Delta)$-plane parameterizing pairs $(\mu, \Delta)$ that are orthogonal to $\xi$. Assume that the positive intersection of $Q_{\xi}$ with the line $\Delta = \frac{1}{2}$ occurs along the interval $I_{\alpha}$ containing the exceptional slope $\alpha$. Our answer depends on 3 cases.

First, in special cases, the invariants of  the exceptional bundle $E_{\alpha}$ of slope $\alpha$ may lie on the parabola $Q_{\xi}$. In this case, let $(\mu^+, \Delta^+)$ be the slope and discriminant of $E_\alpha$. Second, if the invariants of $E_{\alpha}$ are below the parabola $Q_{\xi}$, let $(\mu^+, \Delta^+)$ be the intersection point of $Q_{\xi}$ with the parabola $P_1^{\alpha}$. Finally, if  the invariants of $E_{\alpha}$ are above the parabola $Q_{\xi}$, let $(\mu^+, \Delta^+)$ be the intersection point of $Q_{\xi}$ with the parabola $P_2^{\alpha}$ (see \S \ref{sec-exceptional}). Let $\xi^+$ be the Chern character $(r^+, \mu^+, \Delta^+)$, where $r^+$ is sufficiently high and divisible. The character $\xi^+$ is always a stable Chern character. Let $V$ denote a general point of the moduli space $M(\xi^+)$. 

With this notation, the main theorem of this paper is the following.

\begin{theorem}\label{thm-main-intro}
Let $\xi$ be the Chern character of a stable sheaf such that $M(\xi)$ has Picard rank $2$. Let $U$ be a general point of $M(\xi)$. 
\begin{enumerate}
\item The slope $\mu^+$ is the solution to the higher rank interpolation problem for $U$. 

\item The Brill-Noether divisor $D_V$ spans an extremal ray of $\Eff(M(\xi))$.

\item If the invariants of $E_\alpha$ do not lie on $Q_\xi$, then $D_V$ also spans an extremal ray of $\Mov(M(\xi))$.
\end{enumerate}
\end{theorem}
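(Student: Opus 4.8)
The plan is to establish the three parts in a logical order: first prove that $D_V$ is a genuine effective divisor (not all of $M(\xi)$), then prove the lower bound for interpolation showing $\mu^+$ cannot be beaten, then combine these to get extremality in $\Eff$, and finally handle $\Mov$ separately. The conceptual heart is a \emph{good resolution} of the general sheaf $U \in M(\xi)$: using Bridgeland stability on $\PP^2$, one shows that as one varies the stability condition toward a wall associated to the exceptional bundle $E_\alpha$, the general semistable object acquires a two- or three-term resolution by exceptional bundles (a twisted Beilinson-type complex) whose terms are dictated by the exceptional triple adapted to $\alpha$. The precise shape of this resolution is exactly what the three cases (on, below, above $Q_\xi$) encode. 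Granting the resolution, cohomology vanishing $h^i(U \otimes V) = 0$ for general $V \in M(\xi^+)$ follows by chasing the resolution against $V$, using that $V$ is itself general (so its cohomology against each exceptional bundle in the triple is as simple as possible) and that $\chi(\xi^+ \otimes \xi) = 0$ by the orthogonality construction of $(\mu^+,\Delta^+)$ on $Q_\xi$. This gives a sheaf cohomologically orthogonal to $V$, hence $D_V \subsetneq M(\xi)$ is an effective Cartier divisor and $U \notin \Bs|D_V|$, proving (2) is at least a ray \emph{in} $\Eff$ and that the interpolation answer is \emph{at most} $\mu^+$.

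**Next I would prove the matching lower bound** for the interpolation problem, which upgrades "$D_V$ is effective" to "$D_V$ is \emph{extremal}." Here the idea is that any effective divisor class on $M(\xi)$ can be tested against a well-chosen curve class, and the divisor $D_V$ is dual (up to positive scaling) to the curve swept out by varying the general sheaf in a one-parameter family obtained from the resolution — concretely, a Brill-Noether curve or a family of sheaves with fixed subsheaf/quotient coming from the exceptional bundle $E_\alpha$. One shows this curve has intersection number zero with $D_V$ but positive intersection with every other effective divisor arising from a Brill-Noether locus $D_{V'}$ with $\mu(V') < \mu^+$; since the Picard rank is $2$, pairing against this single curve class pins down one edge of the cone. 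The input one needs is the classification of Dr\'ezet--Le Potier: the parabolas $P_1^\alpha, P_2^\alpha$ bounding $\delta(\mu)$ and the position of $E_\alpha$ relative to $Q_\xi$ determine precisely which exceptional bundle governs the extremal wall, and the number-theoretic properties of exceptional slopes (their correspondence with dyadic rationals, the structure of exceptional triples) guarantee that the resolution one wants actually exists with effective/globally generated building blocks. This is where the three cases genuinely matter: in the "on $Q_\xi$" case the extremal divisor is a single Brill-Noether divisor for the exceptional bundle itself, whereas in the "below"/"above" cases it is a limit of such divisors along $P_1^\alpha$ or $P_2^\alpha$.

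**For part (3)**, the point is that when $E_\alpha$ does not lie on $Q_\xi$, the extremal ray $D_V$ is not just effective but has no divisorial component in its stable base locus. The strategy is to produce, for the general $U$, enough movement: either exhibit $D_V$ as the pullback of an ample (or at least nef) class under a birational contraction $M(\xi) \dashrightarrow M'$ to another moduli space (the Bridgeland-wall picture predicts such a contraction at exactly the wall defining $\mu^+$), or directly show that through a general point of $M(\xi)$ there passes an effective divisor numerically proportional to $D_V$ other than $D_V$ itself, so no fixed divisor sits in the base locus. The cleanest route is the first: identify the wall in the Bridgeland stability manifold corresponding to the exceptional bundle $E_\alpha$, show crossing it induces a birational morphism contracting a locus of codimension $\geq 2$ (this uses that $E_\alpha \notin Q_\xi$, so the relevant destabilizing sub-objects have the right numerics to give a small contraction rather than a divisorial one), and conclude $D_V$ is the pullback of an ample class, hence movable and extremal in $\Mov$.

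**The main obstacle** I expect is constructing and controlling the good resolution uniformly across all $\xi$: one must show that for the general member of \emph{every} non-exceptional, non-height-zero $M(\xi)$ there is a choice of Beilinson spectral sequence (equivalently, a Bridgeland chamber) in which $U$ has a resolution with only the "right" exceptional bundles appearing, with no unexpected higher cohomology. This is genuinely delicate because it requires the exceptional triple adapted to the slope $\alpha$ to interact correctly with the parabola $Q_\xi$ — essentially an arithmetic statement about which exceptional bundles are "closest" to the orthogonal direction — and then a vanishing argument (generic in both $U$ and $V$) to kill all off-diagonal cohomology. The subsidiary difficulty is the lower bound: ruling out that some cleverer bundle $V'$ of smaller slope is still cohomologically orthogonal to the general $U$ amounts to showing the resolution is \emph{optimal}, which again reduces to the number theory of exceptional slopes and the geometry of the Dr\'ezet--Le Potier curve near $I_\alpha$.
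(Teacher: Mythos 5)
Your plan follows the paper's proof in all essentials: the general $U$ is resolved by a Beilinson-type complex of exceptional bundles adapted to the corresponding exceptional slope, cohomological orthogonality with a general $V\in M(\xi^+)$ is obtained by chasing that resolution against the resolution of $V$, extremality comes from a dual moving curve lying in the fibers of the induced dominant map to a Picard-rank-one moduli space (realized in the paper as a moduli space of Kronecker modules), and movability follows because $D_V$ is the pullback of an ample class under that map, whose indeterminacy locus has codimension at least two. The only cosmetic divergence is that when the invariants of $E_\alpha$ do not lie on $Q_\xi$ the map is a fibration with positive-dimensional fibers rather than a small birational contraction, but your fallback argument (many effective divisors proportional to $D_V$ through a general point) is exactly what the pullback structure delivers, so this does not affect the conclusion.
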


The proof of Theorem \ref{thm-main-intro} relies on finding an appropriate resolution of $U$. In \S \ref{sec-beilinson}, we will give a resolution of $U$ in terms of a triad of exceptional bundles. The choice of optimal triad is determined by the intersection of $Q_{\xi}$ with the line $\Delta = \frac{1}{2}$,  and the resolution naturally arises from the Beilinson spectral sequence. This resolution allows one to compute the cohomology of $U \otimes V$ effectively. 

We will show that $M(\xi)$ always admits a dominant rational map to a moduli space of appropriately chosen Kronecker modules (see \S \ref{sec-effective}). As long as the invariants of the exceptional bundle $E_{\alpha}$ do not lie on $Q_{\xi}$, this map has positive dimensional fibers. Consequently, the pullback of the ample generator from the moduli space of Kronecker modules gives the edge of both the effective and movable cones. When the invariants of $E_{\alpha}$ lie on $Q_{\xi}$, the corresponding rational map is birational and contracts a divisor. The exceptional locus gives an extremal edge of the effective cone and the pullback of the ample generator on the moduli space of Kronecker modules defines a movable class. 

The curve $\delta(\mu)$ intersects the line $\Delta = \frac{1}{2}$ in a Cantor set $C$ (see \S \ref{sec-transcendental}), which consists precisely of the real numbers lying in no interval $I_\alpha$.  One of the main technical points that we have to address is to show that the parabola $Q_{\xi}$ does not intersect the line $\Delta = \frac{1}{2}$ along $C$ so that there is always a well-defined interval $I_{\alpha}$. In \S \ref{sec-transcendental}, we will prove that every point of the Cantor set which is not an end point of an interval $I_{\alpha}$ is transcendental (see Theorem \ref{cantorTranscendThm}). We will further show that $Q_{\xi}$ cannot pass through an end point of an interval $I_{\alpha}$. Since the intersection of $Q_{\xi}$ with $\Delta = \frac{1}{2}$ is either a rational number or a quadratic irrational, it follows that $Q_{\xi}$ does not intersect the Cantor set. These results rely on remarkable properties of the continued fraction expansions of points of $C$ and deep Diophantine approximation results such as Roth's Theorem \cite{Roth}  and the Schmidt Subspace Theorem \cite{Schmidt}.

The moduli space $M(\xi)$ admits a morphism $j: M(\xi) \rightarrow M^{DUY}(\xi)$ to the Donaldson-Uhlenbeck-Yau moduli space $M^{DUY}(\xi)$ constructed by Jun Li (see \cite{JunLiDonaldson}, \cite{HuybrechtsLehn}). The nef divisor defining the morphism $j$ spans an extremal  ray $R$ of the nef cone. The divisor constructed in Theorem \ref{thm-main-intro} lies on the same side of $R$ as the ample cone. The other extremal edge of $\Eff(M(\xi))$ lies on the opposite side of the ample cone. When the rank of $\xi$ is at least three,  this edge of  $\Eff(M(\xi))$ is determined by duality and  given by  $h^2$-Brill-Noether divisors corresponding to the negative intersection of the parabola $Q_{\xi}$ with the parabolas defining the curve $\delta(\mu)$.   

In the remaining cases, the second extremal ray of  $\Eff(M(\xi))$ has a simple geometric description. 
When the rank of $\xi$ is one or two, the exceptional locus of the morphism $j$ is divisorial and spans this ray.  In rank 2, the locus of sheaves in $M(\xi)$ that fail to be locally free is an effective divisor which is  contracted by the morphism $j$. Similarly, the other edge of the movable cone is given by the nef divisor that defines $j$. This case was discovered earlier by Stromme \cite{Stromme}. When the rank is one, the Donaldson-Uhlenbeck-Yau moduli space coincides with the symmetric product $\PP^{2(n)}$ and $j$ is the Hilbert-Chow morphism.  The other extremal ray of the effective cone is  spanned by the divisor of non-reduced schemes. The other extremal ray of the movable cone is spanned by $j^*(\OO_{\PP^{2(n)}}(1))$ (see \cite{ABCH}). 

When $\xi$ has rank zero and $M(\xi)$ has Picard rank $2$, then we must have $d:= c_1(\xi) \geq 3$, and $M(\xi)$ parameterizes $S$-equivalence classes of pure $1$-dimensional sheaves with Chern character $\xi$.  Let $s: M(\xi) \rightarrow |\OO_{\P^2}(d)|=\P^N$ be the morphism sending a sheaf to its support.  Then $s^* \OO_{\P^N}(1)$ spans the other extremal ray of both the movable and the effective cones \cite{Woolf}.

\subsection*{Organization of the paper}  In \S \ref{sec-prelim}, we will recall basic facts concerning coherent sheaves, exceptional bundles, the classification of stable bundles, the Picard group of the moduli space, and Brill-Noether divisors.  In \S \ref{sec-exceptional}, we will show how to associate an exceptional bundle to a given Chern character $\xi$ and state our theorem on the effective cone precisely. In \S \ref{sec-transcendental}, we will study the number-theoretic properties of points in the Cantor set $C$ and show that every point of $C$ which is not an end point of an interval is transcendental. In \S \ref{sec-beilinson}, we will use the corresponding exceptional bundle to choose an appropriate triad of exceptional bundles.  We then use the Beilinson spectral sequence to determine  an important resolution of the general sheaf $U\in M(\xi)$.  In \S \ref{sec-kronecker}, we define a rational map from the moduli space of sheaves on $\PP^2$ to a moduli space of Kronecker modules. This map arises from the special resolution of the general sheaf.  In \S \ref{sec-effective},  we will determine the effective cone of the moduli space of sheaves.  We close the paper with a sample explicit computation of the effective cone of a moduli space.

\subsection*{Acknowledgments} We are grateful to Daniele Arcara, Arend Bayer, Aaron Bertram, Lawrence Ein, Joe Harris, Brendan Hassett, Emanuele Macr\`{i}  and Yuri Tschinkel for many useful conversations about the geometry of the moduli spaces of sheaves.  We would also like to thank the Vietnam Institute for Advanced Study in Mathematics (VIASM) for providing a wonderful working environment while this work was completed.

\section{Preliminaries}\label{sec-prelim}

In this section, we collect basic facts concerning the classification of stable bundles on $\PP^2$ and the geometry of the Gieseker moduli spaces of semistable sheaves on $\PP^2$. We refer the reader to books by Le  Potier \cite{LePotierLectures} and Huybrechts and Lehn \cite{HuybrechtsLehn} for more details.

\subsection{Stability; slope and discriminant; Euler pairing}  In this paper all sheaves will be coherent. Let $E$ be a coherent sheaf.  The dimension $\dim E$ of $E$ is the dimension of the support $\Supp(E)$.  We say $E$ is \emph{pure of dimension $d$} if $\dim E = d$ and every non-trivial coherent subsheaf $F\subset E$ has $\dim F = d$.  If $E$ has dimension $d$, the Hilbert polynomial $P_E(m) = \chi(E(m))$ of $E$ is of the form $$P_E(m)=\alpha_d \frac{m^d}{d!} + O(m^{d-1}).$$ The \emph{reduced Hilbert polynomial} $p_E$ is defined by $$p_E = \frac{P_E}{\alpha_d}.$$ Then $E$ is \emph{(Gieseker) semistable} (resp. \emph{stable}) if $E$ is pure and for every nontrivial $F\subset E$ we have $p_F\leq p_E$ (resp. $<$), where polynomials are compared for all sufficiently large $m$.  

Fix a Chern character $\xi = (r, \ch_1, \ch_2)$. If the rank $r>0$, then the {\em slope} $\mu$ and the {\em discriminant} $\Delta$ are defined by  $$\mu = \frac{\ch_1}{r}, \ \ \ \mbox{and} \ \ \ \Delta =  \frac{1}{2} \mu^2 - \frac{\ch_2}{r}.$$  The rank, slope and discriminant determine the Chern character. Hence, we can equivalently record the Chern character by $\xi = (r, \mu, \Delta)$. The classification of stable vector bundles on $\PP^2$ is most conveniently expressed in terms of  these invariants, so we will primarily use them instead of $\ch_1$ and $\ch_2$. If $r>0$,  the Riemann-Roch formula reads $$\chi(E) = r (P(\mu)-\Delta),$$ where $P(m)$ is the Hilbert polynomial of $\OO_{\PP^2}$ given by $$P(m)= \frac{1}{2}(m^2 + 3m + 2).$$   Furthermore, $$\mu(E \otimes F) = \mu(E) + \mu(F) \qquad \textrm{and} \qquad \Delta(E \otimes F) = \Delta(E) + \Delta(F).$$  These definitions easily extend to Chern characters in $K(\P^2)\te \R$, and we observe that if $0\neq c\in \R$ then $\xi$ and $c \xi$ have the same slope and discriminant.  Note that semistability  of a sheaf $E$ is equivalent to requiring that for every proper subsheaf $F\subset E$ we have $\mu(F)\leq \mu(E)$, with $\Delta(F) \geq \Delta(E)$ in case of equality.  A character $\xi$ is \emph{(semi)-stable} if it is the Chern character of a (semi)-stable sheaf.

Given two sheaves $E$ and $F$, their Euler characteristic is defined by the formula
$$\chi(E, F) = \sum_{i=0}^2 (-1)^i \ext^i(E,F).$$ When both sheaves have non-zero rank, the Euler characteristic is computed by the Riemann-Roch formula
$$\chi(E,F)= r(E)r(F)\left(P(\mu(F)- \mu(E)) - \Delta(E) - \Delta(F)\right).$$
The derived dual induces a homomorphism $K(\PP^2) \rightarrow K(\PP^2)$. We will write $\xi^*$ for the dual Chern character. The Euler characteristic depends only on Chern characters, so it induces a bilinear pairing $(\xi, \zeta) = \chi(\xi^*, \zeta)$  on $K(\PP^2)\te \R$.  Correspondingly,  $\xi^{\perp}$ denotes the orthogonal complement of $\xi$ in $K(\P^2)\te \R$ with respect to this pairing.

\subsection{Exceptional bundles} A stable vector bundle $E$ on $\PP^2$ is called an {\em exceptional bundle} if $\Ext^1(E,E)=0$. Line bundles $\OO_{\PP^2}(n)$ and the tangent bundle $T_{\PP^2}$ are examples of exceptional bundles. An {\em exceptional slope} $\alpha$ is the slope of an exceptional bundle.  If $\alpha$ is an exceptional slope, there is a unique exceptional bundle $E_\alpha$ of slope $\alpha$.  The rank of $E_\alpha$ is the smallest positive integer $r_{\alpha}$ such that $r_{\alpha} \alpha$ is an integer. The discriminant $\Delta_{\alpha}$ is then given by $$\Delta_{\alpha} = \frac{1}{2} \left( 1 - \frac{1}{r_{\alpha}^2}\right).$$ We also write $\xi_\alpha = \ch(E_\alpha)$.  All exceptional bundles can be constructed starting with line bundles via the process of mutation \cite{DrezetBeilinson}. Exceptional bundles are precisely the stable bundles $E$ on $\PP^2$ with $\Delta(E) < \frac{1}{2}$ \cite[Proposition 16.1.1]{LePotierLectures}. They are rigid and their moduli spaces consist of a single reduced point \cite[Corollary 16.1.5]{LePotierLectures}.  

There is a one-to-one correspondence between the set of exceptional slopes $\F E$ and dyadic integers $\varepsilon: \ZZ\left[\frac{1}{2}\right] \rightarrow \F E,$ defined inductively by $\varepsilon(n) = n$ for $n \in \ZZ$ and $$\varepsilon\left( \frac{2p+1}{2^{q+1}} \right) = \varepsilon \left(\frac{p\vphantom{1}}{2^q}\right) . \varepsilon \left( \frac{p+1}{2^q}\right), $$ where $\alpha.\beta$ is defined by $$\alpha . \beta = \frac{\alpha + \beta}{2} + \frac{\Delta_{\beta} - \Delta_{\alpha}}{3+ \alpha - \beta}.$$ Define the {\em order} of an exceptional slope $\alpha \in \F E$ to be the smallest natural number $q$ such that $\alpha = \varepsilon(\frac{p}{2^q})$. 

\begin{example}
We record the first several exceptional slopes in the interval $[0,\frac{1}{2}]$ together with their orders.  

\renewcommand{\arraystretch}{1.3}

$$\begin{array}{c|ccccccccc} \frac{p}{2^q}& 0 & \frac{1}{16} &\frac{1}{8}& \frac{3}{16} & \frac{1}{4} & \frac{5}{16} & \frac{3}{8} & \frac{7}{16} & \frac{1}{2}   \\ \hline  \varepsilon\left(\frac{p}{2^q}\right) & 0 & \frac{13}{34} & \frac 5{13} & \frac{75}{194} & \frac 25 & \frac{179}{433} & \frac{12}{29} & \frac{70}{169} & \frac 12 \\
\ord\left(\varepsilon\left(\frac{p}{2^q}\right)\right) & 0 & 4 & 3 & 4 & 2 & 4 & 3 & 4 & 1
 \end{array}$$
\end{example}

\subsection{The classification of stable bundles}\label{subsec-classify}  The classification of positive dimensional moduli spaces of stable vector bundles on $\PP^2$ is expressed in terms of a fractal-like curve $\delta$ in the $(\mu, \Delta)$-plane. Let  $$\delta(\mu) = \sup_{\{\alpha \in \F E :|\mu- \alpha|<3\}} (P(-|\mu-\alpha|) - \Delta_{\alpha}).$$
For each exceptional slope $\alpha \in \F E$, there is an interval $I_{\alpha} = ( \alpha-x_{\alpha}, \alpha + x_{\alpha})$ such that the function $\delta(\mu)$ is defined on $I_{\alpha}$ by $$\delta(\mu) = P(-|\mu - \alpha|) - \Delta_{\alpha}, \ \ \mbox{if} \ \ \mu\in I_\alpha,$$ where $$x_{\alpha} = \frac{3 - \sqrt{5 + 8 \Delta_{\alpha}}}{2}.$$ The graph of $\delta(\mu)$ is an increasing concave up parabola on the interval $[\alpha - x_{\alpha}, \alpha]$ and a decreasing concave up parabola on the interval $[\alpha, \alpha + x_{\alpha}]$.  The graph of $\delta$ over $I_\alpha$ is symmetric across the vertical line $\mu = \alpha$ for $\alpha \in \F E$ and is invariant under translation by integers. Furthermore, for every $\alpha \in \F E$, $\delta(\alpha \pm x_{\alpha}) = \frac{1}{2}$. The graph of $\delta$ intersects the line $\Delta = \frac{1}{2}$ in a Cantor set $$C := \R - \bigcup_{\alpha \in \F E} I_{\alpha}.$$ Every rational number $q\in \Q$ lies in some interval $I_\alpha$; equivalently, the Cantor set consists entirely of irrational numbers \cite[Theorem 1]{Drezet}.

\begin{figure}[htbp]
\begin{center}
\includegraphics[scale=0.95]{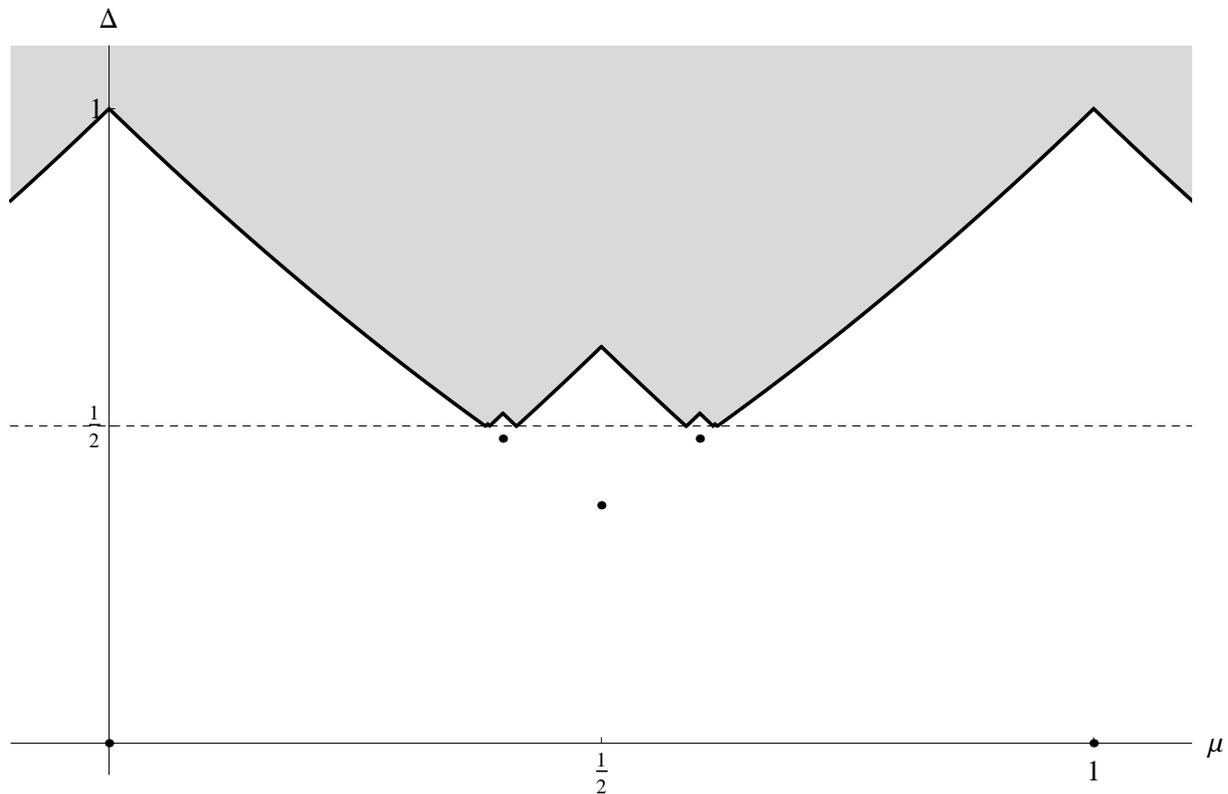}
\end{center}
\caption{The curve $\delta(\mu) =\Delta$ occurring in the classification of stable bundles.  The invariants of the first several exceptional bundles are also displayed.  A Chern character $\xi = (r,\mu,\Delta)$ with $c_1(\xi)\in \Z$ and $\chi(\xi)\in \Z$ has a moduli space $M(\xi)$ of Picard rank $2$ if and only if the point $(\mu,\Delta)$ lies in the shaded region above the curve.  If  $(\mu,\Delta)$ lies on the curve, the moduli space has Picard rank $1$.}
\label{figure-deltaCurve}
\end{figure}

The fundamental theorem on the existence of moduli spaces of semistable sheaves due to Dr\'{e}zet and Le Potier is the following.

\begin{theorem}[\cite{DLP}, \cite{LePotierLectures}]\label{stableThm}
Let $\xi = (r,\mu,\Delta)$ be a Chern character of positive integer rank.  There exists a positive dimensional moduli space of semistable sheaves $M(\xi)$ with Chern character $\xi$ if and only  if $c_1= r \mu \in \ZZ$, $\chi = r(P(\mu) - \Delta) \in \ZZ$ and $\Delta \geq \delta(\mu)$. In this case, $M(\xi)$ is a normal, irreducible, factorial projective variety of dimension $r^2(2 \Delta -1) + 1$.
\end{theorem}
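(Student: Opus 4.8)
The statement is the Dr\'ezet--Le Potier classification theorem, so I will sketch the structure of their argument. The integrality conditions $c_1 = r\mu \in \ZZ$ and $\chi = r(P(\mu)-\Delta) \in \ZZ$ are necessary because they are the first Chern class and Euler characteristic of an honest sheaf. The real content is threefold: necessity of $\Delta \geq \delta(\mu)$, sufficiency (producing a semistable sheaf for every triple on or above the curve), and the geometric properties of $M(\xi)$. For necessity, suppose $E$ is semistable with invariants $(\mu,\Delta)$ and let $E_\alpha$ be an exceptional bundle with $|\mu - \alpha| < 3$. If $\alpha < \mu$, a nonzero map $E \to E_\alpha$ would have image simultaneously a quotient of the semistable sheaf $E$, hence of slope $\geq \mu$, and a subsheaf of the stable bundle $E_\alpha$, hence of slope $< \alpha < \mu$, which is impossible; likewise $\Ext^2(E,E_\alpha) = \Hom(E_\alpha, E \otimes \OO_{\PP^2}(-3))^* = 0$ since $\mu(E(-3)) = \mu - 3 < \alpha$. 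Hence $\chi(E,E_\alpha) = -\ext^1(E,E_\alpha) \leq 0$, which by Riemann--Roch says $r\, r_\alpha (P(\alpha-\mu) - \Delta - \Delta_\alpha) \leq 0$, i.e. $\Delta \geq P(-|\mu-\alpha|) - \Delta_\alpha$. The case $\alpha > \mu$ is symmetric, using $\chi(E_\alpha, E)$ in place of $\chi(E,E_\alpha)$. Taking the supremum over all such $\alpha$ gives $\Delta \geq \delta(\mu)$.

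For sufficiency one must construct a semistable sheaf with invariants $(r,\mu,\Delta)$ whenever $\Delta \geq \delta(\mu)$ and the integrality conditions hold. I would argue by induction on the discriminant, starting from line bundles and exceptional bundles, whose moduli spaces are known. Near each exceptional slope $\alpha$ the curve $\delta$ is governed by the two parabolas attached to $E_\alpha$, so a sheaf with invariants just above the curve can be realized either as the general member of a suitable family of extensions (the m\'ethode d'Hirschowitz) or as the cohomology of a monad assembled from a helix of exceptional bundles adapted to the position of $(\mu,\Delta)$ relative to $\alpha$; the crux is verifying that the general such sheaf is semistable, which again reduces to cohomology-vanishing estimates against the exceptional bundles bounding the region. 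The existence of $M(\xi)$ as a projective scheme is then the general GIT construction of Gieseker, Maruyama and Simpson for semistable sheaves with fixed Hilbert polynomial.

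Finally, for the geometric properties: for a stable sheaf $E$, stability and Serre duality give $\hom(E,E) = 1$ and $\Ext^2(E,E) = \Hom(E, E\otimes\OO_{\PP^2}(-3))^* = 0$, so $M(\xi)$ is smooth at $[E]$ of dimension $\ext^1(E,E) = 1 - \chi(E,E) = r^2(2\Delta - 1) + 1$; since the stable locus is dense, this is $\dim M(\xi)$. Irreducibility follows from the inductive construction, which presents $M(\xi)$ as dominated by an irreducible parameter space. Normality and factoriality are the most delicate points: they come from Dr\'ezet's analysis of the local structure of $M(\xi)$ and of its Picard group, which identifies $\Pic(M(\xi))$ with $\xi^\perp \subset K(\PP^2)$ and controls the codimension and local model of the strictly semistable and singular loci. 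The hardest step overall is the sufficiency direction together with irreducibility and factoriality: all of these rest on the fine combinatorics of exceptional bundles and mutations, whereas the necessity of $\Delta \geq \delta(\mu)$ and the dimension formula are comparatively soft consequences of stability and Riemann--Roch.
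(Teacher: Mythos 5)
This theorem is imported from \cite{DLP} and \cite{LePotierLectures}; the paper states it without proof, so there is no in-paper argument to compare against, and your proposal has to be judged as a reconstruction of the literature. Your necessity argument is the standard one and is essentially right: for $\alpha<\mu$ one kills $\Hom(E,E_\alpha)$ and $\Ext^2(E,E_\alpha)\cong\Hom(E_\alpha,E(-3))^*$ by slope comparison and reads off $\Delta\geq P(-|\mu-\alpha|)-\Delta_\alpha$ from Riemann--Roch, and symmetrically for $\alpha>\mu$. But your dichotomy omits $\alpha=\mu$, and that case cannot be dropped: when $\mu$ equals an exceptional slope $\alpha$, the term $P(0)-\Delta_\alpha=1-\Delta_\alpha$ is exactly the value realized by $\delta$ at the peak of the arc over $I_\alpha$, so it is the binding constraint there. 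Pure slope comparison gives nothing in that case; one must use Gieseker (not just slope) semistability to show $\Hom(E_\alpha,E)=0$ when $\Delta(E)>\Delta_\alpha$ (the image of a nonzero map would be a quotient of $E_\alpha$ and a subsheaf of $E$ with incompatible reduced Hilbert polynomials), and one must separately dispose of the exceptional sheaves themselves, which are semistable with $\Delta<\delta(\mu)$ and are excluded from the statement only by the ``positive dimensional'' hypothesis. Your dimension count $\ext^1(E,E)=1-\chi(E,E)=r^2(2\Delta-1)+1$ at a stable point is correct.

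The remaining three-quarters of the theorem --- sufficiency, irreducibility, normality, and factoriality --- are named rather than proved in your sketch. That is a defensible choice for a black-box background theorem, and you correctly locate where the difficulty lives (construction of semistable sheaves on and just above the curve via exceptional bundles, Dr\'ezet's local analysis for factoriality, GIT for projectivity). For what it is worth, the route this paper itself leans on elsewhere for irreducibility-type statements is the Hirschowitz--Laszlo theory of prioritary sheaves \cite{HirschowitzLaszlo} (the stack of prioritary sheaves of fixed character is irreducible and contains the semistable locus as a dense open substack), which is a cleaner packaging than an induction on the discriminant; if you want a self-contained modern account, that is the lemma to cite for sufficiency and irreducibility simultaneously. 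As written, your proposal is a correct outline with one concrete, fixable gap (the $\alpha=\mu$ case) and several deliberately deferred components.
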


\emph{Height zero} characters $\xi$ are characters with invariants lying on the curve $\Delta = \delta(\mu)$.

Moreover, $M(\xi)$ is a Mori dream space. In particular, the effective cone is closed.  This has been proved in \cite[\S 3]{Woolf}, but only explicitly stated for the rank zero case. We briefly sketch the proof  in the higher rank case and refer to \cite{Woolf} for additional details.

\begin{theorem}
Let $\xi$ be the Chern character of a stable coherent sheaf. Then the moduli space $M(\xi)$ is a Mori dream space. 
\end{theorem}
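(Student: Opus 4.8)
The plan is to exhibit $M(\xi)$ as a GIT quotient, since a projective GIT quotient of an affine space (or of a variety whose Cox ring is finitely generated) by a reductive group is a Mori dream space. For a stable Chern character on $\PP^2$, one realizes every semistable sheaf with character $\xi$ as a quotient of a fixed vector bundle: after twisting, for $n \gg 0$ the sheaf $E(n)$ is globally generated with vanishing higher cohomology, so there is a surjection $\OO_{\PP^2}(-n)^{\oplus N} \twoheadrightarrow E$ with $N = \chi(E(n)) = P_E(n)$ fixed. Thus $M(\xi)$ is a GIT quotient of an open subset of a Quot scheme $\mathrm{Quot}(\OO_{\PP^2}(-n)^{\oplus N}, P_E)$ by $\GL_N$ (or $\SL_N$ after rigidifying), with respect to a suitable linearization. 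The Quot scheme is projective but not in general rational or of Mori dream type, so this alone is not enough; one must instead use a presentation where the ambient space is nice.

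The cleaner route, which is the one I would actually carry out, is to pass to a \emph{Beilinson-type} resolution: for a well-chosen exceptional bundle (or exceptional triple) one shows that the general sheaf $U$, and in fact every semistable sheaf with character $\xi$, admits a resolution by a fixed complex of direct sums of exceptional bundles, so that $M(\xi)$ is birational — indeed isomorphic in codimension one — to a moduli space of representations of a quiver (a space of Kronecker modules or more generally quiver reps with a stability parameter). Moduli of quiver representations are GIT quotients of a \emph{vector space} (the representation space $\bigoplus \Hom(\C^{a_i},\C^{a_j})^{\oplus m_{ij}}$) by $\prod \GL_{a_i}$, hence their Cox rings are finitely generated by Hilbert's theorem together with the GIT description, so they are Mori dream spaces. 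Since the Mori dream property is a birational invariant among normal projective $\QQ$-factorial varieties with the same Cox ring — or, more carefully, is preserved under small modifications and under passing to a variety with finitely generated Cox ring — one deduces that $M(\xi)$ is a Mori dream space. For the rank-zero case one argues similarly using the support morphism $s\colon M(\xi)\to\PP^N$ and the results of \cite{Woolf}.

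The main technical obstacle is establishing that a \emph{single} complex of exceptional bundles resolves \emph{every} semistable sheaf in $M(\xi)$ (not just the general one), or at least every sheaf outside a locus of codimension $\geq 2$, so that the birational identification with a quiver moduli space is an isomorphism in codimension one; controlling the jumping behavior of cohomology of $U\te E_\alpha$ as $U$ varies is exactly what the Beilinson spectral sequence analysis of \S\ref{sec-beilinson} provides. A secondary point is to verify that the relevant quiver moduli space is nonempty and has the expected dimension, which again follows from the numerical setup (the character $\xi^+$ being stable) described in \S\ref{sec-exceptional}. Once these inputs are in place the Mori dream conclusion is formal. I expect the authors to simply cite \cite[\S 3]{Woolf} for the bulk of this and indicate the minor modifications needed to go from the rank-zero statement to the general one, rather than reproducing the quiver-GIT argument in detail.
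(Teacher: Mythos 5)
Your reduction to quiver moduli does not go through, and the obstruction is structural rather than technical. The Kronecker fibration $\pi\colon M(\xi)\dashrightarrow Kr_N(m_1,m_2)$ arising from the Beilinson resolution is in general a genuine fibration: when $(\xi,\xi_{\alpha.\beta})\neq 0$ its general fiber is positive-dimensional (Proposition \ref{fibrationProp}), so $M(\xi)$ is not birational to the Kronecker space at all; and when $(\xi,\xi_{\alpha.\beta})=0$ the map is birational but the locus of sheaves lacking the expected resolution is the Brill--Noether \emph{divisor} $D_{E_{\alpha.\beta}}$ (Proposition \ref{exceptionalDivProp}), not a codimension-two set. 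In either case $M(\xi)$ cannot be isomorphic in codimension one to a Kronecker moduli space, as is already forced by the Picard ranks ($2$ for $M(\xi)$ versus $1$ for any $Kr_N(b,a)$). Since an isomorphism in codimension one between normal $\Q$-factorial projective varieties is exactly what you would need to transport finite generation of the Cox ring --- Mori dreamness is certainly not a birational invariant in general --- the ``formal'' final step of your argument has nothing to stand on. The technical obstacle you flag (a single complex of exceptional bundles resolving every semistable sheaf outside codimension two) is not merely unaddressed; it is false, for the same reason.

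The paper's actual proof runs along entirely different lines: one shows $M(\xi)$ is log Fano and invokes \cite{BCHM}. The anticanonical bundle is nef by \cite[Theorems 8.2.8 and 8.3.3]{HuybrechtsLehn}; $M(\xi)$ is a good quotient of a smooth variety \cite[\S 14.5]{LePotierLectures}, hence has rational singularities by \cite{Boutot}, and being Gorenstein with rational singularities it has canonical singularities by \cite[Theorem 11.1]{Kollar}; finally, the effective cone computation of \S\ref{sec-effective} supplies effective divisors $E$ with $-K_{M(\xi)}-\epsilon E$ ample for small $\epsilon>0$, so $(M(\xi),\epsilon E)$ is a klt log Fano pair. (The rank-zero case is simply cited from \cite{Woolf}, as you anticipated.) If you wanted to salvage a GIT-flavored argument, you would need to present $M(\xi)$ itself, not a birational model of it, as a quotient of an affine space by a reductive group --- and the Beilinson resolution does not furnish such a presentation here.
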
 

\begin{proof}
See \cite{Woolf} for the rank zero case.  We assume $r(\xi)>0$.  By \cite{BCHM}, a log Fano variety is a Mori dream space. Consequently, it suffices to show that $M(\xi)$ is a log Fano variety. By \cite[Theorems 8.2.8 and 8.3.3]{HuybrechtsLehn} the anti-canonical bundle of $M(\xi)$ is nef. By the proof of \cite[Proposition 3.3]{Woolf}, $M(\xi)$ has canonical singularities. Briefly, $M(\xi)$ is the good quotient of a smooth variety \cite[\S 14.5]{LePotierLectures}.  Consequently, $M(\xi)$ has rational singularities \cite{Boutot}. Since $M(\xi)$ is Gorenstein and every rational Gorenstein singularity is canonical \cite[Theorem 11.1]{Kollar}, we conclude that $M(\xi)$ has canonical singularities. Hence, the pair $(M(\xi), \epsilon E)$ is a klt-pair for every effective divisor $E$ and sufficiently small $\epsilon >0$. It will become clear in \S\ref{sec-effective} that there are effective divisors $E$ such that $-K_{M(\xi)} - \epsilon E$ is ample for all  sufficiently small $\epsilon > 0$.  We conclude that $M(\xi)$ is a log Fano variety, hence a Mori dream space. 
\end{proof}

\subsection{Picard group of the moduli space} A  theorem of Dr\'{e}zet determines the Picard group of $M(\xi)$.

\begin{theorem}[Dr\'{e}zet]\label{PicThm}
If $\Delta > \delta(\mu)$, then the Picard group of $M(\xi)$ is a free abelian group on two generators. Furthermore, in this case, $\Pic(M(\xi))\te \R$ is naturally identified with $\xi^{\perp}$. If $\Delta = \delta(\mu)$, then the Picard group of $M(\xi)$ is an infinite cyclic group.  (See Figure \ref{figure-deltaCurve}.)
\end{theorem}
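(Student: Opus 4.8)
The result is due to Dr\'ezet; I would prove it by Geometric Invariant Theory together with the theory of determinant line bundles. \emph{Setup.} For $m\gg 0$ every semistable sheaf $E$ with $\ch E=\xi$ is $m$-regular, so $h^0(E(m))=N:=P_\xi(m)$ and $E(m)$ is globally generated; choosing a basis of $H^0(E(m))$ gives a quotient $\OO_{\PP^2}(-m)^{N}\twoheadrightarrow E$, i.e.\ a point of a Quot scheme $\mathrm{Quot}$. Let $R\subset\mathrm{Quot}$ be the smooth open locus where $E$ is semistable and $H^0$ of the quotient map is an isomorphism. Then $\GL_N$ acts on $R$ through $\mathrm{PGL}_N$, $M(\xi)=R/\!\!/\mathrm{PGL}_N$ is a good quotient, and $R^{s}\to M^{s}(\xi)$ is a principal $\mathrm{PGL}_N$-bundle over the stable locus.

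\emph{The determinant morphism.} Let $\mathcal E$ be the universal quotient on $R\times\PP^2$. For $u\in K(\PP^2)$ put $\lambda(u):=\det\bigl(p_{R!}(\mathcal E\cdot p_{\PP^2}^{*}u)\bigr)\in\Pic(R)$; this is $\GL_N$-linearized, and the central $\mathbb{G}_m\subset\GL_N$ acts on it with weight equal, up to sign, to the Euler pairing $(\xi,u)$. Hence for $u\in\xi^{\perp}$ the center acts trivially, so by Kempf's descent lemma $\lambda(u)$ descends to a line bundle on $M^{s}(\xi)$, and, after checking the action on the orbits of the strictly semistable points (or via the codimension estimate below), to $M(\xi)$ itself. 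This produces a homomorphism $\lambda\colon \xi^{\perp}\cap K(\PP^2)\to\Pic(M(\xi))$, and tensoring with $\R$ a linear map $\xi^{\perp}\to\Pic(M(\xi))\te\R$.

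\emph{Codimension, surjectivity, injectivity.} Using the dimension formula $\dim M(\xi)=r^{2}(2\Delta-1)+1$ of Theorem~\ref{stableThm} and noting $\Delta>\delta(\mu)\ge\tfrac12$, one compares $\dim M(\xi)$ with the dimensions of the strata of $S$-equivalence classes of split semistable sheaves (images of products $M(\xi_1)\times M(\xi_2)$ with $\xi_1+\xi_2=\xi$ and each $\xi_i$ proportional to $\xi$) to conclude that, for $\Delta>\delta(\mu)$, the locus $M(\xi)\sm M^{s}(\xi)$ has codimension $\ge 2$; the same holds for $R\sm R^{s}$ inside the smooth variety $R$. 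Therefore $\Pic(M(\xi))\cong\Pic(M^{s}(\xi))$ and $\Pic(R)\cong\Pic(R^{s})$, and pullback identifies $\Pic(M^{s}(\xi))$ with the $\mathrm{PGL}_N$-linearized, i.e.\ center-weight-zero, part of $\Pic(R)$. Since $R$ is rational and $\Pic(R)$ is generated by the determinant classes $\lambda(u)$, $u\in K(\PP^2)$, taking the weight-zero part shows precisely the $\lambda(u)$ with $u\in\xi^{\perp}$ descend, giving surjectivity of $\lambda$ onto $\Pic(M(\xi))$. Injectivity is a non-degeneracy statement: one exhibits enough complete curves in $M(\xi)$ (for instance Hecke curves arising from elementary modifications) so that the intersection numbers of $\lambda(u_1),\lambda(u_2)$ against them form a nonsingular matrix for a basis $u_1,u_2$ of $\xi^{\perp}$. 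Combining, $\lambda$ is an isomorphism, so $\Pic(M(\xi))$ is free of rank $2$ and $\Pic(M(\xi))\te\R\cong\xi^{\perp}$. The delicate point is exactly this step: the codimension-$\ge2$ estimate (which needs a short case analysis according to whether $\xi$ is primitive, and for small values of $\Delta-\delta(\mu)$) together with the computation of the $\mathrm{PGL}_N$-equivariant part of $\Pic(R)$, i.e.\ ruling out extra line bundles on $M(\xi)$.

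\emph{Height zero case.} When $\Delta=\delta(\mu)$ the general stable sheaf $E$ admits a short, rigid resolution $0\to E_{-1}\te\C^{a}\to E_{0}\te\C^{b}\to E\to 0$ by exceptional bundles, so $M(\xi)$ is dominated by — and indeed birational or finite onto — a moduli space of Kronecker modules, a GIT quotient of an open subset of $\Hom(E_{-1}\te\C^{a},E_{0}\te\C^{b})$ by $\GL_a\times\GL_b$. Such a quotient has Picard rank $1$, its Picard group being generated by the single polarization used in its construction. Equivalently, the map $\lambda$ above acquires a one-dimensional kernel — the direction ``along the curve $\delta$'' — because the corresponding $K$-theoretic pushforward has rank $0$ and its determinant is trivialized by the rigidity of the exceptional resolution. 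Hence $\Pic(M(\xi))$ is infinite cyclic. This case requires genuinely different input, namely the explicit resolution of the general sheaf, and is logically prior to the Beilinson-resolution analysis used elsewhere in the paper.
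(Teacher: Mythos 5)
This theorem is not proved in the paper at all: it is quoted as a result of Dr\'ezet (cf.\ \cite{DLP}, \cite{Drezet}, and \cite[Chapter 18]{LePotierLectures}), so there is no in-paper argument to compare against. Judged on its own, your outline follows the standard Quot-scheme/determinant-line-bundle template, and the construction of $\lambda$, the descent via Kempf's lemma for $u\in\xi^\perp$, and the reduction to $M^s(\xi)$ using the codimension-$\ge 2$ estimate for the strictly semistable locus are all sound and are indeed how the map $\lambda_M$ of \S 2 is built. The injectivity step (intersecting with suitable complete families) is also essentially what Dr\'ezet does.

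The genuine gap is the surjectivity step. You assert that ``since $R$ is rational and $\Pic(R)$ is generated by the determinant classes $\lambda(u)$, $u\in K(\PP^2)$,'' the weight-zero part gives all of $\Pic(M(\xi))$. Rationality of $R$ gives no control whatsoever on generators of $\Pic(R)$, and the claim that $\Pic(R)$ (an open subset of a Quot scheme of quotients of $\OO_{\PP^2}(-m)^N$) is generated by determinant classes is not a formal fact --- it is essentially the whole content of the theorem, namely ruling out extra line bundles on the moduli space. Dr\'ezet's actual proof does not argue on the Quot scheme: he uses a Beilinson-type resolution of the general sheaf by a triad of exceptional bundles (as in \S\ref{sec-beilinson} of this paper) to exhibit a big open subset of $M(\xi)$ as a good quotient of an open subset, with complement of codimension $\ge 2$, of an affine space of morphisms by a group of the form $\bigl(\prod_i \GL_{m_i}\bigr)/\C^*$; the Picard group of such a quotient is the character group of the group, which has rank $2$ when the resolution involves three exceptional blocks and rank $1$ when it involves two. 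This is exactly where the dichotomy $\Delta>\delta(\mu)$ versus $\Delta=\delta(\mu)$ enters, and it is the ingredient your sketch is missing. Relatedly, in the height-zero case ``birational or finite onto'' a Kronecker moduli space does not transfer Picard groups; one needs the actual isomorphism $M(\xi)\cong Kr_N(a,b)$ (or the big-open-subset quotient description above) to conclude $\Pic(M(\xi))\cong\Z$.
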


In $M(\xi)$, linear equivalence and numerical equivalence coincide and the N\'{e}ron-Severi space $\NS(M(\xi)) = \Pic(M(\xi)) \otimes \R$. When the Picard rank of a projective variety is one, then the ample, effective and movable cones coincide and are equal to the half-space spanned by any effective divisor. Therefore, for the rest of the paper, we will always assume that $\xi$ is a Chern character satisfying $\Delta > \delta(\mu)$. In this case, the N\'{e}ron-Severi space is a two-dimensional vector space. Hence,  the cones  $\Mov(M(\xi))$ and $\Eff(M(\xi))$ are determined by specifying their two extremal rays. 

At times it will be necessary to consider the open subvariety $M^s(\xi)$ of $M(\xi)$ parameterizing stable sheaves. The space $M^s(\xi)$ is a coarse moduli space for flat families of stable sheaves with Chern character $\xi$. On the other hand, the identification of $S$-equivalence classes of sheaves prevents $M(\xi)$ from being a coarse moduli space unless $\xi$ is a primitive character.  When the Picard rank of $M(\xi)$ is at least $2$, the complement of $M^s(\xi)$ has codimension at least $2$ by \cite[Lemma 18.3.1]{LePotierLectures}.  Thus when studying questions about divisors, there is typically no harm in working on $M^s(\xi)$ instead of $M(\xi)$ as necessary.

Most of the effective divisors considered in this paper will be certain Brill-Noether divisors.  It will be important to understand the class of a Brill-Noether divisor in terms of the isomorphism $\xi^\perp \cong \NS(M(\xi))$.  We first recall the construction of this isomorphism more explicitly.  We summarize the discussion in \cite[Chapter 18]{LePotierLectures}.

Let $\F U/S$ be a flat family of semistable sheaves of Chern character $\xi$ on $\P^2$ over a smooth variety $S$, and let $p:S\times \P^2\to S$ and $q:S\times \P^2\to \P^2$ be the two projections.  A homomorphism $\lambda_{\F U}:K(\P^2)\to \Pic(S)$ is described as the composition
$$K(\P^2) \xrightarrow{q^*} K^0(S\times \P^2) \xrightarrow{\cdot[\F U]}K^0(S \times \P^2) \xrightarrow{-p_!}K^0(S)\xrightarrow{\det} \Pic S,$$ where $p_! = \sum (-1)^i R^i p_\ast.$ If $\F U$ is replaced by a family $\F U \te p^\ast L$ for some line bundle $L$ on $S$, then the moduli map $S\to M(\xi)$ does not change, but $$\lambda_{\F U \te p^*L}(\zeta) = \lambda_{\F U}(\zeta) \te L^{\te- (\xi,\zeta)}.$$  For an integral class $\zeta\in \xi^\perp$, there is a line bundle denoted by $\lambda_M(\zeta)$ on $M(\xi)$ with the property that if $\F U/S$ is a flat family of semistable sheaves with Chern character $\xi$ and $f:S\to M(\xi)$ is the moduli map, then $$\lambda_{\F U}(\zeta) = f^\ast \lambda_M(\zeta).$$ This induces a linear map $\lambda_M:\xi^\perp\to \NS(M(\xi))$ which is an isomorphism when $\Delta>\delta (\mu)$.  

\begin{warning}
We have chosen to normalize $\lambda_M$ by using $-p_!$ instead of $p_!$, as is considered in \cite{HuybrechtsLehn} and \cite{LePotierLectures}.  We will discuss the reason for this change below in Remark \ref{minusRemark}.
\end{warning}

When they exist, Brill-Noether divisors provide explicit sections of some of these line bundles.

\begin{proposition}\label{BNProp}
Let $\xi$ be a positive-rank Chern character such that $M(\xi)$ is of Picard rank $2$.  Suppose $V$ is a stable vector bundle of Chern character $\zeta$ such that the general $U\in M(\xi)$ is cohomologically orthogonal to $V$.  Endow the locus $$D_V =\overline{ \{ U \in M^s(\xi):H^1(U\te V)\neq 0\}}\subset M(\xi)$$ with the natural determinantal scheme structure.
\begin{enumerate}
\item $D_V$ is an effective divisor.
\item If $\mu(U\te V)>-3$, then $\OO_{M(\xi)}(D_V) \cong  \lambda_M(\zeta).$

\item If $\mu(U\te V)<0$, then $\OO_{M(\xi)}(D_V)\cong \lambda_M(\zeta)^* = \lambda_M(-\zeta).$
\end{enumerate}
\end{proposition}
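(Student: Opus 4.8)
The plan is to establish the three parts in order, using the explicit description of $\lambda_{\F U}$ and a universal-family computation. First I would argue that $D_V$ is either all of $M(\xi)$ or a divisor: the locus is defined as the degeneracy locus of a map of vector bundles on (an open subset of) $M^s(\xi)$, constructed as follows. Since the general $U$ is cohomologically orthogonal to $V$, in particular $\chi(U\te V)=0$, so for a flat family $\F U/S$ of semistable sheaves the complex $Rp_\ast(\F U\te q^\ast V)$ is (locally on $S$) quasi-isomorphic to a two-term complex $[A_0\fto{\phi} A_1]$ of vector bundles of the same rank. The locus where $H^1\neq 0$ is exactly where $\phi$ drops rank, i.e. the vanishing of $\det\phi$; since by hypothesis this is not identically zero on $M^s(\xi)$, $D_V$ is an effective Cartier divisor there, and we take its closure in $M(\xi)$ (using that the complement of $M^s(\xi)$ has codimension $\geq 2$, so this does not affect the divisor class). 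This gives (1).

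For (2) and (3), the key is to identify $\OO(D_V)$ with a determinant line bundle. Working with a universal family $\F U/S$ where $S\to M(\xi)$ is the moduli map (or, if no global universal family exists, working étale-locally or with a family over a smooth variety dominating $M(\xi)$ and descending), the divisor $D_V$ pulls back to the vanishing of $\det\phi:\det A_0\to \det A_1$, so $\OO_S(D_V)=\det A_1\te(\det A_0)^\ast = \det\bigl(Rp_\ast(\F U\te q^\ast V)\bigr)^{-1} = \det\bigl(-p_!(\F U\te q^\ast V)\bigr)$ in the notation of the excerpt. Now I compare this with $\lambda_{\F U}(\zeta)$. By definition $\lambda_{\F U}(\zeta)=\det\bigl(-p_!(\F U\cdot q^\ast\zeta)\bigr)$ where $\zeta=[V]$ in $K(\P^2)$. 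When $\mu(U\te V)>-3$ we have, by cohomological orthogonality and Serre duality, $h^2(U\te V)=h^0(U^\ast\te V^\ast\te K_{\P^2})=0$ because the relevant slope is too negative; combined with $h^0=0$ this forces $R^0p_\ast=R^2p_\ast=0$ generically, so $-p_!(\F U\te q^\ast V)$ is represented by the single sheaf $R^1p_\ast(\F U\te q^\ast V)$ placed in degree that makes the sign work out, and the $K$-theory class $q^\ast[V]$ can be replaced by $q^\ast V$ itself. Hence $\OO(D_V)\cong\lambda_M(\zeta)$ after descent. Part (3) is the Serre-dual statement: when $\mu(U\te V)<0$, one instead has $h^0(U\te V)=0$ automatically and the surviving cohomology is $H^2$; running the same degeneracy-locus argument with the roles of $A_0,A_1$ swapped (equivalently, applying (2) to the dual family and using $\lambda_M(-\zeta)=\lambda_M(\zeta)^\ast$, together with $(\F U\te q^\ast V)$ versus its dual) yields $\OO(D_V)\cong\lambda_M(\zeta)^\ast$. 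Note the normalization warning about using $-p_!$ rather than $p_!$ is exactly what makes the signs in (2) and (3) come out as stated.

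The main obstacle is the bookkeeping around the universal family and the determinant-of-cohomology formalism: (a) justifying that a two-term complex of vector bundles representing $Rp_\ast$ exists at least locally (standard, via a resolution or the base-change/Mumford-style argument), (b) checking the degeneracy locus is reduced / has the expected codimension and that its class is genuinely $[\det\phi]=0$, so $\OO(D_V)$ equals the determinant line and not some twist, and (c) handling the possible nonexistence of a global Poincaré sheaf on $M(\xi)$ when $\xi$ is imprimitive — here one uses that $\lambda_M(\zeta)$ is defined precisely so as to be independent of the choice of local family (the twisting formula $\lambda_{\F U\te p^\ast L}(\zeta)=\lambda_{\F U}(\zeta)\te L^{-(\xi,\zeta)}$ shows the ambiguity disappears on $\zeta\in\xi^\perp$), and the same twisting-invariance applies to $\OO(D_V)$ since $D_V$ is intrinsic to the moduli point. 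Once these foundational points are in place, parts (1)–(3) follow by the determinant computation above together with the vanishing $h^0=0$ or $h^2=0$ dictated by the sign of $\mu(U\te V)$.
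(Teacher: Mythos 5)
Your proposal is correct and follows essentially the same route as the paper: realize $Rp_*(\F U\te q^*V)$ locally as an equal-rank two-term complex of bundles (possible because $\chi=0$ and because stability plus the slope hypothesis kills $H^2$ resp.\ $H^0$ fiberwise), take $D_V$ to be the vanishing of $\det\phi$ with its Fitting-ideal structure, and identify $\OO(D_V)$ with $\det(-p_!(\F U\te q^*V))^{\pm 1}=\lambda_{\F U}(\pm\zeta)$ according to whether the surviving cohomology sits in odd or even degree. The only cosmetic difference is that the paper produces the two-term complex explicitly by twisting by $\OO(d)$ and restricting to a general curve $C$ of large degree, whereas you invoke the abstract local representative of $Rp_*$; for that step you should note that $R^0p_*$ vanishes identically (not just generically) by torsion-freeness and $R^2p_*$ by fiberwise vanishing, so the determinant really is $\det R^1p_*$.
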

\begin{proof}
We first recall how to give $D_V$ a determinantal scheme structure; the fact that it is a divisor will also follow.  Let $\F U/S$ be a complete flat family of stable sheaves with Chern character $\xi$. We will describe a determinantal scheme structure on the locus $D_{V,S}$ of $s\in S$ such that $U_s \te V\in D_V$.  If $M^s(\xi)$ admits a universal family, this gives the correct scheme structure on $D_V$. Otherwise, one can work with the universal family on the stack $\F M^s(\xi)$ and then take the image in $M^s(\xi)$ (See \cite[Theorem 4.16 and Example 8.7]{Alper}).  

Suppose $\mu(U\te V)>-3$.  Let $d\gg 0$, and fix some $s\in S$. We first describe $D_{V,S}$ locally near $s$.   Let $C$ be a general curve of degree $d$, so that $C$ avoids any singularities of $U_s$.  Replacing $S$ by a smaller open subset $S'$, we may assume $C$ avoids the singularities of $U_s$ for all $s\in S'$.  Consider the exact sequence $$0\to \F U \te q^*V \to \F U \te q^*(V(d)) \to \F U \te q^*(V(d)|_C)\to 0$$ of sheaves on $S'\times \P^2$.  For a fixed $s\in S'$ we have a sequence $$H^1(U_s \te V(d))\to H^1(U_s\te V(d)|_C))\to H^2(U_s \te V).$$ The first cohomology group here vanishes since $d\gg 0$ and the moduli space of semistable sheaves $M(\xi \te \zeta)$ is bounded.  The last group vanishes by stability since $\mu(U\te V)>-3$.  Thus the middle group vanishes, and we conclude $R^1p_*(\F U\te q^*(V(d)|_C))=0$.  We also have $p_*(\F U\te q^*V)=0$ since the general $U_s$ is cohomologically orthogonal to $V$. We then obtain an exact sequence  $$0\to  p_*(\F U \te q^*(V(d))) \xrightarrow{\phi} p_*(\F U \te q^*(V(d)|_C)) \to R^1p_* (\F U\te q^*V)\to 0$$ on $S'$.  The sheaves $A=p_*(\F U\te q^*(V(d)))$ and $B=p_*(\F U\te q^*(V(d)|_C))$ are vector bundles of the same rank since $d$ is sufficiently large and $(\xi,\zeta)=0$.   The divisor $D_{V,S'}$ is the zero locus of the section $\det \phi$ of the line bundle $\sHom(\det A,\det B) \cong \det R^1p_*(\F U\te q^*V)$, so it has a scheme structure.

In fact, the scheme structure on $D_{V,S'}$ described locally in the previous paragraph is independent of the choice of $d$ and $C$, and the construction globalizes to give a scheme structure on the divisor $D_{V,S}$.  Precisely, over an open set $S'$ which is sufficiently small that $A$ and $B$ are isomorphic trivial bundles, the Fitting ideal sheaf $\Fitt_0(R^1p_*(\F U \te q^*V))\subset \OO_S$ is generated by $\det \phi$, viewed as a section of $\OO_{S'}$ via the trivializations of $A$ and $B$ (See \cite[Chapter 20]{Eisenbud2}).  This Fitting ideal is therefore the ideal sheaf of $D_{V,S}$, and the scheme structure on $D_{V,S}$ is determined. 

We can further identify $$\Fitt_0(R^1p_*(\F U\te q^*V))^*\cong \det R^1 p_*(\F U\te q^*V).$$ Indeed, on an open set $S'$ where $A$ and $B$ are locally trivial, there is an induced trivialization of the line bundle $\det R^1 p_*(\F U\te q^*V)$ and a local section $\det \phi$, viewed as a section of $\OO_{S'}$.  The collection of all such local sections forms an effective Cartier divisor by the existence of the Fitting ideal.  This induces a global section of $\det R^1p_*(\F U\te q^*V)$ that yields an isomorphism $\det R^1 p_*(\F U\te q^*V)\cong \OO_S(D_{V,S})$.

Finally,
using $V$ to compute $\lambda_{\F U}(\xi')$, we find $$\lambda_{\F U}(\zeta) = \det (-p_![\F U \te q^*V])=\det ([R^1p_*(\F U \te q^*V)]) = \OO_S(D_{V,S}),$$ with $R^2p_* (\F U\te q^*V)$ vanishing by stability and the slope assumption. We conclude the corresponding result holds for the moduli space as well.

When $\mu(U\te V) <0$, we instead consider an exact sequence $$0 \to \F U \te q^*(V(-d)) \to \F U\te q^*V \to \F U\te q^*V|_C\to 0$$ for $d\gg 0$ and perform a similar calculation.  We find $\OO_S(D_{V,S})\cong \det R^2p_* (\F U\te q^*V)$ and compute $\lambda_{\F U}(\zeta)$ as before.
\end{proof}

\begin{remark}
In the proposition, we described the divisor $D_V$ as the locus of all $U$ where $H^1(U\te V)$ jumps in rank.  As $\chi(U\te V)=0$, if $\mu(U\te V)\geq 0$ then this locus  can instead be described as the locus where $H^0(U\te V)$ jumps rank.  Similarly, if $\mu(U\te V)\leq -3$, it can be described as the locus where $H^2(U\te V)$ jumps rank.  These descriptions are perhaps preferable, in light of the fact that the divisor class of $D_V$ exhibits qualitatively different behavior in each case despite the uniform description in terms of $H^1$.  

It is also worth pointing out that $U$ and $V$ are never cohomologically orthogonal if $-3<\mu(U\te V)<0$.  For if they are, then $\lambda_M(\zeta)=\lambda_M(-\zeta)$.  As $\lambda_M$ is an isomorphism, this is impossible.  The hypothesis that the Picard rank of $M(\xi)$ is $2$ is crucial here.  If $\xi$ is a stable Chern character such that $\chi(\xi)=0$ and $-3<\mu(\xi)<0$, then any $U\in M(\xi)$ is cohomologically orthogonal to $\OO_{\P^2}$.  It follows that $M(\xi)$ has Picard rank at most $1$.  (This is also clear from Theorem \ref{PicThm}.)
\end{remark}

\subsection{Natural line bundles on the moduli space}\label{ssec-primary}
Following Huybrechts and Lehn \cite{HuybrechtsLehn}, we define two Chern characters $\zeta_0,\zeta_1$ depending on the character $\xi$ of the moduli space by the formula $$\zeta_i = r(\xi) h^i -(\xi,h^i) h^2,$$ where $h$ is the Chern character of the structure sheaf $\OO_H$ of a line $H\subset \P^2$.  (These differ from the definition in \cite{HuybrechtsLehn} by a sign due to our convention for $\lambda_M$.) Since $(\xi,h^2) = r(\xi)$, we see that $\zeta_i\in \xi^\perp$.  More explicitly, we compute the Chern characters $(\ch_0,\ch_1,\ch_2)$ of $\zeta_i$ as 
\begin{align*}
\zeta_0 &= (r(\xi),0,-\chi(\xi))\\
\zeta_1 &= (0,r(\xi),-\frac{3}{2}r(\xi)-c_1(\xi))
\end{align*}
Line bundles $\F L_i$ are then defined by $\F L_i = \lambda_M(\zeta_i)$.

The line bundle $\F L_1$ gives the morphism $M(\xi)\to M^{DUY}(\xi)$ to the Donaldson-Uhlenbeck-Yau compactification by slope-semistable sheaves.  It spans an edge of the nef cone of $M(\xi)$.  For $n\gg 0$, the line bundle $\F L_0 \te \F L_1^n$ is ample \cite{HuybrechtsLehn}.

We view the Chern character $\zeta_1$ corresponding to $\F L_1$ as spanning a line in $\xi^\perp$ which splits this plane into two open halves, depending on the rank of a Chern character.  We call the open half-plane of characters with positive rank the \emph{primary} half, and the open half-plane of characters with negative rank the \emph{secondary} half.  We also split $\NS(M(\xi))$ into primary and secondary halves according to the isomorphism $\NS( M(\xi))\cong \xi^\perp$.  The ample cone of $M(\xi)$ lies in the primary half since $\zeta_0$ lies in the primary half.  

Given a closed full-dimensional  cone in $\NS(M(\xi))$ which contains $\F L_1$ and is not equal to either half-plane, at least one of its edges lies in one of the half-planes; we call the edge \emph{primary} or \emph{secondary} accordingly.  The other edge (which possibly coincides with the ray spanned by $\F L_1$) is given the opposite name, so that the cone has a single primary and a single secondary edge.  When the edge of a cone is given by a Brill-Noether divisor, it is simple to determine whether the edge is primary or secondary.
The next result follows immediately from Proposition \ref{BNProp}.

\begin{corollary}\label{primaryCor}
Keep the notation and hypotheses from Proposition \ref{BNProp}.
\begin{enumerate}
\item If $\mu(U\te V)>-3$, then $[D_V]$ lies in the primary half of $\NS(M(\xi))$.

\item If $\mu(U\te V)<0$, then $[D_V]$ lies in secondary half of $\NS(M(\xi))$.
\end{enumerate}
\end{corollary}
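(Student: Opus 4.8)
The plan is to identify the class $[D_V]$ explicitly and then simply read off which half-plane it occupies. By construction, the isomorphism $\lambda_M \colon \xi^\perp \xrightarrow{\sim} \NS(M(\xi))$ carries the open half-plane of characters in $\xi^\perp$ of \emph{positive} rank onto the primary half of $\NS(M(\xi))$, and the half-plane of characters of \emph{negative} rank onto the secondary half. So the whole content of the corollary is bookkeeping: determine the sign of the rank of the Chern character whose image under $\lambda_M$ is $\OO_{M(\xi)}(D_V)$.

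First I would observe that since $V$ is a stable \emph{vector bundle}, its Chern character $\zeta$ satisfies $r(\zeta) = \rk V > 0$; hence $\zeta$ lies in the primary half of $\xi^\perp$ and, correspondingly, $-\zeta = \lambda_M^{-1}(\lambda_M(\zeta)^*)$ lies in the secondary half. (Here $\zeta \in \xi^\perp$ because cohomological orthogonality of the general $U$ to $V$ forces $\chi(U\te V)=0$, hence $(\xi,\zeta)=0$, so that $\lambda_M(\pm\zeta)$ is defined in the first place.) Then for part (1), assuming $\mu(U\te V) > -3$, Proposition \ref{BNProp}(2) gives $\OO_{M(\xi)}(D_V) \cong \lambda_M(\zeta)$, so $[D_V]$ lies in the primary half. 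For part (2), assuming $\mu(U\te V) < 0$, Proposition \ref{BNProp}(3) gives $\OO_{M(\xi)}(D_V)\cong \lambda_M(-\zeta)$, and since $-\zeta$ has negative rank, $[D_V]$ lies in the secondary half.

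I do not expect any genuine obstacle: once Proposition \ref{BNProp} is established the corollary is immediate, and the only points deserving even momentary care are keeping straight the sign convention for $\lambda_M$ (the paper's normalization via $-p_!$) and using that a vector bundle has strictly positive rank, so that $\zeta$ and $-\zeta$ really lie in the two open half-planes rather than on the dividing line spanned by $\zeta_1$.
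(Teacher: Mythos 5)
Your proposal is correct and is exactly the argument the paper intends: the paper states that the corollary ``follows immediately from Proposition \ref{BNProp},'' and your bookkeeping---$\zeta=\ch(V)$ has positive rank since $V$ is a vector bundle, so $\lambda_M(\zeta)$ is primary and $\lambda_M(-\zeta)$ is secondary---is precisely that immediate deduction.
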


\begin{remark}\label{minusRemark}
Without our sign convention on $\lambda_M$, the $h^0$-Brill-Noether divisor associated to a bundle $V$ of character $\zeta$ with $\mu(U\te V)>-3$ corresponds to the Chern character $-\zeta$ of negative rank.  The primary half of $\xi^\perp$ must be taken to consist of the negative rank characters, and the ample cone of $M(\xi)$ corresponds to certain characters of negative rank.  Furthermore, the classes $\zeta_0$ and $\zeta_1$ would have negative rank and negative $\ch_1$, respectively.  As we will be thinking of $\NS(M(\xi))$ primarily in terms of $h^0$-Brill-Noether divisors, it is advantageous to renormalize $\lambda_M$ and alleviate all these sign issues.
\end{remark}

\section{The corresponding exceptional bundle}\label{sec-exceptional}

For the rest of the paper, $\xi$ will denote a Chern character of positive rank such that the moduli space $M(\xi)$ of semistable sheaves with Chern character $\xi$ is of Picard rank $2$.  We will only lift this restriction in \S \ref{ssec-rank0}, where we discuss the case of torsion sheaves.

In this section, we introduce an exceptional bundle which controls the effective  cone of divisors on $M(\xi)$. We also discuss how the effective cone is determined by this bundle.  We focus on describing the primary edge of the effective cone, as defined in Section \ref{ssec-primary}.  When the rank is at least $3$, the secondary edge of the cone will be easily determined by a duality argument. For smaller ranks, the secondary edge exhibits special behavior.

Recall that $\xi^\perp$ is a linear space which is naturally identified with $\NS(M(\xi))$.  The rank of an orthogonal class $\zeta\in \xi^\perp$ is relatively unimportant, so it is convenient to work with slopes and discriminants instead of Chern characters. Hence, we  introduce the parabola $$Q_\xi = \{(\mu,\Delta): (1,\mu,\Delta) \textrm{ lies in } \xi^\perp\} \subset \R^2$$ as a subset of the $(\mu,\Delta)$-plane.

The Riemann-Roch formula shows that $Q_\xi$ is the concave up parabola with equation $$Q_\xi:P(\mu(\xi)+\mu)-\Delta(\xi)=\Delta.$$ where $P(x) = \frac{1}{2}(x^2+3x+2)$ is the Hilbert polynomial of $\OO_{\P^2}$.  Consider the \emph{reference parabola} $Q_{\xi_0}$ corresponding to the Chern character $\xi_0$ of $\OO_{\P^2}$.  It is defined by the equation $$Q_{\xi_0}: P(\mu)=\Delta,$$ so $Q_\xi$ is obtained from $Q_{\xi_0}$ by translation by the vector $(-\mu(\xi),-\Delta(\xi)).$ The hypothesis that $M(\xi)$ is positive-dimensional gives $\Delta(\xi)>\frac{1}{2}$.  The minimum value of $P(x)$ occurs at $P(-\frac{3}{2})=-\frac{1}{8}$, so $Q_\xi$ passes below the line $\Delta=0$.

Observe that every parabolic arc in the curve $\delta(\mu)=\Delta$ is a portion of some parabola $Q_{\xi_\alpha}$, where $\xi_\alpha$ denotes the Chern character of an exceptional slope $\alpha\in \F E$.  If $\xi$, $\zeta$ are any two linearly independent (rational) Chern characters of positive rank, then the intersection $Q_\xi \cap Q_{\zeta}$ consists of a single point $\{(\mu,\Delta)\}$ with rational coordinates, corresponding to the intersection of planes $\xi^\perp \cap {\zeta}^\perp$.

\begin{theorem}\label{existenceOfSlope}
The parabola $Q_\xi$ intersects the line $\Delta = \frac{1}{2}$ at two points.  If $\mu_{0}\in \R$ is the larger of the two slopes such that $(\mu_{0},\frac{1}{2})\in Q_\xi$, then there is a (unique) exceptional slope $\alpha\in \F E$ such that $\mu_{0}\in I_\alpha$.
\end{theorem}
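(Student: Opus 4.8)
\textit{Proof strategy.} The plan is to treat the two assertions separately. The first is elementary; the second reduces, after a rationality observation, to the Diophantine results proved in \S\ref{sec-transcendental}.

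For the first assertion, substitute $\Delta=\tfrac{1}{2}$ into the defining equation $Q_\xi\colon \Delta=P(\mu(\xi)+\mu)-\Delta(\xi)$ to get the quadratic equation $P(\mu(\xi)+\mu)=\tfrac{1}{2}+\Delta(\xi)$ in $\mu$. Since $M(\xi)$ is positive-dimensional we have $\Delta(\xi)>\tfrac{1}{2}$, so the right-hand side is at least $1$, hence strictly above the minimum value $P(-\tfrac{3}{2})=-\tfrac{1}{8}$ of the upward parabola $P$. Therefore the equation has two distinct real roots, i.e.\ $Q_\xi$ meets the line $\Delta=\tfrac{1}{2}$ in exactly two points. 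Let $\mu_0$ be the larger of the two roots. Because $\xi$ is the Chern character of a sheaf, $r(\xi)\in\ZZ_{>0}$, $c_1(\xi)\in\ZZ$ and $\chi(\xi)\in\ZZ$, so $\mu(\xi)$ and $\Delta(\xi)$ are rational and the quadratic above has rational coefficients; hence $\mu_0$ is either rational or a quadratic irrational.

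For the second assertion it is enough to show $\mu_0\notin C$: then $\mu_0$ lies in some interval $I_\alpha$, and $\alpha$ is unique because the intervals $I_\alpha$ are the connected components of $\R\setminus C$ and hence pairwise disjoint. If $\mu_0$ is rational, then $\mu_0\in I_\alpha$ for some $\alpha$ by Dr\'ezet's theorem (quoted in \S\ref{sec-prelim}), which states that $C$ contains no rational numbers. If $\mu_0$ is a quadratic irrational, suppose toward a contradiction that $\mu_0\in C$. By Theorem~\ref{cantorTranscendThm}, every point of $C$ that is not an endpoint of some $I_\alpha$ is transcendental; since $\mu_0$ is algebraic, it must be an endpoint $\alpha\pm x_\alpha$ of some $I_\alpha$. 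But $Q_\xi$ cannot pass through such an endpoint --- this is the second input supplied by \S\ref{sec-transcendental} --- and this contradiction completes the argument.

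The substantive difficulty here is entirely deferred to \S\ref{sec-transcendental}: proving that the relevant points of the Cantor set $C$ are transcendental, and that $Q_\xi$ avoids the endpoints $\alpha\pm x_\alpha$, requires analyzing the continued fraction expansions of elements of $C$ together with Roth's Theorem and the Schmidt Subspace Theorem. Granting those facts, the only work in the present theorem is the observation that the invariants of $\xi$ are rational, so that the two intersection points of $Q_\xi$ with $\Delta=\tfrac{1}{2}$ are algebraic of degree at most two --- precisely the hypothesis under which the transcendence statement can be invoked.
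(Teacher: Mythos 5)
Your overall structure matches the paper's: two real intersection points because $\Delta(\xi)>\tfrac{1}{2}$, the observation that $\mu_0$ is rational or a quadratic irrational, Dr\'ezet's theorem for the rational case, and the transcendence theorem to reduce the quadratic-irrational case to the endpoint case. But there is one genuine gap: you assert that ``$Q_\xi$ cannot pass through such an endpoint'' and defer this to \S 4, claiming it is a second Diophantine input proved there via continued fractions, Roth, and Schmidt. That is not where (or how) this fact is established. Theorem \ref{cantorTranscendThm} says only that non-endpoints of $C$ are transcendental and that endpoints are quadratic irrationals --- so it is entirely consistent with that theorem for the quadratic irrational $\mu_0$ to be an endpoint $\alpha\pm x_\alpha$, and nothing in \S 4 rules this out. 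The endpoint exclusion must be argued in the present proof, and your write-up leaves it unproved.

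The missing step is short and purely algebraic, not Diophantine. The point $(\alpha+x_\alpha,\tfrac{1}{2})$ is an endpoint of one of the parabolic arcs making up the curve $\delta(\mu)=\Delta$, hence lies on the parabola $Q_{\xi_\beta}$ for some exceptional slope $\beta$. The parabolas $Q_\xi$ and $Q_{\xi_\beta}$ are distinct (e.g.\ $\Delta(\xi)>\tfrac12>\Delta_\beta$, so $\xi$ and $\xi_\beta$ are not proportional) translates of the same reference parabola $Q_{\xi_0}$, so they meet in exactly one point, namely the point corresponding to the intersection of the rational planes $\xi^\perp\cap\xi_\beta^\perp$ in $K(\P^2)\te\Q$; in particular that point has rational coordinates. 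If $\mu_0=\alpha+x_\alpha$, then $(\mu_0,\tfrac12)$ is this intersection point, forcing $\mu_0\in\Q$ and contradicting the assumption that $\mu_0$ is a quadratic irrational. With this paragraph inserted, your proof is complete and coincides with the paper's; your closing remark that the endpoint avoidance ``requires \dots Roth's Theorem and the Schmidt Subspace Theorem'' should be deleted, since only the transcendence of non-endpoints relies on that machinery.
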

\begin{proof}
We sketch the proof, deferring the most difficult part to the next section.  We first explicitly calculate $$\mu_{0} = \frac{ - 3 - 2\mu(\xi)+\sqrt{5+8\Delta(\xi)}}{2}$$ from the formula for $Q_\xi$.  Then either $\mu_{0}$ is rational (in case $5+8\Delta(\xi)\in \Q$ is a square) or it is a quadratic irrational.  If $\mu_{0}$ is rational, then $\mu_{0}\in I_\alpha$ for some $\alpha$ by \cite[Theorem 1]{Drezet}.

Suppose $\mu_{0}$ is a quadratic irrational, and that $\mu_{0}$ is not in any $I_\alpha$, so that it lies in the generalized Cantor set $C = \R \sm \bigcup_\alpha I_\alpha$.  We show in the next section that any $x\in C$ which is \emph{not} an endpoint of some interval $I_\alpha$ is  transcendental.  Hence it suffices to show that $\mu_0$ cannot be an endpoint. Without loss of generality, assume that $\mu_0 = \alpha + x_\alpha$ for some $\alpha\in \F E$.  The point $(\alpha+ x_\alpha,\frac{1}{2})$ is the endpoint of one of the parabolic arcs in the curve $\delta(\mu) = \Delta$, so it lies on a parabola $Q_{\xi_\beta}$ for some exceptional slope $\beta$.   Then $Q_{\xi} \cap Q_{\xi_\beta} = \{(\mu_{0},\frac{1}{2})\}$ is a single point with rational coordinates.  Therefore, $\mu_0$ is rational, a contradiction.
\end{proof}

\begin{definition}\label{def-exceptional-slope}
The exceptional slope $\alpha$ in Theorem \ref{existenceOfSlope} is the \emph{(primary) corresponding exceptional slope} to $\xi$.
\end{definition}

It is now easy to describe the ray spanning the primary edge of the effective cone.   The behavior of the effective  cone depends on the sign of the pairing $(\xi,\xi_\alpha) = \chi(\xi \te \xi_{\alpha})$.  We keep the identification $\NS(M(\xi))\cong \xi^\perp$ in mind, and recall that the primary half of this space corresponds to Chern characters of positive rank.

\begin{enumerate}
\item If $(\xi,\xi_\alpha)>0$, the primary edge of the effective cone is spanned by a positive rank Chern character in $\xi^\perp \cap (\xi_{-\alpha})^\perp$.

\item If $(\xi,\xi_\alpha)=0$, the primary edge of the effective cone is spanned by the orthogonal Chern character $\xi_\alpha$.  
\item If $(\xi,\xi_\alpha)<0$, the primary edge of the effective cone is spanned by a positive rank Chern character in  $\xi^\perp \cap (\xi_{-\alpha-3})^\perp$.
\end{enumerate}

Note that the previous intersections are also easily computed in the $(\mu,\Delta)$-plane as intersections of parabolas. For example, in the first case, the primary edge of the effective cone corresponds to the point $Q_\xi \cap Q_{\xi_{-\alpha}}$.

\begin{definition}
The \emph{(primary) corresponding orthogonal invariants} $(\mu^+,\Delta^+)$ to $\xi$ are defined by 
\begin{enumerate}
\item $\{(\mu^+,\Delta^+)\} = Q_\xi \cap Q_{\xi_{-\alpha}}$ if $(\xi,\xi_\alpha)>0$,
\item $(\mu^+,\Delta^+)=(\alpha,\Delta_\alpha)$ if $(\xi,\xi_\alpha)=0$, and 
\item $\{(\mu^+,\Delta^+)\} = Q_\xi \cap Q_{\xi_{-\alpha-3}}$ if $(\xi,\xi_\alpha)< 0$.
\end{enumerate}
A \emph{(primary) orthogonal Chern character $\xi^+$ to $\xi$} is defined to be any character $\xi^+=(r^+,\mu^+,\Delta^+)$ where $r^+$ is sufficiently large and divisible.
\end{definition}

We will usually drop the parenthesized word primary when discussing the previous constructions, as we will only briefly discuss the secondary edge of the effective cone in \S\ref{ssec-secondary}. With this notation, we summarize the preceding discussion with the most concise statement of the theorem we will eventually prove.

\begin{theorem}
In terms of the isomorphism $\NS(M(\xi))\cong \xi^\perp$, the primary edge of the effective cone of $M(\xi)$ is spanned by any primary orthogonal character $\xi^+$.  
\end{theorem}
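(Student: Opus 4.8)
The argument is spread over Sections \ref{sec-beilinson}--\ref{sec-effective}; we outline it. By Proposition \ref{BNProp} and Corollary \ref{primaryCor}, if $V$ is a stable bundle of Chern character $\xi^+$ (the integer $r^+$ being chosen large and divisible) and the general $U\in M(\xi)$ is cohomologically orthogonal to $V$, then $D_V$ is an effective divisor with $\OO_{M(\xi)}(D_V)\cong\lambda_M(\xi^+)$, and this class lies in the primary half of $\NS(M(\xi))$ because $\mu(U\te V)=\mu(\xi)+\mu^+>-3$ by the construction of $(\mu^+,\Delta^+)$. Hence the theorem reduces to two points: \emph{(a)} the general $U\in M(\xi)$ is cohomologically orthogonal to the general $V\in M(\xi^+)$; and \emph{(b)} the class $\lambda_M(\xi^+)$ is not merely effective, but spans an extremal ray of $\Eff(M(\xi))$. (When $(\xi,\xi_\alpha)=0$ one takes $V=E_\alpha$ directly, with $\lambda_M(\xi_\alpha)=\lambda_M(\xi^+)$.)

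For \emph{(a)} the plan is to resolve the general $U$ by a short complex of direct sums of exceptional bundles drawn from a triad determined by the corresponding exceptional slope $\alpha$ of Definition \ref{def-exceptional-slope} together with the position of $\xi_\alpha$ relative to $Q_\xi$, which distinguishes the three cases. That $\alpha$ is well defined is exactly Theorem \ref{existenceOfSlope}, whose proof rests on the results of Section \ref{sec-transcendental}. One then shows that for the general $U$ the Beilinson spectral sequence attached to this triad degenerates, leaving only a short resolution, and that the exceptional bundles occurring in it are cohomologically orthogonal to a general bundle of invariants $(\mu^+,\Delta^+)$; this is precisely why $(\mu^+,\Delta^+)$ was defined to be $Q_\xi\cap Q_{\xi_{-\alpha}}$, resp. $Q_\xi\cap Q_{\xi_{-\alpha-3}}$, resp. $(\alpha,\Delta_\alpha)$. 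Feeding this resolution into the computation of $R^\bullet p_\ast(\F U\te q^\ast V)$ then gives $h^i(U\te V)=0$ for all $i$.

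For \emph{(b)} the chosen differential in the resolution of $U$ is a Kronecker module over the relevant $\Hom$-space of the triad, so letting $U$ vary over a dense open subset of $M(\xi)$ defines a rational map $\pi\colon M(\xi)\dashrightarrow M_{\mathrm{Kr}}$ to a moduli space of semistable Kronecker modules, a projective variety of Picard rank one, and a Chern-class computation with a universal resolution identifies $\pi^\ast\OO_{M_{\mathrm{Kr}}}(1)$ with a positive multiple of $\lambda_M(\xi^+)$ when $(\xi,\xi_\alpha)\neq 0$. A numerical comparison --- in the end an identity among the ranks and discriminants of the exceptional bundles involved --- shows that $\dim M_{\mathrm{Kr}}<\dim M(\xi)=r(\xi)^2(2\Delta(\xi)-1)+1$ in these cases, so $\pi$ has positive-dimensional general fibre and $\lambda_M(\xi^+)$ is represented by an effective divisor which is not big. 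Since $M(\xi)$ is a Mori dream space, its effective cone is closed, hence equals the pseudoeffective cone, whose interior is the big cone; therefore this non-big effective class lies on $\partial\Eff(M(\xi))$, which is the union of the two extremal rays, and as $\lambda_M(\xi^+)$ lies in the primary half it spans the primary edge. When $(\xi,\xi_\alpha)=0$, the same comparison gives $\dim M_{\mathrm{Kr}}=\dim M(\xi)$, so $\pi$ is birational; it contracts the prime divisor $D_{E_\alpha}$, of class $\lambda_M(\xi_\alpha)=\lambda_M(\xi^+)$, onto the discriminant locus of $M_{\mathrm{Kr}}$. A prime divisor contracted in this way is rigid, hence spans an extremal ray of $\Eff(M(\xi))$, and once more it is the primary edge.

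The principal obstacle is the construction underlying \emph{(a)}: establishing that for the \emph{general} $U\in M(\xi)$ the Beilinson spectral sequence attached to the chosen triad really degenerates. This uses the Dr\'ezet--Le Potier classification (Theorem \ref{stableThm}) to control the cohomology of a general stable sheaf, but it depends essentially on the Diophantine input of Section \ref{sec-transcendental}, which guarantees both that $Q_\xi$ meets the line $\Delta=\frac{1}{2}$ inside some interval $I_\alpha$ --- never at a point of the Cantor set $C$ --- and that the resulting triad is the correct one. The other delicate point is the dimension count in \emph{(b)}, comparing $\dim M_{\mathrm{Kr}}$ with $\dim M(\xi)$, which is again a matter of the arithmetic of exceptional slopes.
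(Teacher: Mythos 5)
Your overall architecture matches the paper's: Brill--Noether divisors via Proposition \ref{BNProp} and Corollary \ref{primaryCor}, a resolution of the general $U$ from a Beilinson spectral sequence for a triad chosen using the corresponding exceptional slope of Theorem \ref{existenceOfSlope}, and extremality via the fibration to a Picard-rank-one Kronecker moduli space. However, there is a genuine gap at the heart of your step \emph{(a)}. You assert that ``the exceptional bundles occurring in [the resolution of $U$] are cohomologically orthogonal to a general bundle of invariants $(\mu^+,\Delta^+)$'' and then deduce $h^i(U\te V)=0$ by feeding the resolution into $R^\bullet p_\ast(\F U\te q^\ast V)$. This premise is false for two of the three bundles: only $E_{-(\alpha.\beta)}\te V$ has vanishing cohomology, while $E_{-\alpha-3}\te V$ and $E_{-\beta}\te V$ have nonzero Euler characteristic in general (in the paper's closing example, where the resolution involves $\OO_{\P^2}(-2)$ and $\OO_{\P^2}(-1)$ and $(\mu^+,\Delta^+)=(1,3)$, one has $\chi(V(-2))=-3r^+\neq 0$ and $\chi(V(-1))=-2r^+\neq 0$). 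So the vanishing of $h^i(U\te V)$ cannot be obtained termwise; it arises from a cancellation. The actual mechanism is that the two-term complex $W$ attached to $U$ and the shifted resolution $V^\ast(-3)[1]$ of $V$ correspond to numerically orthogonal semistable \emph{Kronecker modules}, and the Schofield--van den Bergh theorem (Theorem \ref{orthogThm}) guarantees generic Hom- and $\Ext^1$-vanishing between such modules. That theorem is the essential input --- it is also precisely what identifies $D_V$ with the pullback of the ample generator $D_f$ on the Kronecker space --- and without it step \emph{(a)} does not close.

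A secondary issue concerns the case $(\xi,\xi_\alpha)=0$: you argue that $\pi$ is birational and ``contracts the prime divisor $D_{E_\alpha}$ onto the discriminant locus,'' hence that this divisor is rigid and extremal. That $\pi$ contracts $D_{E_{\alpha.\beta}}$ is not established: the map is defined, a priori, only on the locus of sheaves admitting the expected resolution, i.e., off $D_{E_{\alpha.\beta}}$ (Proposition \ref{exceptionalDivProp}), and you would need to control where its extension over the generic point of that divisor lands before invoking rigidity of contracted divisors (a statement which itself requires care for rational, rather than regular, maps). The paper instead proves extremality directly by exhibiting complete curves through the general point of the stable locus of $\P\Hom(E_{-\alpha-3}^{m_1},E_{-\beta}^{m_2})$, each disjoint from $D_{E_{\alpha.\beta}}$, using that the unstable locus there has codimension at least two; this is both rigorous and shorter. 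Your treatment of the non-exceptional case --- the pullback of an ample class along a dominant map with positive-dimensional fibres is effective but not big, hence lies on the boundary of the closed effective cone --- is correct and is essentially the paper's argument, granting the dimension count of Proposition \ref{fibrationProp}.
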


It is useful to compare the corresponding orthogonal invariants with the classification of stable vector bundles.

\begin{remark}
It is useful to sketch the intersections defining the corresponding orthogonal invariants in the $(\mu,\Delta)$-plane in the three separate cases and to compare them with the classification of stable vector bundles.  First, in Figure \ref{figure-triangle} we display the important features of the portion of the curve $\delta(\mu) = \Delta$ lying over the interval $I_\alpha$ of slopes.  The curve consists of parabolic arcs $Q_{\xi_{-\alpha}}$ and $Q_{\xi_{-\alpha-3}}$ meeting at the vertical line $\mu = \alpha$.  

\begin{figure}[htbp]
\begin{center}
\input{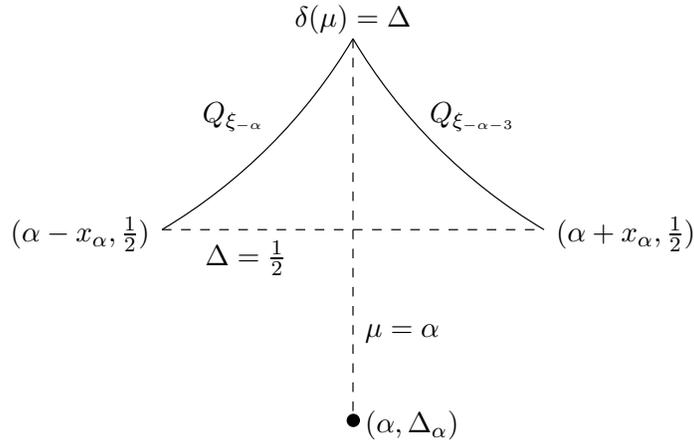}
\end{center}
\caption{Schematic diagram of the portion of the curve $\delta(\mu) = \Delta$ over the interval $I_\alpha$ in the $(\mu,\Delta)$-plane.}
\label{figure-triangle}
\end{figure}

Next, in Figure \ref{figure-arrangements} we overlay the sketch of the curve $\delta(\mu) = \Delta$ with the parabola $Q_\xi$ in several possible arrangements. 

\begin{enumerate}\item We have $(\xi,\xi_\alpha)>0$ and $(\mu^+,\Delta^+)$ lies on the $\delta$-curve.  

\item[($1'$)] Here we still have $(\xi,\xi_\alpha)>0$, but $(\mu^+,\Delta^+)$ does \emph{not} lie on the $\delta$-curve.  Correspondingly, $\mu^+>\alpha$.

\item In this case $Q_\xi$ passes through the Chern character $\xi_\alpha$, and $(\xi,\xi_\alpha)=0$.  The effective cone corresponds to $\xi_\alpha$.

\item Finally, when $(\xi,\xi_\alpha)<0$, the point $(\mu^+,\Delta^+)$ always lies on the $\delta$-curve.
\end{enumerate}
When we are in case (1), (2), or (3), we observe that $\mu^+$ is the minimum possible slope of a \emph{stable} Chern character $\zeta$ orthogonal to $\xi$ such that $\mu(\xi\te \zeta)\geq 0$.  Conversely, in case ($1'$), there exist stable Chern characters $\zeta$ orthogonal to $\xi$ with $\mu(\zeta)<\mu^+$ such that $\mu(\xi\te \zeta)\geq0$, and such Chern characters span noneffective rays in $\Pic(M(\xi))$.

\begin{figure}[htbp]
\begin{center}
\hspace{-.2in}\input{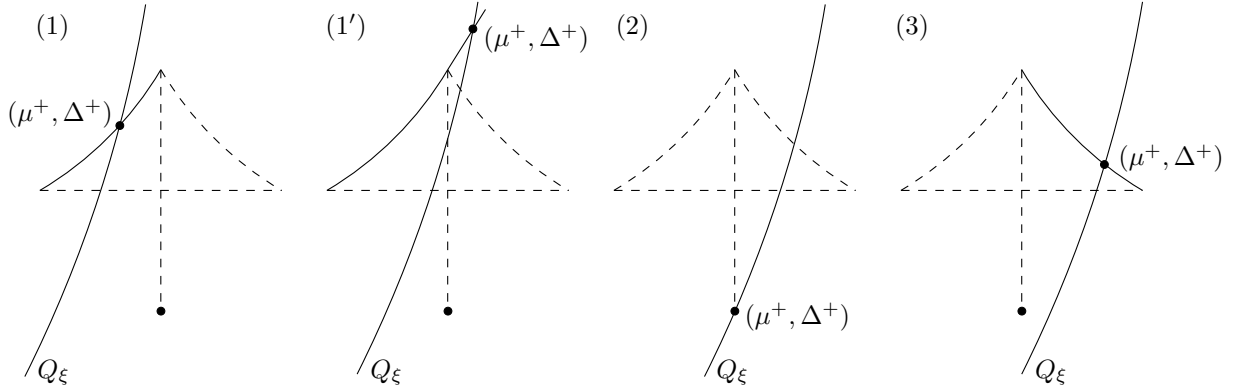}
\end{center}
\caption{Possible relative positions of the curve $\delta(\mu)=\Delta$ and the parabola $Q_\xi$.}
\label{figure-arrangements}
\end{figure}
\end{remark}

When $(\xi,\xi_\alpha)<0$, it is clear by the Intermediate Value Theorem that $Q_\xi$ intersects the parabola $Q_{\xi_{-\alpha-3}}$ as in case (3) above.  To verify that the schematic diagram is correct when $(\xi,\xi_\alpha)>0$, we must perform a small check.

\begin{lemma}\label{stupidInequalityLemma}
Suppose $(\xi,\xi_\alpha)>0$.  Then $\mu^+>\alpha-x_\alpha$.
\end{lemma}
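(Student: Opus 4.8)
The plan is to compare the two parabolas $Q_\xi$ and $Q_{\xi_{-\alpha}}$ at the level of the $(\mu,\Delta)$-plane, using the hypothesis $(\xi,\xi_\alpha)>0$ to pin down the sign of the relevant comparison at a convenient test point. First I would record what the hypothesis $(\xi,\xi_\alpha)>0$ says geometrically: since $\chi(\xi\te\xi_\alpha)$ is, up to the positive factor $r(\xi)r(\xi_\alpha)$, equal to $P(\alpha-\mu(\xi))-\Delta(\xi)-\Delta_\alpha$ evaluated along $Q_\xi$, the inequality $(\xi,\xi_\alpha)>0$ is equivalent to saying that the point $(\alpha,\Delta_\alpha)$ (the invariants of $E_\alpha$) lies strictly below the parabola $Q_\xi$; that is, $Q_\xi$ passes above $(\alpha,\Delta_\alpha)$. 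This is exactly the case-(1)/case-($1'$) picture in Figure \ref{figure-arrangements}.

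Next I would locate $\mu^+$ precisely. By definition $(\mu^+,\Delta^+)$ is the intersection point $Q_\xi\cap Q_{\xi_{-\alpha}}$, and $Q_{\xi_{-\alpha}}$ is the left-hand parabolic arc of $\delta$ over $I_\alpha$, so its graph is the decreasing-then... more precisely $\delta$ restricted to $[\alpha-x_\alpha,\alpha]$ is the increasing concave-up arc that starts at $(\alpha-x_\alpha,\tfrac12)$ and ends at $(\alpha,\Delta_\alpha)$. I want to show the intersection with $Q_\xi$ happens at a slope exceeding $\alpha-x_\alpha$. The cleanest route is a continuity/sign argument on the difference function $f(\mu):=(\text{value of }Q_\xi\text{ at }\mu)-(\text{value of }Q_{\xi_{-\alpha}}\text{ at }\mu)$, which is an affine (degree $\leq 1$) function of $\mu$ because both parabolas have the same leading coefficient $\tfrac12$. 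At $\mu=\alpha$ we have $f(\alpha)=\big(Q_\xi\text{ at }\alpha\big)-\Delta_\alpha>0$ by the reformulation of the hypothesis in the previous paragraph. At $\mu=\alpha-x_\alpha$ we have $Q_{\xi_{-\alpha}}$ equal to $\tfrac12$, while $Q_\xi$ at $\alpha-x_\alpha$ is some value $v$; since $f$ is affine and $f(\alpha)>0$, to conclude $\mu^+>\alpha-x_\alpha$ it suffices to check $f(\alpha-x_\alpha)=v-\tfrac12<0$, i.e. that $Q_\xi$ passes strictly below the line $\Delta=\tfrac12$ at the slope $\alpha-x_\alpha$. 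Equivalently, $\alpha-x_\alpha$ lies strictly between the two roots of $Q_\xi=\tfrac12$. But the larger root of $Q_\xi=\tfrac12$ is exactly $\mu_0$, which by Theorem \ref{existenceOfSlope} lies in the open interval $I_\alpha=(\alpha-x_\alpha,\alpha+x_\alpha)$, so $\mu_0>\alpha-x_\alpha$; combined with the fact that $Q_\xi$ passes below $\Delta=0$ (hence its smaller root is quite negative, certainly $<\alpha-x_\alpha$ — this needs a crude bound, e.g. using $x_\alpha\geq 0$ and $\alpha$ being the exceptional slope whose interval contains $\mu_0$, together with the explicit formula for $\mu_0$) we get that $\alpha-x_\alpha$ is strictly between the two roots, so $f(\alpha-x_\alpha)<0$. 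Since $f$ is affine with $f(\alpha-x_\alpha)<0<f(\alpha)$, its unique root $\mu^+$ satisfies $\alpha-x_\alpha<\mu^+<\alpha$.

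I expect the main obstacle to be the bookkeeping that shows the \emph{smaller} root of $Q_\xi=\tfrac12$ is less than $\alpha-x_\alpha$ — i.e., ruling out the degenerate possibility that $\alpha-x_\alpha$ sits to the left of both roots. Using the explicit formula $\mu_0=\tfrac12(-3-2\mu(\xi)+\sqrt{5+8\Delta(\xi)})$ from the proof of Theorem \ref{existenceOfSlope}, the two roots of $Q_\xi=\tfrac12$ are symmetric about $-\tfrac32-\mu(\xi)$, so the smaller root is $-3-2\mu(\xi)-\mu_0$; one must verify $-3-2\mu(\xi)-\mu_0<\alpha-x_\alpha$. Since $\mu_0>\alpha-x_\alpha$, the left side is less than $-3-2\mu(\xi)-(\alpha-x_\alpha)$, and the inequality then reduces to a statement comparing $\alpha-x_\alpha$ with $-\tfrac32-\mu(\xi)$, which follows from $\Delta(\xi)>\tfrac12$ (guaranteeing $\mu_0$ is the root closer to, rather than far from, the exceptional slopes near $\Delta=\tfrac12$) together with the elementary bound $|\alpha-x_\alpha - \mu_0|< 3$ coming from $\mu_0,\alpha-x_\alpha$ both lying in the closure of $I_\alpha$ and $x_\alpha<\tfrac32$. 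These are all routine estimates once set up correctly; the only subtlety is choosing the test points $\alpha$ and $\alpha-x_\alpha$ and exploiting that $f$ is affine so that a single sign check at each endpoint suffices.
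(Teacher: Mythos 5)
There is a genuine error in the sign check at the test point $\mu=\alpha$, and it stems from misplacing the parabola $Q_{\xi_{-\alpha}}$. That parabola has equation $\Delta = P(\mu-\alpha)-\Delta_\alpha$, so its value at $\mu=\alpha$ is $P(0)-\Delta_\alpha = 1-\Delta_\alpha$, not $\Delta_\alpha$: the point $(\alpha,\Delta_\alpha)$ does \emph{not} lie on $Q_{\xi_{-\alpha}}$ (indeed $(\xi_{-\alpha},\xi_\alpha)=r_\alpha^2(1-2\Delta_\alpha)=1\neq 0$; in Figure \ref{figure-triangle} the two arcs of $\delta$ meet at $(\alpha,1-\Delta_\alpha)$, while $(\alpha,\Delta_\alpha)$ sits below the line $\Delta=\tfrac12$). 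Hence $f(\alpha)=Q_\xi(\alpha)-(1-\Delta_\alpha)$, and your (correct) reformulation of the hypothesis, $Q_\xi(\alpha)>\Delta_\alpha$, does not give $f(\alpha)>0$ since $\Delta_\alpha<1-\Delta_\alpha$. Worse, $f(\alpha)>0$ is actually false in case ($1'$): for $\xi=(2,0,\tfrac{11}{2})$ one has $\alpha=2$ and $\mu^+=\tfrac{21}{10}>\alpha$, so the affine function $f$ (which is increasing, see below) is negative at $\mu=\alpha$, and your stronger conclusion $\mu^+<\alpha$ fails there as well. (A small separate typo: the pairing is $r(\xi)r_\alpha\bigl(P(\alpha+\mu(\xi))-\Delta(\xi)-\Delta_\alpha\bigr)$, not $P(\alpha-\mu(\xi))$; your geometric conclusion from it is nonetheless right.)

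The argument is repairable, and the repair is essentially what the paper does. The difference $f(\mu)=Q_\xi(\mu)-Q_{\xi_{-\alpha}}(\mu)$ is affine with slope $\mu(\xi)+\alpha$, and this slope is positive: $(\xi,\xi_\alpha)>0$ forces $\Hom(E_{-\alpha},U)\neq 0$ for a stable $U$ of character $\xi$, whence $-\alpha<\mu(\xi)$ by stability. (You never establish the sign of this slope, and without it a single sign check cannot locate the root.) Once $f$ is known to be increasing, your correct computation $f(\alpha-x_\alpha)<0$ — which does follow from $\alpha-x_\alpha$ lying strictly between the two roots of $Q_\xi=\tfrac12$, using $\mu_0>\alpha-x_\alpha$ and the fact that the roots are more than $3$ apart because $\Delta(\xi)>\tfrac12$ while $2x_\alpha<3$ — already yields $\mu^+>\alpha-x_\alpha$ with no second test point needed. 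The paper packages the same information as: the vertex of $Q_{\xi_{-\alpha}}$ lies to the right of and above the vertex of $Q_\xi$, and the point $(\alpha-x_\alpha,\tfrac12)$ of $Q_{\xi_{-\alpha}}$ lies above $Q_\xi$, so the unique intersection of the two translated parabolas occurs to the right of $\alpha-x_\alpha$.
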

\begin{proof}
Let $\mu_{0}\in \R$ be the larger of the two slopes such that $(\mu_{0},\frac{1}{2})\in Q_\xi$.  Then $\mu_0\in I_\alpha$.  Let $(\mu_1,\Delta_1)$ and $(\mu_2,\Delta_2)$ be the vertices of $Q_\xi$ and $Q_{\xi_{-\alpha}}$, respectively.  We compute $$ \mu_1= -\frac{3}{2}-\mu(\xi)\qquad \Delta_1= -\frac{1}{8}-\Delta(\xi) \qquad  \mu_2 = -\frac{3}{2}+\alpha \qquad \Delta_2= -\frac{1}{8}-\Delta_\alpha.$$ 

Let $U$ be a slope-stable bundle of character $\xi$.
Since $(\xi,\xi_\alpha)>0$, there is a nonzero homomorphism $E_{-\alpha}\to U$ by stability; see Lemma \ref{lem-estimates} (1) for details.  Stability gives $-\alpha<\mu(\xi)$, from which we conclude $\mu_2>\mu_1$.  Since $\Delta(\xi)>\frac{1}{2}$, we also have $\Delta_2>\Delta_1$.  

The point $p=(\alpha-x_\alpha,\frac{1}{2})$ lies on $Q_{\xi_{-\alpha}}$ and above $Q_\xi$. The vertex of $Q_{\xi_{-\alpha}}$ lies to the left of $p$.  Since $Q_\xi$ and $Q_{\xi_{-\alpha}}$ are translates of one another, they intersect in a single point, and the Intermediate Value Theorem shows the vertex of $Q_{\xi_{-\alpha}}$ lies above $Q_{\xi}$.  An elementary calculation implies that $\mu^+>\alpha-x_\alpha,$ as in Figure \ref{figure-arrangements}.
\end{proof}

The following result is now immediate from Theorem \ref{stableThm}.

\begin{proposition}\label{stableCharacterProp}
The Chern character $\xi^+$ is semistable.
\end{proposition}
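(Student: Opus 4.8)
The plan is to verify, for $\xi^+=(r^+,\mu^+,\Delta^+)$, the three hypotheses of Theorem~\ref{stableThm}: that $c_1(\xi^+)=r^+\mu^+$ and $\chi(\xi^+)=r^+(P(\mu^+)-\Delta^+)$ are integers, and that $\Delta^+\geq\delta(\mu^+)$. The integrality is arranged by the choice of $r^+$. In cases (1) and (3) of the definition, $(\mu^+,\Delta^+)$ is the intersection point of two rational parabolas $Q_\xi\cap Q_{\xi_\beta}$ (with $\beta=-\alpha$ or $\beta=-\alpha-3$), so it has rational coordinates and $r^+$ can be taken large and divisible by the common denominator of $\mu^+$ and $P(\mu^+)-\Delta^+$. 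In case (2) we have $(\mu^+,\Delta^+)=(\alpha,\Delta_\alpha)$, which is already rational; here $\xi^+$ is in fact a positive integer multiple of the exceptional character $\xi_\alpha$ once $r_\alpha\mid r^+$, so $\xi^+$ is semistable (being the character of a direct sum of copies of $E_\alpha$) and there is nothing further to prove. So from now on assume $(\xi,\xi_\alpha)\neq 0$, and the content is the inequality $\Delta^+\geq\delta(\mu^+)$.

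If $(\xi,\xi_\alpha)<0$, the discussion preceding Lemma~\ref{stupidInequalityLemma} shows (by the Intermediate Value Theorem) that $(\mu^+,\Delta^+)=Q_\xi\cap Q_{\xi_{-\alpha-3}}$ lies on the arc of the curve $\delta(\mu)=\Delta$ over $[\alpha,\alpha+x_\alpha)$, since that arc is exactly $Q_{\xi_{-\alpha-3}}$; hence $\Delta^+=\delta(\mu^+)$ and Theorem~\ref{stableThm} applies directly. If $(\xi,\xi_\alpha)>0$, then by Lemma~\ref{stupidInequalityLemma} we have $\mu^+>\alpha-x_\alpha$ and $(\mu^+,\Delta^+)=Q_\xi\cap Q_{\xi_{-\alpha}}$. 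When $\mu^+\leq\alpha$, the point $(\mu^+,\Delta^+)$ lies on the arc of $Q_{\xi_{-\alpha}}$ over $(\alpha-x_\alpha,\alpha]$, which is again part of the $\delta$-curve, so $\Delta^+=\delta(\mu^+)$ and we are done.

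The remaining case is $(\xi,\xi_\alpha)>0$ with $\mu^+>\alpha$ (case $(1')$), and this is the subtle point I expect to be the main obstacle. Here $\Delta^+=Q_{\xi_{-\alpha}}(\mu^+)=P(\mu^+-\alpha)-\Delta_\alpha$, and by the elementary identity $P(t)-P(-t)=3t$ this equals $\bigl(P(-(\mu^+-\alpha))-\Delta_\alpha\bigr)+3(\mu^+-\alpha)$. As long as $\mu^+\in I_\alpha$, the first summand is precisely $\delta(\mu^+)$, so $\Delta^+=\delta(\mu^+)+3(\mu^+-\alpha)>\delta(\mu^+)$. However $\mu^+$ can be much larger than $\alpha+x_\alpha$ (one produces such examples with $\xi$ of large rank and $\Delta(\xi)$ close to $\tfrac12$), so one must also show that the exceptional parabola $Q_{\xi_{-\alpha}}$ stays weakly above the curve $\delta$ on all of $[\alpha,\infty)$. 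For $\mu^+-\alpha$ bounded away from $0$ this is immediate, since $\delta<1$ everywhere while $P(\mu^+-\alpha)-\Delta_\alpha$ grows quadratically; the genuinely delicate range is $\mu^+$ just beyond $\alpha+x_\alpha$, where both $\Delta^+$ and $\delta(\mu^+)$ sit just above $\tfrac12$. I expect to handle this last range using the local structure of $\delta$ near the Cantor set together with the positivity $\chi(E_\alpha,E_\gamma)>0$ for every exceptional slope $\gamma>\alpha$ — which is itself a one-line computation, as $\chi(E_\alpha,E_\gamma)=r_\alpha r_\gamma\bigl(P(\gamma-\alpha)-\Delta_\alpha-\Delta_\gamma\bigr)$ and $P(\gamma-\alpha)>P(0)=1>\Delta_\alpha+\Delta_\gamma$. (Alternatively, one could package the whole argument through Dr\'ezet's characterization of semistable characters: $\xi^+$ fails to be semistable only if some exceptional $E_\beta$ has both $\chi(E_\beta,\xi^+)>0$ and $\chi(\xi^+,E_\beta)>0$, which would force $(\mu^+,\Delta^+)$ strictly below both $Q_{\xi_{-\beta}}$ and $Q_{\xi_{-\beta-3}}$, contradicting that $(\mu^+,\Delta^+)\in Q_\xi\cap Q_{\xi_{-\alpha}}$ and $\mu_0\in I_\alpha$.) Granting that $Q_{\xi_{-\alpha}}$ lies weakly above $\delta$ on $[\alpha,\infty)$, we conclude $\Delta^+\geq\delta(\mu^+)$ in all cases, and Theorem~\ref{stableThm} completes the proof.
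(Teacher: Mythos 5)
Your reduction to the inequality $\Delta^+\geq\delta(\mu^+)$ via Theorem \ref{stableThm}, and your treatment of cases (2), (3), and of case (1) when $\mu^+\in I_\alpha$, are correct and follow the same route as the paper (which dispatches the whole proposition in one line after Lemma \ref{stupidInequalityLemma}); in particular you are right to single out the exceptional case $(\xi,\xi_\alpha)=0$, where the inequality $\Delta\geq\delta(\mu)$ actually fails and one must instead observe that $\xi^+$ is a multiple of $\xi_\alpha$. You are also right that the genuinely delicate point is case $(1')$ with $\mu^+>\alpha+x_\alpha$, a configuration the paper's Figure \ref{figure-arrangements} quietly ignores and which does occur (necessarily with $\alpha$ non-integral, since for integral $\alpha$ one has $P(t)-\Delta_\alpha=P(t)\geq 1>\delta$ for all $t\geq 0$). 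But your proof does not close this case: it ends with ``I expect to handle this last range using\dots'', and the tools you name are not sufficient as stated.

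Concretely, what must be shown is that $Q_{\xi_{-\alpha}}$ stays weakly above $\delta$ on $[\alpha,\infty)$. The left-hand arcs of $\delta$ over each $I_\gamma$ with $\gamma>\alpha$ are handled by a soft argument of the kind you use elsewhere (the difference $Q_{\xi_{-\alpha}}-Q_{\xi_{-\gamma}}$ is linear increasing and already positive at $\mu=\gamma-x_\gamma$, where $Q_{\xi_{-\gamma}}=\tfrac12<Q_{\xi_{-\alpha}}(\gamma-x_\gamma)$). The obstruction is the right-hand arcs: since $Q_{\xi_{-\alpha}}$ is increasing and $Q_{\xi_{-\gamma-3}}$ is decreasing on $[\gamma,\gamma+x_\gamma]$, everything reduces to the single inequality $Q_{\xi_{-\alpha}}(\gamma)\geq\delta(\gamma)=1-\Delta_\gamma$, i.e.\ $P(\gamma-\alpha)-\Delta_\alpha\geq 1-\Delta_\gamma$. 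Unwinding with $1-2\Delta_\gamma=r_\gamma^{-2}$, this is $r_\gamma\,\chi(E_\alpha,E_\gamma)\geq r_\alpha$, which is strictly stronger than the positivity $\chi(E_\alpha,E_\gamma)>0$ you propose: for $\alpha=\tfrac{13}{34}$ and $\gamma=\tfrac25$ one needs $\chi(E_\alpha,E_\gamma)\geq 7$ (it equals $8$), so positivity alone ($\chi\geq 1$) does not suffice, and the margin is genuinely tight. Your parenthetical alternative also begs the question: knowing $(\mu^+,\Delta^+)\in Q_\xi\cap Q_{\xi_{-\alpha}}$ with $\mu_0\in I_\alpha$ does not by itself prevent the point from lying strictly below both $Q_{\xi_{-\beta}}$ and $Q_{\xi_{-\beta-3}}$ for some \emph{other} exceptional $\beta$ near $\mu^+$; ruling this out is exactly the inequality above. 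That inequality is true (it amounts to $r_\gamma\hom(E_\alpha,E_\gamma)\geq r_\alpha$, a rank count forced by the structure of exceptional pairs), but it is the missing lemma, and your proposal neither states nor proves it.
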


\begin{remark}
Case ($1'$) of the previous remark never  arises for Chern characters $\xi$ of rank $1$ (or rank $0$, in a suitable sense that will become clear in \S \ref{ssec-rank0}).  In particular, for the Hilbert scheme of points, the primary edge of the effective cone always corresponds to the stable orthogonal Chern character of smallest positive slope \cite{HuizengaPaper2}.  It follows from results in \cite{HuizengaPaper2} and \cite{CoskunHuizenga} that if $\xi = (1,0,-n)$ is the Chern character of an ideal sheaf of $n$ points and $\zeta$ is a stable Chern character with $\mu(\zeta)>0$ such that $(\xi,\zeta)=0$, then the image of the rational map \begin{eqnarray*}M(\xi)\times M(\zeta) &\dashrightarrow & M(\xi \te \zeta)\\ (U,V) & \mapsto &U\te V \end{eqnarray*} is not contained entirely in the $\Theta$-divisor of sheaves with a section.

The natural analog of this statement is false in higher rank.  For an example, consider the moduli space $M(\xi) = M(r,\mu,\Delta)=M(2,0,\frac{11}{2})$. The associated exceptional slope is $\alpha=2$, and $(\xi,\xi_\alpha)=1$.  Furthermore, $\mu^+ = \frac{21}{10} > \alpha$, so we are in case ($1'$) of the previous remark.  If $(\mu_0,\Delta_0)$ is the point of intersection of $Q_\xi$ with the $\delta$-curve, then for sufficiently divisible $r_0$ the Chern character $(r_0,\mu_0,\Delta_0)$ is stable and orthogonal to $\xi$, but corresponds to a noneffective divisor class on $M(\xi)$.  Thus the image of the tensor product map $M(\xi)\times M(\zeta)\dashrightarrow M(\xi\te \zeta)$ must land in the $\Theta$-divisor, for otherwise we would obtain an effective divisor of class $\zeta$ on $M(\xi)$.
\end{remark}

\section{Transcendentality of the generalized Cantor set}\label{sec-transcendental}

Let $C = \R \sm \bigcup_{\alpha \in \F E} I_\alpha$ be the generalized Cantor set, where $\F E$ is the set of exceptional slopes. In this section, we prove the following number-theoretic fact about points of $C$.

\begin{theorem}\label{cantorTranscendThm}
Let $x$ be a point of $C$. Then either $x$ is an endpoint of an interval $I_\alpha$ and is a quadratic irrational number, or $x$ is transcendental.
\end{theorem}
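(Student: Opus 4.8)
The plan is to understand the arithmetic of the endpoints $x_\alpha$ and of the Cantor set via the recursive ``mutation'' structure that builds exceptional slopes from dyadic rationals. First I would record the explicit algebraic data: for an exceptional slope $\alpha$ with rank $r_\alpha$, the discriminant is $\Delta_\alpha = \tfrac12(1-r_\alpha^{-2})$, and the endpoints of $I_\alpha$ are $\alpha \pm x_\alpha$ with $x_\alpha = \tfrac12\big(3-\sqrt{5+8\Delta_\alpha}\big) = \tfrac12\big(3 - \sqrt{9 - 4 r_\alpha^{-2}}\big)$. Hence each endpoint $\alpha + x_\alpha$ lies in the field $\Q(\sqrt{9 r_\alpha^2 - 4})$, and one checks $9r_\alpha^2-4$ is never a perfect square (work mod small numbers, e.g. mod $3$ or mod $9$, or note $9r^2-4$ is strictly between $(3r-1)^2$ and $(3r)^2$ for $r\ge 1$), so each endpoint is a genuine quadratic irrational. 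This settles the first alternative in the theorem.

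Next I would attack the harder claim: a point $x\in C$ that is not an endpoint is transcendental. The natural idea is to build exceptionally good rational approximations to $x$ out of the two endpoints of the small intervals $I_\alpha$ that nest down to $x$. Concretely, the dyadic labeling $\varepsilon\colon \Z[\tfrac12]\to\F E$ organizes the complement $\bigcup I_\alpha$ as the standard middle-thirds-type construction: a point $x\in C$ corresponds to an infinite binary address, and at each stage $q$ there is a pair of adjacent exceptional slopes (the ``parents'' in the mutation rule $\alpha.\beta$) whose intervals bracket $x$, with a gap between them that shrinks rapidly — roughly like $x_\alpha \sim \tfrac{1}{9 r_\alpha^2}$, i.e. comparably to the inverse square of the denominators involved. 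The key computation is to pin down how fast $r_\alpha$ grows along such a nested sequence relative to the size of the gap; I expect the rank (equivalently the denominator $q_n$ of the rational endpoint) to grow at least geometrically while the distance from $x$ to that endpoint is of order $q_n^{-2}$ or better, giving approximations $|x - p_n/q_n| < q_n^{-2-\delta}$ for some fixed $\delta>0$, or — if the exponent is exactly $2$ — infinitely many such approximations with a controlled implied constant that still beats what an algebraic number of degree $\ge 2$ allows after one applies the sharper machinery.

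To conclude I would invoke the Diophantine approximation results advertised in the introduction. If the nested endpoints already give exponent strictly above $2$, Roth's theorem immediately forces $x$ to be transcendental (it is certainly irrational, being in $C$). If the exponent is only $2$ but we have too many good approximations, or if we need to rule out quadratic irrationality specifically, the Schmidt Subspace Theorem applied to the linear forms built from $1, x, x^2$ handles it — alternatively, one can exploit the continued fraction expansion: the endpoints produce an explicit description of the partial quotients (or convergents) of $x$, and if those partial quotients are unbounded or grow, $x$ cannot be a quadratic irrational (whose continued fraction is eventually periodic, hence bounded), while the density of good approximations rules out algebraicity of any degree by Roth. The main obstacle is entirely in the middle step: making the recursion $\varepsilon(\tfrac{2p+1}{2^{q+1}}) = \varepsilon(\tfrac{p}{2^q}).\varepsilon(\tfrac{p+1}{2^q})$ and the formula for $\alpha.\beta$ yield sharp enough two-sided estimates on $x_\alpha$, $r_\alpha$, and the positions of the bracketing intervals to guarantee the approximation exponent needed — this is where the ``remarkable number-theoretic properties of continued fractions of points of $C$'' do the real work, and it will require a careful inductive analysis of the denominators $r_\alpha$ along a branch of the construction rather than any single clean estimate.
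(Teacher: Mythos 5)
Your treatment of the endpoint case is fine: from $\Delta_\alpha=\tfrac12(1-r_\alpha^{-2})$ one gets $5+8\Delta_\alpha=(9r_\alpha^2-4)/r_\alpha^2$, and $9r_\alpha^2-4$ lies strictly between $(3r_\alpha-1)^2$ and $(3r_\alpha)^2$, so each endpoint $\alpha\pm x_\alpha$ is a genuine quadratic irrational.

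The transcendence argument, however, has a fatal gap. Every point of $C$ has a continued fraction expansion all of whose partial quotients are $1$ or $2$ (this is forced by the structure of the exceptional slopes: the expansions $\gamma^e$ consist only of $1$s and $2$s, and points of $C$ are limits of exceptional slopes along nested intervals). Numbers with bounded partial quotients are \emph{badly approximable}: there is a constant $c>0$ with $|x-p/q|>c/q^2$ for all rationals $p/q$. So the middle step you flag as the ``main obstacle'' cannot succeed — no sequence of rational approximations with exponent $2+\delta$ exists, Roth's theorem gives nothing, and your fallback of detecting unbounded partial quotients also fails since they are bounded by $2$. A direct application of the Subspace Theorem to the forms built from $1,x,x^2$ likewise needs unusually good approximations that these points simply do not admit from their rational convergents. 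What actually carries the day in the paper is a purely combinatorial analysis of the continued fraction word of $x$ read off from its left--right address in the Cantor construction: one shows (i) the expansion is \emph{not} eventually periodic whenever the address is not eventually constant, so $x$ is not a quadratic irrational by Euler--Lagrange, and (ii) the expansion \emph{begins in arbitrarily long palindromes}, because the even-length expansions $\gamma^e$ of exceptional slopes are palindromes satisfying the concatenation rule $\gamma^e=\beta^o2\alpha^e$. The theorem of Adamczewski and Bugeaud on palindromic continued fractions (itself a consequence of the Subspace Theorem, but applied to approximation by \emph{quadratic} numbers arising from the palindrome symmetry, not by rationals) then says such an $x$ is quadratic or transcendental, and (i) eliminates the quadratic case. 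Your proposal never identifies the palindrome structure, which is the essential input; without it, or some equally strong combinatorial substitute, the approximation-theoretic route as you describe it cannot be completed.
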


Theorem \ref{cantorTranscendThm} completes the proof of Theorem \ref{existenceOfSlope}, establishing the existence of the corresponding exceptional slope. Before giving the details, we will outline the roadmap of the proof. The reader interested solely in geometry can safely skip to the next section, but we have written the argument for readers with minimal number theory background in mind.

The proof of the theorem relies on remarkable number-theoretic properties of exceptional slopes.  Since the set $C$ is stable under translation by integers $x\mapsto x+k$ and inversion $x\mapsto -x$, we will restrict ourselves to $C\cap (0,\frac{1}{2})$. Every rational number $q$ has an even length $q^e$ and an odd length $q^o$ continued fraction expansion. The even continued fraction expansion $\beta^e$ of an exceptional slope $0< \beta < \frac{1}{2}$ is a palindrome made of a string of ones and twos beginning and ending with a two. Furthermore, given two exceptional slopes $$\alpha = \varepsilon \left( \frac{p-1}{2^q}\right)\qquad \textrm{and}\qquad \beta = \varepsilon \left( \frac{p+1}{2^q} \right),$$ the even partial fraction expansion of $\gamma = \alpha . \beta = \varepsilon\left( \frac{p}{2^q}\right)$ is obtained by   concatenating the odd partial fraction expansion of $\beta$ with the even partial fraction expansion of $\alpha$ with a two in between (see \S \ref{subsec-continued}), i.e. $$\gamma^e = \beta^o 2 \alpha^e.$$ Using this fact, we give an explicit algorithm for computing the continued fraction expansion of any point of the Cantor set $C$ to arbitrary length.  The crux of the matter is to show that given $x \in C$ which is not an endpoint of $I_{\alpha}$ for some $\alpha$, the continued fraction expansion of $x$ is not eventually periodic (Theorem \ref{thm-cantor-notquad}). A celebrated theorem of Euler and Lagrange \cite{Davenport} asserts that a number is a quadratic irrational if and only if its continued fraction expansion is eventually periodic. This suffices to prove that $x$ is not a quadratic irrational. To prove the stronger fact that $x$ is transcendental, we need a theorem of Adamczewski and Bugeaud.

\begin{theorem}[Adamczewski-Bugeaud, Theorem 2.1 \cite{AdB}]\label{thm-AdB}
If the continued fraction expansion of a number $x$ begins in arbitrarily long palindromes, then $x$ is either quadratic irrational or transcendental.
\end{theorem}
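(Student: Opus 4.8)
The plan is to prove the dichotomy by assuming $x$ is algebraic and showing it must then be quadratic; the transcendental case needs no argument. Write $x = [a_0; a_1, a_2, \ldots]$. Since $x$ begins in arbitrarily long palindromes its continued fraction is infinite, so $x$ is irrational, and since subtracting the integer $a_0$ alters neither the algebraicity nor the degree of $x$, I may assume $a_0 = 0$, so that $x = [0; a_1, a_2, \ldots]$ with convergents $p_n/q_n$. The heart of the matter is a single combinatorial identity: \emph{if the word $a_1 a_2 \cdots a_n$ is a palindrome, then $p_n = q_{n-1}$.} Granting this, each palindrome of length $n$ produces an unusually good simultaneous rational approximation to the pair $(x, x^2)$, and the Schmidt Subspace Theorem \cite{Schmidt} forces a linear relation among $1, x, x^2$.

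To establish the identity I would use continuants $K(b_1, \ldots, b_m)$, defined by $K(\,) = 1$, $K(b_1) = b_1$, and $K(b_1, \ldots, b_m) = b_m K(b_1, \ldots, b_{m-1}) + K(b_1, \ldots, b_{m-2})$. With $a_0 = 0$ one has $q_n = K(a_1, \ldots, a_n)$ and $p_n = K(0, a_1, \ldots, a_n)$; expanding the latter from the front via $K(b_1, \ldots, b_m) = b_1 K(b_2, \ldots, b_m) + K(b_3, \ldots, b_m)$ gives $p_n = 0 \cdot K(a_1, \ldots, a_n) + K(a_2, \ldots, a_n) = K(a_2, \ldots, a_n)$. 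On the other hand $q_{n-1} = K(a_1, \ldots, a_{n-1})$, and the reversal symmetry $K(b_1, \ldots, b_m) = K(b_m, \ldots, b_1)$ rewrites this as $K(a_{n-1}, \ldots, a_1)$. When $a_1 \cdots a_n$ is a palindrome the reversed word $(a_{n-1}, \ldots, a_1)$ equals $(a_2, \ldots, a_n)$, so $q_{n-1} = K(a_2, \ldots, a_n) = p_n$, as claimed.

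Next I would convert the identity into Diophantine data. For every index $n$ with $a_1 \cdots a_n$ palindromic, $p_n = q_{n-1}$ gives $|q_n x - q_{n-1}| = |q_n x - p_n| \asymp q_{n+1}^{-1}$, and the one-line rewriting $x^2 q_n - p_{n-1} = (x q_{n-1} - p_{n-1}) + x(x q_n - q_{n-1})$ yields $|q_n x^2 - p_{n-1}| \lesssim q_n^{-1}$. Assuming $x$ algebraic, consider the three linearly independent linear forms $L_1 = x X_0 - X_1$, $L_2 = x^2 X_0 - X_2$, $L_3 = X_0$ in $X_0, X_1, X_2$ (their coefficient determinant is $1$), evaluated at the integer points $\mathbf{v}_n = (q_n, q_{n-1}, p_{n-1})$. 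Since $x \in (0,1)$ forces $p_{n-1} < q_{n-1} \le q_n$, we have $\|\mathbf{v}_n\|_\infty = q_n$, and the bounds above give $|L_1(\mathbf{v}_n) L_2(\mathbf{v}_n) L_3(\mathbf{v}_n)| \lesssim q_{n+1}^{-1} \le q_n^{-1} < \|\mathbf{v}_n\|^{-1/2}$ for all large $n$. As there are infinitely many palindromic indices and the $q_n$ are strictly increasing, the points $\mathbf{v}_n$ are distinct and infinitely many of them satisfy the Subspace Theorem inequality, so by \cite{Schmidt} they lie in finitely many proper rational subspaces of $\Q^3$.

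Finally I would extract the conclusion. Some fixed proper subspace $c_0 X_0 + c_1 X_1 + c_2 X_2 = 0$, with $(c_0, c_1, c_2) \in \Q^3 \setminus \{0\}$, contains $\mathbf{v}_n$ for infinitely many $n$; dividing the relation $c_0 q_n + c_1 q_{n-1} + c_2 p_{n-1} = 0$ by $q_n$ and letting $n \to \infty$ along this subsequence, using $q_{n-1}/q_n \to x$ and $p_{n-1}/q_n \to x^2$, produces the nontrivial relation $c_0 + c_1 x + c_2 x^2 = 0$. Hence $x$ is rational or quadratic; irrationality rules out the former, so $x$ is quadratic. This shows that an algebraic $x$ with arbitrarily long palindromic prefixes is quadratic, which is exactly the stated dichotomy. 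The deep ingredient, and the only genuine obstacle, is the Subspace Theorem itself; the elementary-but-essential step is the continuant identity $p_n = q_{n-1}$, and the one point requiring care is ensuring the limiting relation is nondegenerate — guaranteed because $(c_0, c_1, c_2) \neq 0$ — and that the chosen forms make the product a true negative power of the height rather than forcing it to vanish identically.
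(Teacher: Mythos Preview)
The paper does not prove this theorem; it is quoted from \cite{AdB} and used as a black box. Your proposal is correct and is essentially a faithful reconstruction of the original Adamczewski--Bugeaud argument: the mirror identity $p_n=q_{n-1}$ for palindromic prefixes, the resulting simultaneous approximation $|q_n x - q_{n-1}|\asymp q_{n+1}^{-1}$ and $|q_n x^2 - p_{n-1}|\lesssim q_n^{-1}$, and the application of the Schmidt Subspace Theorem to the forms $xX_0-X_1$, $x^2X_0-X_2$, $X_0$. One small point worth making explicit: the limits $q_{n-1}/q_n\to x$ and $p_{n-1}/q_n\to x^2$ hold only along the palindromic subsequence (via $q_{n-1}=p_n$), not in general, but that is exactly the subsequence you are using, so the passage to the limit is legitimate.
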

If $x\in C$ is not an endpoint of some interval $I_\alpha$, we will show that its continued fraction expansion is not eventually periodic and  begins in arbitrarily long palindromes.  We conclude that  $x$ is transcendental. 

We now turn to the details. At times,  it will  be convenient to  consider certain portions of the Cantor set.  Let  $C' = C\cap (-1,1)$ and $C'' = C\cap (0,\frac{1}{2})$ for brevity.  By symmetry, it suffices to prove the theorem for $C''$.  Similarly, we define restricted intervals of exceptional slopes $\F E' = \F E \cap (-1,1)$ and $\F E'' = \F E\cap (0,\frac{1}{2})$.

\subsection{The Left-Right sequences of points in $C'$}

Recall that the order of an exceptional slope $\alpha \in \F E$ is the smallest natural number $q$ such that $\alpha = \varepsilon(\frac{p}{2^q})$. Using the order, we rewrite $$ C' = C \cap (-1,1) = (-1,1) \sm \bigcup_{\alpha\in \F E} I_\alpha = \bigcap_{q=0}^\infty \left((-1,1) \sm \bigcup_{\alpha\in \F E \atop \ord \alpha = q} I_\alpha\right).$$ Define sets $$C_n = \bigcap_{q=0}^n \left((-1,1) \sm \bigcup_{\alpha\in \F E \atop \ord \alpha = q} I_\alpha\right),$$ putting $C_{-1} = (-1,1)$ for convenience.  Then $C_0$ is a union of two intervals $$C_0 = [-1+x_{-1},0-x_0]\cup[0+x_0, 1- x_1].$$ If $x\in C$, then  $x$ is either in the left or right interval of $C_0$.  To form $C_1$, one removes an interior open subinterval from each component of $C_0$, yielding a left and right subinterval of each component of $C_0$.  At each stage, $C_n$ is comprised of $2^{n+1}$ disjoint intervals, with each component of $C_{n-1}$ containing two of the intervals of $C_n$, a ``Left'' and a ``Right'' subinterval.  Associated to an $x\in C'$, we build an infinite word $\sigma_x$ in the alphabet $\Sigma = \{L,R\}$; the $n$th letter of $\sigma_x$ corresponds to whether $x$ lies in the Left or Right part of the component $I\subset C_{n-1}$ containing $x$.

Conversely, given an infinite word $\sigma\in \Sigma^\N$, there is a unique $x_\sigma \in C'$ with $\sigma_{x_\sigma}= \sigma$.  Thus points in $C'$ determine and are determined by left-right sequences.

\subsection{Action of the alphabet $\Sigma = \{L,R\}$ on $\F E'$} Let us develop a more concrete description of the points in the set $C'$.

An exceptional slope $\gamma\in \F E$ can be uniquely written as $\gamma = \varepsilon(\frac{p}{2^q})$, where $q\geq 0$ and $p$ is odd unless $q=0$.  Then setting $$ \alpha = \varepsilon\left(\frac{p-1}{2^q}\right) \qquad \textrm{and} \qquad \beta = \varepsilon\left(\frac{p+1}{2^q}\right)$$ we have $\gamma = \alpha.\beta$.  We call $\alpha$ and $\beta$ the \emph{left} and \emph{right parent slopes} of $\gamma$, and write $$\alpha = \pare_L(\gamma) \qquad \textrm{and} \qquad \beta = \pare_R(\gamma).$$
We define $$\gamma\cdot  L =  \pare_L(\gamma).\gamma = \alpha.\gamma \qquad \textrm{and} \qquad \gamma \cdot R = \gamma.\pare_R(\gamma)=\gamma.\beta.$$ 

Denoting by $\Sigma^*$ the monoid of finite strings in the alphabet $\Sigma$, we induce a right action of $\Sigma^*$ on $\F E$.  The next easy lemma establishes that exceptional slopes can either be understood in terms of dyadic rationals via the map $\varepsilon$ or in terms of finite left-right sequences.

\begin{lemma}
There is a bijection $\Sigma^*\to \F E'$ given by $$\sigma \mapsto 0\cdot \sigma.$$ That is, every $\gamma\in \F E'$ admits a unique finite left-right sequence $\sigma_\gamma \in \Sigma^\ast$ such that $$\gamma = 0 \cdot \sigma_\gamma.$$
\end{lemma}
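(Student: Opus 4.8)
The plan is to prove that $\sigma \mapsto 0\cdot\sigma$ is a bijection $\Sigma^* \to \F E'$ by exhibiting a two-sided inverse, namely the map $\gamma \mapsto \sigma_\gamma$ built from the parent-slope structure. The essential observation is that the recursive construction of $\F E$ via the mediant-like operation $\alpha.\beta$ endows the dyadic rationals $\frac{p}{2^q}$ with a rooted infinite binary tree structure (rooted at $0 = \varepsilon(0)$, with the slopes $\varepsilon(\pm 1) = \pm 1$ bounding $\F E'$), and the right action of $\Sigma^*$ is precisely the action that walks down this tree.

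First I would set up the tree carefully. For $\gamma = \varepsilon(\frac{p}{2^q})\in \F E'$ written with $p$ odd (unless $q=0$), the left and right parents $\pare_L(\gamma) = \varepsilon(\frac{p-1}{2^q})$ and $\pare_R(\gamma) = \varepsilon(\frac{p+1}{2^q})$ are exceptional slopes of order $< q$ (since $\frac{p-1}{2^q}$ and $\frac{p+1}{2^q}$ simplify), and they straddle $\gamma$: $\pare_L(\gamma) < \gamma < \pare_R(\gamma)$. The key structural claim is that for $\gamma \in \F E'$ of order $q \geq 1$, exactly one of its two parents has order $q-1$ and lies in $\F E'$ — call it the \emph{tree-parent} $\pi(\gamma)$ — and $\gamma = \pi(\gamma)\cdot L$ or $\gamma = \pi(\gamma)\cdot R$ according to whether $\gamma$ is the left or right child. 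This follows by inspecting which of $\frac{p\pm1}{2^q}$ has odd numerator after reduction: writing $p = 2p'\pm 1$, one of $p-1, p+1$ equals $2p'$ (even, so the fraction reduces, dropping the order) while the other stays at denominator $2^q$ with odd numerator. One must also check the endpoint cases: the two parents of any order-$1$ slope are $0$ and $\pm 1$, and $\pm 1 \notin \F E'$ since $\F E' = \F E \cap (-1,1)$, so the order-$1$ slopes are exactly $0\cdot L$ and $0\cdot R$. Iterating $\pi$ terminates at the root $0$ after $q$ steps, producing the word $\sigma_\gamma \in \Sigma^*$.

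Then I would verify the two compositions. For $\gamma \mapsto \sigma_\gamma \mapsto 0\cdot\sigma_\gamma$: by definition of $\pi$ and the child relation, $\gamma = \pi(\gamma)\cdot s_q = \pi^2(\gamma)\cdot s_{q-1}s_q = \cdots = 0\cdot\sigma_\gamma$, an immediate induction on $q = \ord\gamma$. For $\sigma \mapsto 0\cdot\sigma \mapsto \sigma_{0\cdot\sigma}$: by induction on the length $n$ of $\sigma$, using that appending one letter increases the order by exactly $1$ (since $\gamma\cdot L$ has $\gamma$ as a parent of strictly larger order than the other parent $\pare_L(\gamma)$ of $\gamma$, and similarly for $R$) and that $\pi(\gamma\cdot L) = \gamma$, $\pi(\gamma\cdot R) = \gamma$ — this recovers the last letter and reduces to the shorter word. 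Combined, these give the bijection, and uniqueness of $\sigma_\gamma$ is exactly injectivity.

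The main obstacle I anticipate is the bookkeeping in the structural claim that one parent has order $q-1$ and the other has strictly smaller order — more precisely, that $\pi$ is well-defined and that $\gamma\cdot L$, $\gamma\cdot R$ both have order one greater than $\gamma$ rather than something smaller. This requires knowing that $\ord(\alpha.\beta)$, where $\alpha,\beta$ are consecutive in the tree (i.e. $\{\alpha,\beta\} = \{\pi(\gamma),\text{sibling data}\}$), behaves additively in the expected way; the cleanest route is to track everything at the level of the dyadic rationals under $\varepsilon$, where $\ord(\varepsilon(\frac{p}{2^q})) = q$ for $p$ odd is essentially the definition, and the parent and child operations are transparent arithmetic on $\frac{p}{2^q}$. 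One should also be mildly careful that the problem is genuinely about $\F E'$, not all of $\F E$: translation by integers moves between different ``copies'' of the tree rooted at each integer, and restricting to $(-1,1)$ picks out exactly the subtree rooted at $0$, which is why the root is $0$ and not, say, an interval.
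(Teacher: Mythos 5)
Your overall strategy is sound and is essentially the intended one (the paper labels this an ``easy lemma'' and supplies no proof): transport everything through $\varepsilon$ to the dyadic rationals, where $0\cdot (s_1\cdots s_n)=\varepsilon\left(\sum_{i=1}^n e_i2^{-i}\right)$ with $e_i=+1$ for $R$ and $e_i=-1$ for $L$, so that bijectivity onto $\ZZ\left[\frac{1}{2}\right]\cap(-1,1)$ is the existence and uniqueness of signed binary expansions; your tree-parent induction is an equivalent packaging of the same fact. Two points need attention. First, the parity analysis justifying your key structural claim is garbled: if $\gamma=\varepsilon(p/2^q)$ with $p$ odd, then \emph{both} $p-1$ and $p+1$ are even, so both fractions $\frac{p\pm1}{2^q}$ reduce, and neither parent ``stays at denominator $2^q$ with odd numerator'' --- indeed that would give it order $q$, contradicting your own (correct) assertion one sentence earlier that both parents have order $<q$. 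The correct statement is that after one reduction the numerators become the consecutive integers $\frac{p-1}{2}$ and $\frac{p+1}{2}$, exactly one of which is odd, so exactly one parent has order exactly $q-1$ while the other has order at most $q-2$. This is a one-line repair, but since it is precisely the well-definedness of $\pi$ (and of the last letter of $\sigma_\gamma$), it must be stated correctly. Second, identifying $\F E'=\F E\cap(-1,1)$ with $\varepsilon\left(\ZZ\left[\frac{1}{2}\right]\cap(-1,1)\right)$ --- i.e., that your map surjects onto $\F E'$ and lands nowhere else --- uses that $\varepsilon$ is increasing (equivalently, that $\alpha<\alpha.\beta<\beta$ for the relevant pairs), a fact from Dr\'ezet's work that you should cite explicitly rather than leave implicit in the phrase ``picks out exactly the subtree rooted at $0$.''
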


In light of this correspondence, we extend the notion of parents to strings in $\Sigma^\ast$.  Given $\sigma\in \Sigma^\ast$, we write $\pare_L(\sigma) $ for the sequence corresponding to the left parent of $0\cdot \sigma$; we similarly define $\pare_R(\sigma)$.

The action of $\Sigma^*$ on $\F E'$ can be naturally extended to a function $\F E'\times \Sigma^\N \to C'$.  Specifically, for $\gamma\in \F E'$ and $\sigma\in \Sigma^\N$ we define $\gamma \cdot \sigma$ to be the point in the Cantor set associated to the concatenated left-right sequence $\sigma_\gamma \sigma$; in symbols, $$\gamma\cdot \sigma = x_{\sigma_\gamma \sigma}.$$  As a special case, we note the identity $$0 \cdot \sigma=x_\sigma.$$  This action has a natural continuity property: if $\sigma|_n$ denotes the length $n$ initial segment of $\sigma$, then $$ \gamma \cdot \sigma = \lim_{n\to\infty} \gamma \cdot \sigma|_n$$ in the real numbers.  The key point is that the exceptional slope $0\cdot \sigma|_n$ lies in the same component of $C_{n-1}$ as $x_\sigma$ does for each $n$.

\begin{example}
Consider the right endpoint $\alpha+x_\alpha$ of an interval $I_\alpha$, where $\alpha\in \F E'$.  Using a bar to denote an infinite repeating block, we have $$\alpha + x_\alpha = \alpha\cdot R\overline{L}.$$ Similarly, $$\alpha - x_\alpha = \alpha \cdot L\overline{R}.$$ It follows that the endpoints of intervals $I_\alpha$ correspond precisely to left-right sequences  $\sigma \in \Sigma^{\N}$ which are \emph{eventually constant}.
\end{example}

To prove the theorem, we will show that if the left-right sequence $\sigma_x$ of an $x\in C''$ is not eventually constant, then its continued fraction expansion is not eventually repeating.

\begin{remark}
If $x\in C''=C\cap (0,\frac{1}{2})$, then the word $\sigma_x\in \Sigma^\N$ begins with $RL$. Conversely, if $\sigma\in \Sigma^\N$ is a word beginning with $RL$, then $0\cdot \sigma$ lies in $C''$.  Similarly, if $\gamma\in \F E'' = \F E\cap (0,\frac{1}{2})$, then the word $\sigma_\gamma\in \Sigma^\ast$ begins with $RL$, and any $\sigma\in \Sigma^\ast$ beginning with $RL$ has $0\cdot \sigma \in\F E''.$
\end{remark}

\begin{example}\label{parentExample}
Let $\gamma\in \F E''$.  It is easy to describe $\pare_L(\sigma_\gamma)$ and $\pare_R(\sigma_\gamma)$ in terms of $\sigma_\gamma$.  Since $\gamma\in \F E''$, the sequence $\sigma_\gamma$ begins with $RL$, and it is  not a constant sequence.  We can  write either $$\sigma_\gamma = \sigma' RL^n \qquad \textrm{or} \qquad \sigma_\gamma = \sigma' L R^n$$ for some word $\sigma'\in \Sigma^\ast$ and some $n\geq 1$.  In the first case, where $\sigma_\gamma$ ends in $L$, we have $$\pare_L(\sigma_\gamma)= \sigma' \qquad \textrm{and} \qquad \pare_R(\sigma_\gamma)=\sigma'RL^{n-1}.$$ Analogously, if $\sigma_\gamma$ ends in $R$, then $$\pare_L(\sigma_\gamma) = \sigma' L R^{n-1} \qquad \textrm{and} \qquad \pare_R(\sigma_\gamma) = \sigma'.$$
\end{example}

One qualitative consequence of the example is important enough to single out.

\begin{corollary}\label{truncationCor}
If $\gamma\in \F E''$, then the parent slopes of $\gamma$ have associated left-right sequences which are initial segments of $\sigma_\gamma$.
\end{corollary}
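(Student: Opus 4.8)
\textbf{Proof plan for Corollary \ref{truncationCor}.}

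The statement follows directly by combining the preceding Example \ref{parentExample} with the continuity/continuation structure set up for the action of $\Sigma^\ast$ on $\F E'$. The plan is to unwind the definitions so that the assertion becomes the observation that, in both cases of Example \ref{parentExample}, the left-right sequences $\sigma'$, $\sigma' R L^{n-1}$, and $\sigma' L R^{n-1}$ all agree with $\sigma_\gamma = \sigma' R L^n$ (resp. $\sigma_\gamma = \sigma' L R^n$) on an initial segment.

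First I would recall from the lemma that each $\gamma \in \F E'$ has a unique finite left-right sequence $\sigma_\gamma$ with $\gamma = 0\cdot \sigma_\gamma$, and that by definition the associated sequences of $\pare_L(\gamma)$ and $\pare_R(\gamma)$ are $\pare_L(\sigma_\gamma)$ and $\pare_R(\sigma_\gamma)$. Next, since $\gamma\in \F E''$, the sequence $\sigma_\gamma$ begins with $RL$ and is not constant, so Example \ref{parentExample} applies and writes $\sigma_\gamma$ either as $\sigma' R L^n$ or as $\sigma' L R^n$ for some $\sigma' \in \Sigma^\ast$ and $n\geq 1$. In the first case $\pare_L(\sigma_\gamma) = \sigma'$, which is literally a prefix of $\sigma' R L^n = \sigma_\gamma$; and $\pare_R(\sigma_\gamma) = \sigma' R L^{n-1}$, which is also a prefix of $\sigma' R L^n = \sigma_\gamma$. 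The second case is symmetric: $\pare_R(\sigma_\gamma) = \sigma'$ is a prefix of $\sigma' L R^n$, and $\pare_L(\sigma_\gamma) = \sigma' L R^{n-1}$ is a prefix of $\sigma' L R^n$. In every case both parent sequences are initial segments of $\sigma_\gamma$, which is the claim.

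There is essentially no obstacle here: the corollary is a purely bookkeeping consequence of the explicit description of parents in Example \ref{parentExample}, once one notes that $L^{n-1}$ (resp. $R^{n-1}$) is an initial segment of $L^n$ (resp. $R^n$). The only thing to be slightly careful about is the boundary case $n = 1$, where $\pare_R(\sigma_\gamma) = \sigma' R L^0 = \sigma' R$ (resp. $\pare_L(\sigma_\gamma) = \sigma' L$), which is still visibly a prefix of $\sigma' R L$ (resp. $\sigma' L R$); so the statement holds uniformly. One could also phrase the whole argument more conceptually using the continuity identity $\gamma\cdot\sigma = \lim_{n\to\infty}\gamma\cdot\sigma|_n$, observing that passing to a parent only truncates the trailing repeated block, but the direct prefix check via Example \ref{parentExample} is the cleanest route.
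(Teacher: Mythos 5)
Your proof is correct and follows exactly the paper's route: the paper presents this corollary as an immediate qualitative consequence of Example \ref{parentExample}, and your case-by-case prefix check (including the $n=1$ boundary case) is precisely the verification that makes that implication explicit.
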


\subsection{Continued fractions}\label{subsec-continued} The continued fraction expansions of exceptional slopes have a remarkable  inductive structure, which we recall here.

Given real numbers $a_0,a_1,\ldots,a_k$, we write $[a_0;a_1,\ldots,a_k]$ for the continued fraction $$[a_0;a_1,\ldots,a_k] := a_0 + \cfrac{1}{a_1+
  \cfrac{1}{\ddots \raisebox{-1.2ex}{${}+\cfrac{1}{a_k}$}}}
 $$ assuming this expression is defined.  Any rational number $\alpha\in \Q$ has two finite continued fraction expansions as above where $a_0\in \Z$ and all the $a_i$ with $i>0$ are positive integers.  One of them $\alpha = [a_0;a_1,\ldots,a_k]$ has $a_k=1$, and the other is given by $\alpha = [a_0;a_1,\ldots,a_{k-2},a_{k-1}+1].$

For convenience, we work with the set $\F E''$ of exceptional slopes in the interval $(0,\frac{1}{2})$.  If $\alpha \in \F E''$ and $\alpha = [a_0;a_1,\ldots,a_k]$, then clearly $a_0=0$.  It is useful to consider the string of integers $a_1a_2\cdots a_k \in (\N_{>0})^*$.   We write $\alpha^e \in (\N_{>0})^*$ for the \emph{even} length string $a_1\cdots a_k$ of natural numbers such that $\alpha = [0;a_1,\ldots,a_k]$.  Similarly, we write $\alpha^o\in (\N_{>0})^*$ for the \emph{odd} length string with this property.

\begin{theorem}[{\cite[Theorem 3.2]{HuizengaPaper2}}]\label{cfracThm}
Let $\gamma\in \F E'' = \F E \cap (0,\frac{1}{2})$ be an exceptional slope, and put $\alpha = \pare_L(\gamma)$ and $\beta = \pare_R(\gamma)$.  Then $$\gamma^e=\beta^o 2\alpha^e.$$ That is, the even length continued fraction expansion of $\gamma$ is obtained from the odd length expansion of $\beta$ by concatenating a $2$ and the even length expansion of $\alpha$.

Furthermore,
\begin{enumerate}
\item every number in $\gamma^e$ and $\gamma^o$ is a $1$ or a $2$, so that these words actually are in $\{1,2\}^*$, and
\item the word $\gamma^e$ is a palindrome.
\end{enumerate}
\end{theorem}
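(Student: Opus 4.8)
The plan is to prove all three assertions simultaneously by strong induction on the order $q$ of $\gamma$, translating the concatenation claim $\gamma^e=\beta^o\,2\,\alpha^e$ into an arithmetic identity relating $(r_\alpha,c_1(E_\alpha))$, $(r_\beta,c_1(E_\beta))$ and $(r_\gamma,c_1(E_\gamma))$ via the matrix calculus of continued fractions.

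First I would fix the standard dictionary: to a finite word $w=a_1\cdots a_k\in(\N_{>0})^\ast$ assign $M(w)=\left(\begin{smallmatrix}a_1&1\\1&0\end{smallmatrix}\right)\cdots\left(\begin{smallmatrix}a_k&1\\1&0\end{smallmatrix}\right)$, whose two columns record the last two convergents of $[0;a_1,\dots,a_k]$, so that concatenation of words corresponds to multiplication of matrices and $\det M(w)=(-1)^k$. In this language the seed data are $M(0^e)=I$, $M((1/2)^e)=M(11)$, $M((1/2)^o)=M(2)$; the odd-length expansion $\beta^o$ is obtained from $\beta^e$ by the elementary rule $[\dots,2]=[\dots,1,1]$, so it is again a $\{1,2\}$-word with convergent data explicit in that of $\beta^e$; and the first claim of the theorem becomes the matrix equation $M(\gamma^e)=M(\beta^o)\left(\begin{smallmatrix}2&1\\1&0\end{smallmatrix}\right)M(\alpha^e)$. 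The left column of $M(\alpha^e)$ is $(r_\alpha,c_1(E_\alpha))^{\mathsf T}$ and its right column is the previous convergent of $\alpha$ (pinned down by $\det M(\alpha^e)=1$ up to the familiar even/odd ambiguity), and likewise for $M(\beta^o)$ and $M(\gamma^e)$; since $|\beta^o|+1+|\alpha^e|$ is even, lengths and determinants are automatically consistent, so that once the right-hand side is computed the fraction $c_1(E_\gamma)/r_\gamma$ it produces is forced to be in lowest terms.

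The arithmetic core is then a single identity: multiplying out $M(\beta^o)\left(\begin{smallmatrix}2&1\\1&0\end{smallmatrix}\right)M(\alpha^e)$ and comparing its left column with $(r_\gamma,c_1(E_\gamma))^{\mathsf T}$, one must check that the rational number so produced equals $\alpha.\beta$, using only the defining formula $\alpha.\beta=\frac{\alpha+\beta}{2}+\frac{\Delta_\beta-\Delta_\alpha}{3+\alpha-\beta}$ together with $\Delta_\alpha=\tfrac12(1-r_\alpha^{-2})$. This is not possible treating $\alpha$ and $\beta$ as unrelated exceptional slopes; the point is that the parents of $\gamma$ are \emph{nested} — by Example~\ref{parentExample} and Corollary~\ref{truncationCor}, when $\sigma_\gamma$ ends in $R$ one has $\pare_R(\alpha)=\beta$, and when it ends in $L$ one has $\pare_L(\beta)=\alpha$ — so the inductive hypothesis applied to $\alpha$ (resp. $\beta$) already supplies the relation $\alpha^e=\beta^o\,2\,(\pare_L\alpha)^e$ (resp. the analogous relation for $\beta^e$) that ties the two data sets together. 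With this, the displayed identity reduces to a finite computation, and property~(1) is immediate because $\alpha^e$ and $\beta^o$ are $\{1,2\}$-words and the interpolated letter is a $2$.

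I expect the palindrome property~(2) to be the main obstacle, since it is genuinely not formal from "$\alpha^e$ and $\beta^e$ are palindromes": one must combine the first claim $\gamma^e=\beta^o\,2\,\alpha^e$ with the nesting relations above and a reversal argument — iterating $\alpha^e=\beta^o\,2\,(\pare_L\alpha)^e$ down the chain of left parents of $\gamma$ (and symmetrically on the right), so as to exhibit $\gamma^e$ as a word manifestly fixed by reversal — and pinning down the correct strengthened induction hypothesis that makes this iteration close is the delicate step. A secondary nuisance is the bookkeeping of signs and of even-versus-odd expansions in the determinant and previous-convergent computations; the continued-fraction machinery itself, and the Euler–Lagrange periodicity criterion invoked elsewhere in this section, are classical.
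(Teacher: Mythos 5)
Your proposal is being measured against the statement alone: the paper does not prove Theorem \ref{cfracThm} but quotes it from \cite[Theorem 3.2]{HuizengaPaper2}. The skeleton you set up is sound — strong induction on the order of $\gamma$, the transfer-matrix dictionary $w\mapsto M(w)$ under which concatenation of words becomes multiplication of matrices, the identification of the left column of $M(\gamma^e)$ with $(r_\gamma,c_1(E_\gamma))^{\mathsf T}$, and the nesting relations $\pare_R(\alpha)=\beta$ (resp.\ $\pare_L(\beta)=\alpha$) extracted from Example \ref{parentExample}, which are correct and are indeed what ties the two parents together. But the two load-bearing steps are left open, and they are entangled in a way the proposal does not resolve.

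First, the ``finite computation'' at the arithmetic core is not one you can do from the data you have assembled. Multiplying out $M(\beta^o)\left(\begin{smallmatrix}2&1\\1&0\end{smallmatrix}\right)M(\alpha^e)$ requires the \emph{second} columns of $M(\alpha^e)$ and $M(\beta^o)$, i.e.\ the penultimate convergents; the determinant condition fixes these only modulo the first column, and the explicit values you need — for instance that the penultimate denominator of $\beta^o$ equals $r_\beta-c_1(E_\beta)$ — are precisely equivalent to the palindromicity of $\beta^e$ (a word is a palindrome if and only if its transfer matrix is symmetric, since each factor is symmetric and reversal transposes the product). So assertion (2) is not a separate final step: it is an input to the concatenation identity one level up and must be carried in the induction hypothesis. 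Second, once it is fed in, matching the product against $(r_\gamma,c_1(E_\gamma))$ and requiring the product to be symmetric reduces, after using $\Delta_\alpha=\tfrac12(1-r_\alpha^{-2})$ and $r_{\alpha.\beta}=r_\alpha r_\beta(3+\alpha-\beta)$, to the single Markov-type identity $r_\alpha^2+r_\beta^2+D^2=3r_\alpha r_\beta D$ with $D=r_\alpha r_\beta(\beta-\alpha)$; this relation between the \emph{parents} is the missing induction hypothesis, propagated through the nesting relations by the mutation $(x,y,z)\mapsto(x,y,3xy-z)$ on solutions of the Markov equation. You neither state nor prove it, and without it neither $\gamma^e=\beta^o2\alpha^e$ nor the palindrome property is established; your own phrasing for (2) — that ``pinning down the correct strengthened induction hypothesis that makes this iteration close is the delicate step'' — concedes that the induction does not currently close. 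The word-reversal scheme you sketch is harder to push through than the matrix-symmetry formulation above, and in either form the absent content is exactly this arithmetic relation. As written, the proposal is a plausible plan whose two essential verifications are missing.
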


By Corollary \ref{truncationCor}, the left-right sequences of the parents of $\gamma$ are initial segments of the left-right sequence of $\gamma$.  This fact makes it very easy to compute the continued fraction of an exceptional slope from its left-right sequence.  As a basis for the inductive algorithm, one uses that $0^e$ is the empty string and $(\frac{1}{2})^o$ is the string $2$.

\begin{example}\label{cfracEx}
Consider the word $RLLLRR$ and the corresponding exceptional slope $\gamma = 0\cdot RLLLRR$.  We  compute the continued fraction expansion of $\gamma$ by computing the continued fraction expansions of the exceptional slopes corresponding to initial segments.  

\begin{center}
\begin{longtable}{ccccc}
\toprule 
$\sigma$ & $\pare_L(\sigma)$ & $\pare_R(\sigma)$ & $(0\cdot \sigma)^e$ & $(0\cdot \sigma)^o$ \\\midrule
\endfirsthead
\multicolumn{5}{l}{{\small \it continued from previous page}}\\
\toprule
$\sigma$ & $\pare_L(\sigma)$ & $\pare_R(\sigma)$ & $(0\cdot \sigma)^e$ & $(0\cdot \sigma)^o$ \\\midrule \endhead
\bottomrule \multicolumn{5}{r}{{\small \it continued on next page}} \\ \endfoot
\bottomrule
\endlastfoot

$\emptyset$ & N/A& N/A & $\emptyset$ & N/A\\
$R$ & $\emptyset$ & N/A & $11$ & $2$\\
$RL$ & $\emptyset$ & $R$ & $22$ & $211$\\
$RLL$ & $\emptyset$ & $RL$ & $2112$ & $21111$\\
$RLLL$ & $\emptyset$ & $RLL$ & $211112$ & $2111111$\\
$RLLLR$ & $RLLL$ & $RLL$ & $211112211112$ & $2111122111111$\\
$RLLLRR$ & $RLLLR$ & $RLL$ & $211112211112211112$ & $2111122111122111111$\\
\end{longtable}
\end{center}
We summarize our various points of view of $\gamma$ by concluding \begin{align*}\gamma &= 0 \cdot RLLLRR \\&= \varepsilon\left(\frac{1}{2}-\frac{1}{4}-\frac{1}{8}-\frac{1}{16}+\frac{1}{32}+\frac{1}{64}\right) \\&= [0;2,1,1,1,1,2,2,1,1,1,1,2,2,1,1,1,1,2]\\&= \frac{19760}{51641}.  \end{align*}
\end{example}

\begin{example}
It is also easy to systematically compute the continued fraction expansions of all exceptional slopes with order at most $k$ for any fixed $k$.  Figure \ref{figure-slope} shows the computation of the slopes in $[0, \frac{1}{2}]$ up to order 5. In the diagram, the order increases from left to right. Except for those of order 5, we have recorded both the even and odd length continued fraction expansions of the exceptional slopes. We always write the odd length expansion above the even length expansion. To avoid cluttering the diagram,  we have only recorded the even length expansions for slopes of order 5. To find the continued fraction of an exceptional slope $\alpha$, trace its left-right sequence along the solid lines, starting from the expansion $\emptyset$ of $0$.

\begin{figure}[htbp]
\begin{center}
\input{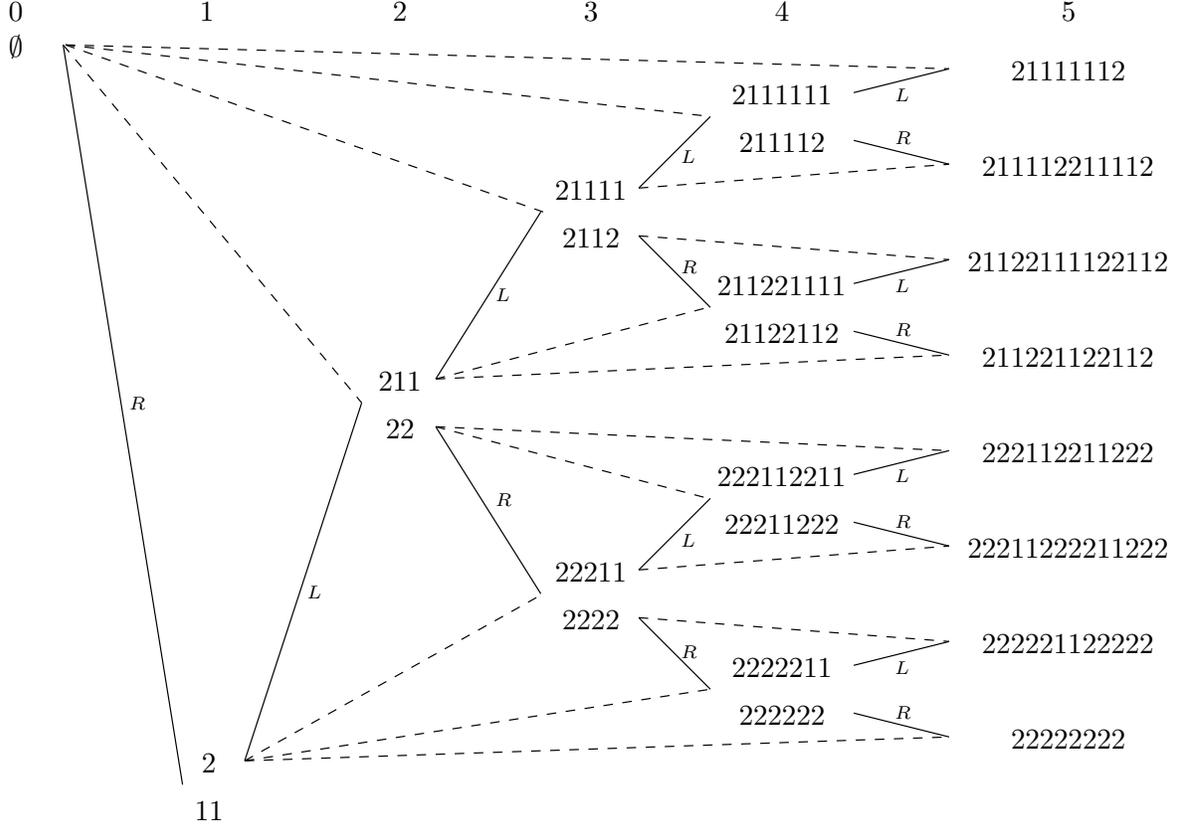}
\end{center}
\caption{The continued fraction expansions of exceptional slopes of order at most $5$.  Orders are listed at the top of the diagram.}
\label{figure-slope}
\end{figure}
\end{example}

\subsection{Repeating structures} We say a word $\sigma = a_1a_2\ldots a_{k}$ in some alphabet $S$ has \emph{period $p>0$} if $a_i = a_{i+p}$ for all $1\leq i\leq k-p$.  In this case, $\sigma$ is the length $k$ initial segment of the infinite word $\overline{a_1\ldots a_p}$.

\begin{lemma}\label{repeatingLemma}
Let $\sigma\in S^*$ have length $k$, and suppose $p$ is the smallest period of $\sigma$.  If $\sigma$ has another period $p'$ and $p+p' \leq k$, then $p$ divides $p'$.
\end{lemma}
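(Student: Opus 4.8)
This lemma is a form of the Fine--Wilf theorem, and I would prove it directly by a ``difference of periods'' argument combined with induction, which is essentially the Euclidean algorithm applied to $p$ and $p'$. Write $\sigma = a_1 a_2 \cdots a_k$. First reduce to the interesting case: if $p' \leq p$, then since $p'$ is a period and $p$ is the \emph{smallest} period we must have $p' = p$, so $p \mid p'$ trivially. Hence we may assume $p < p'$, and in particular $p' - p \geq 1$.

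The heart of the argument is to show that $\sigma$ also has period $p' - p$. Fix an index $i$ with $1 \leq i \leq k - (p'-p)$; we must show $a_i = a_{i+p'-p}$. Split into two cases. If $i \leq k - p'$, then first apply the period $p$ (forward, legal since $i + p' - p \leq k - p$) to get $a_{i+p'-p} = a_{i+p'}$, and then apply the period $p'$ (backward, legal since $i \leq k - p'$) to get $a_{i+p'} = a_i$. If instead $i > k - p'$, then apply the period $p$ backward: this is legal because $i - p \geq (k - p') + 1 - p = k - (p+p') + 1 \geq 1$, where we used the hypothesis $p + p' \leq k$; so $a_i = a_{i-p}$. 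Now apply the period $p'$ forward to $a_{i-p}$, legal since $i - p \leq k - p' + p - p = k - p'$; this gives $a_{i-p} = a_{i-p+p'} = a_{i+p'-p}$. In either case $a_i = a_{i+p'-p}$, so $p'-p$ is a period of $\sigma$.

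Now induct on $p'$. The base case $p' = p$ was disposed of above. For the inductive step, $\sigma$ has smallest period $p$ and also period $p' - p$, and $p + (p' - p) = p' \leq k - p \leq k - 1 \leq k$, so the pair $(p, p'-p)$ satisfies the hypotheses of the lemma. Since $p$ is the smallest period, we cannot have $0 < p' - p < p$, so $p' - p \geq p$; applying the inductive hypothesis to the periods $p$ and $p' - p$ (note $p' - p < p'$) gives $p \mid (p' - p)$, hence $p \mid p'$, as desired.

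The only place requiring genuine care is the second paragraph: one must verify that every application of a period keeps all indices inside $\{1,\dots,k\}$, and this is precisely where the hypothesis $p + p' \leq k$ is consumed. Everything else is bookkeeping. (Alternatively, one could simply invoke the Fine--Wilf theorem: since $p + p' - \gcd(p,p') \leq p + p' \leq k$, the word $\sigma$ has period $\gcd(p,p')$, and minimality of $p$ forces $p = \gcd(p,p')$, so $p \mid p'$.)
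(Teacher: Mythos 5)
Your proof is correct. Where you differ from the paper is in how you run the Euclidean algorithm: the paper writes $p' = pq + r$ with $0 < r < p$ and, in a single chain of index shifts $a_i = a_{i-mp} = a_{i-mp+p'} = a_{i+r}$, exhibits $r$ as a period, contradicting minimality of $p$; you instead prove the single subtraction step (that $p'-p$ is a period) and then induct, i.e.\ you do division-with-remainder by repeated subtraction. Both arguments consume the hypothesis $p+p'\leq k$ in exactly one place --- the paper to guarantee $i-mp+p'\leq k$, you to guarantee $i-p\geq 1$ in your second case --- and your verification that the inductive hypothesis reapplies (namely $p+(p'-p)\leq k$) is the analogue of the paper's observation that all intermediate indices stay in range. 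The paper's version is shorter since it avoids the induction; yours isolates the ``difference of two periods is a period'' step, which is the cleaner reusable statement and makes the bookkeeping of index ranges more transparent. Your closing remark that the full Fine--Wilf theorem (with the $-\gcd(p,p')$ improvement) would also suffice is accurate, though the weaker bound is all that is needed here.
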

\begin{proof}
Suppose $p$ does not divide $p'$, and write $p' = pq+r$ with $0<r<p$.  We claim $\sigma$ has period $r$, violating the minimality of $p$.  Write $\sigma = a_1\ldots a_k$, and let $i$ be an index with $1\leq i \leq k-r$.  There is some $m\geq 0$ such that $1 \leq i-mp \leq p$.  Then using the two periods of $\sigma$, $$a_i = a_{i-mp} = a_{i-mp+p'}=a_{i-mp+p'-(q-m)p}=a_{i+r},$$ where the hypothesis $p+p'\leq k$ ensures $i-mp+p' \leq k.$ 
\end{proof}

Many exceptional slopes have continued fraction expansions exhibiting a simple repeating structure.

\begin{proposition}\label{repeatingStructProp}
Let $\gamma\in \F E''$ be an exceptional slope such that $\sigma_\gamma$ ends in an $R$.  Write $$\sigma_{\gamma} = \sigma' LR^n.$$ If $\beta = \pare_R(\gamma)$ and $\alpha = \pare_L(\beta)$ (caution: $\alpha$ is the parent of $\beta$, not of $\gamma$), then $$\gamma^e = (\beta^o2)^{n+1} \alpha^e.$$ The word $\alpha^e$ is an initial segment of $\beta^o$.  Thus the length of $\beta^o2$ is a period of $\gamma^e$.
 
 Furthermore, unless $\beta = \frac{1}{2}$, the length of $\beta^o2$ is the smallest period of $\gamma^e$.  If $\beta = \frac{1}{2}$, then $\gamma^e = 2^{2n+2}$.
\end{proposition}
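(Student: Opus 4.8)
The plan is to exploit the recursion $\gamma^e = \beta^o 2 \alpha^e$ from Theorem \ref{cfracThm} repeatedly, tracking how the left-right sequence records the parent structure. First I would set up the induction on $n$. Write $\sigma_\gamma = \sigma' LR^n$. By Example \ref{parentExample}, since $\sigma_\gamma$ ends in $R$, we have $\pare_R(\sigma_\gamma) = \sigma' LR^{n-1}$ and $\pare_L(\sigma_\gamma) = \sigma' L R^{n-1} = \sigma_\beta$ when $n\geq 1$ — wait, more carefully: $\beta = \pare_R(\gamma)$ has $\sigma_\beta = \sigma' LR^{n-1}$, and then applying the example to $\beta$ (which again ends in $R$ if $n\geq 2$, or ends in $L$ if $n=1$) identifies $\pare_L(\beta)$. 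In either case one checks $\alpha = \pare_L(\beta)$ has $\sigma_\alpha = \sigma' $ or $\sigma' L R^{n-2}$-type prefix; the key structural fact I would extract is $\gamma = \beta \cdot R$ and $\beta = \beta' \cdot R$-type relations so that Theorem \ref{cfracThm} gives $\gamma^e = \beta^o 2 \beta_{\mathrm{parent}}^e$ at each stage. Unwinding $n+1$ times down to the base case yields $\gamma^e = (\beta^o 2)^{n+1}\alpha^e$, provided that at the bottom of the recursion the "left parent" stabilizes to $\alpha = \pare_L(\beta)$; this is exactly the point where one uses that the $R$'s are being stripped one at a time and the left parent of $\beta\cdot R$ is $\beta$ itself while the left parent of $\beta$ stays fixed as $\alpha$.

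Next I would verify that $\alpha^e$ is an initial segment of $\beta^o$. Since $\alpha = \pare_L(\beta)$ and $\beta\in \F E''$ (or $\beta = \tfrac12$), apply Theorem \ref{cfracThm} to $\beta$: writing $\alpha' = \pare_L(\beta)$, $\beta' = \pare_R(\beta)$ we get $\beta^e = (\beta')^o 2 \alpha^e$, and the odd-length expansion $\beta^o$ differs from $\beta^e$ only by the standard continued-fraction tail modification $[\,\ldots,a_k]\leftrightarrow[\ldots,a_{k-1}+1,1]$ or $[\ldots,a_k+1]$ applied at the far end; since $\alpha^e$ sits at the \emph{front} of $\beta^e$, it is untouched and hence is an initial segment of both $\beta^e$ and $\beta^o$. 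Consequently the block $\beta^o 2$ inside $(\beta^o 2)^{n+1}\alpha^e$ repeats consistently through the trailing $\alpha^e$, so $|\beta^o 2|$ is a genuine period of $\gamma^e$ in the sense of \S\ref{subsec-continued}.

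For the minimality claim I would argue by contradiction using Lemma \ref{repeatingLemma}. Suppose $\gamma^e$ has a period $p' < p := |\beta^o 2|$. Since $\gamma^e = (\beta^o 2)^{n+1}\alpha^e$ has length $(n+1)p + |\alpha^e| \geq p + p'$ (here $n\geq 1$ is used), Lemma \ref{repeatingLemma} forces $p \mid p'$, impossible as $0 < p' < p$. So the only escape is that $\beta^o 2$ already has a nontrivial period, i.e. $\beta^o 2$ is itself a proper power; I would rule this out by the palindrome/parity structure — $\beta^o 2$ ends in $2$ and, by Theorem \ref{cfracThm}(2) applied appropriately, the relevant even-length words are palindromes beginning and ending in $2$, and a short combinatorial argument on $\{1,2\}$-words shows $\beta^o 2$ is primitive unless $\beta = \tfrac12$, where $\beta^o = 2$ and indeed $\gamma^e = 2^{2n+2}$ by direct substitution ($\alpha$ then being $0$ with $\alpha^e$ empty). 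I expect this last primitivity check — showing $\beta^o 2$ is not a proper power except in the $\beta = \tfrac12$ case — to be the main obstacle, since it is the one step not handed to us directly by the recursion and requires genuinely using the structure of continued fractions of exceptional slopes rather than just formal manipulation; everything else is bookkeeping with Example \ref{parentExample} and Theorem \ref{cfracThm}.
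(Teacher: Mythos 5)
Your route is the paper's --- iterate Theorem \ref{cfracThm} along the strings $\sigma_\beta L R^j$ and reduce minimality of the period to primitivity of the word $\beta^o 2$ via Lemma \ref{repeatingLemma} --- but three steps as written are wrong or missing. First, you misidentify the parents: for $\sigma_\gamma=\sigma'LR^n$, Example \ref{parentExample} gives $\pare_R(\sigma_\gamma)=\sigma'$ and $\pare_L(\sigma_\gamma)=\sigma'LR^{n-1}$, so $\sigma_\beta=\sigma'$, not $\sigma'LR^{n-1}$. This is not cosmetic: run the recursion on $\sigma_\gamma=RLR$ (so $\gamma=\frac{12}{29}=[0;2,2,2,2]$ and $\gamma^e=2222$). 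The correct $\beta=0\cdot R=\frac12$ gives $(\beta^o2)^2\alpha^e=(22)^2=2222$, while your $\beta=0\cdot RL=\frac25$ gives $(2112)^2$, which has the wrong length. The correct engine is that $\pare_R$ is constant, equal to $\beta=0\cdot\sigma'$, along the whole chain $\sigma_\beta L,\ \sigma_\beta LR,\ \dots,\ \sigma_\beta LR^n$, while $\pare_L$ of each term is the previous term; hence each application of Theorem \ref{cfracThm} prepends the same block $\beta^o2$, starting from $(0\cdot\sigma_\beta L)^e=\beta^o2\alpha^e$.

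Second, your justification that $\alpha^e$ is an initial segment of $\beta^o$ rests on the false claim that ``$\alpha^e$ sits at the front of $\beta^e$.'' Theorem \ref{cfracThm} gives $\beta^e=(\pare_R(\beta))^o\,2\,\alpha^e$, so $\alpha^e$ is the \emph{tail} of $\beta^e$ --- precisely the end that the even/odd modification disturbs. The missing ingredient is Theorem \ref{cfracThm}(2): since $\alpha^e$ and $\beta^e$ are both palindromes, $\alpha^e$ being a proper suffix of $\beta^e$ forces it to be a proper prefix, and only then does replacing the final $2$ of $\beta^e$ by $11$ leave it untouched. Third, you correctly isolate the primitivity of $\beta^o2$ (for $\beta\neq\frac12$) as the crux, but you only promise ``a short combinatorial argument.'' That argument is the real content of the proposition: one supposes $\beta^o2=\delta^k$ with $k\geq2$, observes that $\delta$ must begin with $2$ and end with $112$, rewrites $\beta^e=\delta^{k-1}\delta'$ where $\delta'$ is $\delta$ with its terminal $112$ replaced by $2$, and contradicts the palindromicity of $\beta^e$ by comparing a $1$ and a $2$ in mirror positions. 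Without that step the minimality claim is unproved.
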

\begin{proof}
The proof is an analysis of the continued fraction algorithm.  We have $\sigma_\beta = \sigma'$ as in Example \ref{parentExample}.  Observe that $\pare_L(\sigma_\beta L) = \sigma_\alpha$ and $\pare_R(\sigma_\beta L) = \sigma_\beta$, while for $n\geq 1$ we have $$\pare_L(\sigma_\beta LR^{n}) = \sigma_\beta L R^{n-1} \qquad \textrm{and} \qquad \pare_R(\sigma_\beta LR^n) = \sigma_\beta.$$ Using this data one computes the continued fraction expansion of $\gamma$ using Theorem \ref{cfracThm} as in the following table.
\begin{center}
\begin{tabular}{ccccc}
\toprule 
$\sigma$ & $\pare_L(\sigma)$ & $\pare_R(\sigma)$ & $(0\cdot \sigma)^e$ & $(0\cdot \sigma)^o$ \\\midrule
$\sigma_\beta$ & $\sigma_\alpha$  &  & $\beta^e$ & $\beta^o$\\
$\sigma_\beta L$ & $\sigma_\alpha$ & $\sigma_\beta$ & $\beta^o2 \alpha^e$ & \\
$\sigma_\beta L R$ & $\sigma_\beta L$ & $\sigma_\beta$ & $(\beta^o2)^2\alpha^e$\\
$\sigma_\beta L RR$ & $\sigma_\beta LR$ & $\sigma_\beta$ & $(\beta^o2)^3\alpha^e$\\
$\vdots$ & $\vdots$ & $\vdots $ & $\vdots$\\
$\sigma_\beta L R^n$ & $\sigma_\beta LR^{n-1}$ & $\sigma_\beta$ & $(\beta^o2)^{n+1}\alpha^e$\\ \bottomrule
\end{tabular}
\end{center}

Let us show that the tail $\alpha^e$ of the continued fraction expansion of $\gamma$ is an initial segment of $\beta^o$.  If $\beta = \frac{1}{2}$, then $\alpha = 0$ and there is nothing to prove, so assume $0<\alpha < \beta < \frac{1}{2}$. In this case, we recall $$\beta = \alpha.\pare_R(\beta).$$ Therefore, $$\beta^e=(\pare_R(\beta))^o2\alpha^e.$$ The words $\alpha^e$ and $\beta^e$ are both palindromes (Theorem \ref{cfracThm}), so $\alpha^e$ is a \emph{proper} initial segment of $\beta^e$.  Since $0< \beta < \frac{1}{2}$ the first digit in $\beta^e$ is a $2$, and thus its last digit is also a $2$.  Then $\beta^o$ is obtained from $\beta^e$ by replacing its final digit with $11$, and we conclude $\alpha^e$ is also an initial segment of $\beta^o$.

Suppose $p$ is the length of the smallest period of $\gamma^e$.  By Lemma \ref{repeatingLemma}, $p$ divides the length of $\beta^o2$.  If $p$ is smaller than this length, we can write $\beta^o2$ as a concatenation of $k\geq 2$ copies of some word $\delta$: $$\beta^o2 = \delta^k.$$ Under the assumption $0<\beta < \frac{1}{2}$, the expansion $\beta^e$ is a palindrome beginning and ending in $2$; write $$\beta^e = 2 b_2b_3\ldots b_{l-1} 2.$$ Then $$\beta^o 2 = 2 b_2 b_3 \ldots b_{l-1}112.$$ The word $\delta$ must then begin with $2$ and end in $112$, so be of the form $$\delta = 2 d_2\ldots d_{m-3}112.$$ Defining $$\delta' = 2 d_2\ldots d_{m-3}2,$$ we have $$\beta^e = \delta^{k-1} \delta'.$$ Expanding, $$\beta^e = (2 d_2\ldots d_{m-3}\underline{1}12) \delta^{k-2} (\underline{2} d_2\ldots d_{m-3}2).$$ Recalling that $\beta^e$ is a palindrome, we obtain a contradiction by comparing the underlined digits in the previous expression.
\end{proof}

\subsection{Continued fractions of points in $C''$}  Irrational numbers have unique continued fraction expansions.  For an $x\in C''$, the expansion takes the form $[0;a_1,a_2,\ldots]$, with all $a_i\in \{1,2\}$ since $x$ is a limit of exceptional slopes.  We write $x^f$ for the word $a_1a_2\ldots\in \{1,2\}^\N$.

\begin{lemma}
Let $\gamma\in \F E''$, and let $\sigma\in \Sigma^*$ be nonempty.
\begin{enumerate}
\item If the first letter of $\sigma$ is $R$, then $\gamma^e$ is a proper initial segment of $(\gamma\cdot \sigma)^e$.
\item If the first letter of $\sigma$ is $L$, then $\gamma^o$ is a proper initial segment of $(\gamma\cdot \sigma)^e$.
\end{enumerate}
\end{lemma}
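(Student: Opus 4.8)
The plan is to prove both statements simultaneously by induction on the length of $\sigma$, using Theorem \ref{cfracThm} to control how the continued fraction expansion changes when we append a single letter. The base case is $\sigma$ of length one. If $\sigma = R$, then $\pare_L(\gamma\cdot R) = \gamma$ and $\pare_R(\gamma\cdot R) = \pare_R(\gamma)$, so Theorem \ref{cfracThm} gives $(\gamma\cdot R)^e = (\pare_R(\gamma))^o\, 2\, \gamma^e$; since we need $\gamma^e$ as an \emph{initial} segment rather than a final one, I would instead read the palindrome property: $(\gamma\cdot R)^e$ is a palindrome whose reversal is $\gamma^e\, 2\, (\pare_R(\gamma))^o$, and since $(\gamma \cdot R)^e$ equals its own reversal, $\gamma^e$ is a proper initial segment of $(\gamma\cdot R)^e$. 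If $\sigma = L$, then $\pare_L(\gamma\cdot L) = \pare_L(\gamma)$ and $\pare_R(\gamma\cdot L) = \gamma$, so Theorem \ref{cfracThm} gives $(\gamma\cdot L)^e = \gamma^o\, 2\, (\pare_L(\gamma))^e$, and $\gamma^o$ is a proper initial segment outright. (One should double-check the edge case $\gamma = 0$ or $\pare_R(\gamma) = \tfrac12$ using the conventions that $0^e$ is empty and $(\tfrac12)^o = 2$; the claim still holds since a nonempty word is a proper initial segment of a longer one.)

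For the inductive step, write $\sigma = \tau s$ where $s\in\{L,R\}$ is the \emph{last} letter and $\tau$ is the (possibly empty) prefix. If $\tau$ is empty we are in the base case, so assume $\tau$ is nonempty with first letter $t$. Set $\gamma' = \gamma\cdot\tau$. By the base case applied to $\gamma'$ and the single letter $s$: if $s = R$ then $(\gamma')^e$ is a proper initial segment of $(\gamma'\cdot s)^e = (\gamma\cdot\sigma)^e$, and if $s = L$ then $(\gamma')^o$ is a proper initial segment of $(\gamma\cdot\sigma)^e$. Meanwhile, by the inductive hypothesis applied to $\gamma$ and $\tau$: if $t = R$ then $\gamma^e$ is a proper initial segment of $(\gamma')^e$, and if $t = L$ then $\gamma^o$ is a proper initial segment of $(\gamma')^e$. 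The one subtlety is reconciling the parity: in the induction hypothesis the conclusion is phrased in terms of $(\gamma')^e$, but the base-case step applied at the last letter may hand us $(\gamma')^o$ instead when $s = L$. Here I would use that $(\gamma')^o$ and $(\gamma')^e$ agree except that the final $2$ of $(\gamma')^e$ is replaced by $11$ in $(\gamma')^o$ (as in the proof of Proposition \ref{repeatingStructProp}); equivalently, any proper initial segment of $(\gamma')^o$ that stops before its last two digits is also an initial segment of $(\gamma')^e$, and since the words $\gamma^e, \gamma^o$ prepended in the induction hypothesis are shorter than $(\gamma')^e$ (hence than the agreeing portion), transitivity goes through.

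The main obstacle, then, is exactly this parity bookkeeping: ensuring that "proper initial segment" is never destroyed when we pass between the even-length and odd-length expansions of an intermediate slope $\gamma' = \gamma\cdot\tau$. The cleanest way to handle it is to prove once and for all the auxiliary fact that for any $\delta\in\F E''$ and any word $w$ that is a proper initial segment of $\delta^e$ (equivalently of $\delta^o$, as long as $w$ has length at most $|\delta^e| - 1$), $w$ is an initial segment of both $\delta^e$ and $\delta^o$ — this follows immediately from the relation $\delta^o = (\delta^e \text{ with trailing } 2 \text{ replaced by } 11)$ recorded in Theorem \ref{cfracThm}(1)–(2) and used already above. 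With that lemma in hand, every step of the induction reduces to composing two "proper initial segment" relations and invoking transitivity, and the two cases on the first letter $t$ of $\tau$ (which determines whether we start from $\gamma^e$ or $\gamma^o$) simply get carried along unchanged, since the first letter of $\sigma$ equals the first letter of $\tau$.
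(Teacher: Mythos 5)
Your proof is correct and follows essentially the same route as the paper's: the same induction on the length of $\sigma$, the same base case via Theorem \ref{cfracThm} together with the palindrome property of even-length expansions, and the same splitting off of the last letter in the inductive step. The parity issue you isolate is handled in the paper by the one-line observation that a proper initial segment of $(\gamma\cdot\sigma')^e$ is automatically an initial segment of $(\gamma\cdot\sigma')^o$, which is exactly your auxiliary fact.
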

\begin{proof}
Induct on the length $k$ of $\sigma$.  If $\sigma = R$, then $$(\gamma\cdot R)^e = (\gamma . \pare_R(\gamma))^e=(\pare_R(\gamma))^o2\gamma^e.$$ Since $(\gamma\cdot R)^e$ is a palindrome, $\gamma^e$ is a proper initial segment of $(\gamma\cdot R)^e$.  Similarly, if $\sigma = L,$ then $\gamma^o$ is a proper initial segment of $(\gamma\cdot L)^e$.

For the inductive step, without loss of generality assume $\sigma$ begins with $R$ and has length $k+1$.  Let $\sigma' = \sigma|_k$ be the length $k$ initial segment of $\sigma$.  By induction, $\gamma^e$ is a proper initial segment of $(\gamma\cdot \sigma')^e$.  Furthermore, either $(\gamma\cdot \sigma')^e$ or $(\gamma\cdot \sigma')^o$ is a proper initial segment of $(\gamma \cdot \sigma)^e$ by the previous paragraph.  As $\gamma^e$ is a proper initial segment of $(\gamma\cdot \sigma')^e$, it is also a proper initial segment of $(\gamma\cdot \sigma')^o$.  Thus in either case, $\gamma^e$ is a proper initial segment of $(\gamma\cdot \sigma)^e$.
\end{proof}

We then obtain the following result by continuity.  The continued fraction expansion of $x\in C''$ can be understood in terms of the fractions of exceptional slopes approximating it.

\begin{corollary}\label{CantorFracCor} Let $x\in C''$, and let $\sigma_x$ be the left-right sequence corresponding to $x$.  Let $\sigma'$ be an initial segment of $\sigma_x$, and let $\gamma = 0\cdot \sigma'$.
\begin{enumerate}
\item If $\sigma'$ precedes an $R$ in $\sigma$, then $\gamma^e$ is an initial segment of $x^f$.

\item If $\sigma'$ precedes an $L$ in $\sigma$, then $\gamma^o$ is an initial segment of $x^f$.
\end{enumerate}
\end{corollary}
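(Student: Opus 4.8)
The plan is to deduce the corollary from the preceding Lemma by a limiting argument based on the continuity property $\gamma\cdot\tau=\lim_{n\to\infty}\gamma\cdot\tau|_n$ recorded above. I will carry out case (1) in detail; case (2) is identical after replacing $\gamma^e$ by $\gamma^o$ and part (1) of the Lemma by part (2). Since $x\in C''$ the sequence $\sigma_x$ begins with $RL$, which lets me first dispose of the short cases $|\sigma'|\le 1$: in case (1) the only possibility is $\sigma'=\emptyset$, so $\gamma=0$, $\gamma^e$ is the empty word, and there is nothing to prove; in case (2) the only possibility is $\sigma'=R$, $\gamma=\tfrac12$, $\gamma^o=2$, which is an initial segment of $x^f$ because $x\in(0,\tfrac12)$ forces the first partial quotient of $x$ to equal $2$. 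So I may assume $|\sigma'|\ge 2$; then $\sigma'$ begins with $RL$, hence $\gamma=0\cdot\sigma'\in\F E''$ and the Lemma applies to $\gamma$.

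Next, for each integer $n>|\sigma'|$ let $\sigma^{(n)}$ be the length-$n$ initial segment of $\sigma_x$ and write $\sigma^{(n)}=\sigma'\rho_n$ with $\rho_n$ a nonempty word in $\Sigma$. Since $\sigma'$ precedes an $R$ in $\sigma_x$, the first letter of $\rho_n$ is $R$, so the Lemma gives that $\gamma^e$ is a proper initial segment of $(\gamma\cdot\rho_n)^e=(0\cdot\sigma^{(n)})^e$. Writing $\gamma^e=b_1b_2\cdots b_k$ (each $b_i\in\{1,2\}$ by Theorem \ref{cfracThm}), this says precisely that the continued fraction of the exceptional slope $0\cdot\sigma^{(n)}$ has $b_1,\dots,b_k$ as its first $k$ partial quotients; in particular $0\cdot\sigma^{(n)}$ lies in the closed interval whose endpoints are the rational numbers $[0;b_1,\dots,b_k]$ and $[0;b_1,\dots,b_k+1]$.

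Finally I pass to the limit. By the continuity property, $0\cdot\sigma^{(n)}=\gamma\cdot\rho_n\to 0\cdot\sigma_x=x$ as $n\to\infty$, so $x$ lies in that same closed interval. Its endpoints are rational and $x\in C$ is irrational, so $x$ lies in the open interval, and every irrational there has continued fraction expansion beginning with the partial quotients $b_1,\dots,b_k$. Hence $\gamma^e=b_1\cdots b_k$ is an initial segment of $x^f$, which proves (1); case (2) follows in the same way.

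The one point requiring genuine (if standard) care is the last step's use of continued-fraction cylinders: that the reals whose expansion begins with prescribed partial quotients $b_1,\dots,b_k$ form an interval with rational endpoints, and that the irrational points of its closure are exactly the irrationals whose expansion begins with $b_1,\dots,b_k$. This is where irrationality of $x$ — guaranteed because the generalized Cantor set $C$ consists only of irrationals — is essential; everything else is bookkeeping with the Lemma and the continuity property already established.
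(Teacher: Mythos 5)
Your proof is correct and is essentially the paper's argument: the paper dispatches this corollary with the single phrase ``by continuity,'' and what you have written is precisely that argument carried out in full --- apply the preceding lemma to the finite truncations $0\cdot\sigma^{(n)}$, then pass to the limit using $0\cdot\sigma^{(n)}\to x$ together with the standard facts about continued-fraction cylinders and the irrationality of points of $C$. The edge cases $|\sigma'|\le 1$ and the cylinder-interval step are handled correctly, so nothing further is needed.
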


Since even length expansions of exceptional slopes are palindromes, we deduce the next corollary.

\begin{corollary}\label{pallindromeCor}
If $x\in C''$ is not an endpoint of an interval $I_\alpha$, then $x^f$ begins in arbitrarily long palindromes.
\end{corollary}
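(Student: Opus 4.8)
The plan is to combine Corollary \ref{CantorFracCor} with the palindrome property of Theorem \ref{cfracThm}, exploiting that ``not an endpoint'' forces the left-right sequence $\sigma_x$ to contain infinitely many letters $R$.

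First I would recall that the endpoints of the intervals $I_\alpha$ are precisely the points of $C$ whose left-right sequence is eventually constant. Since $x\in C''$ is not such an endpoint, $\sigma_x$ is not eventually constant; as $\sigma_x$ begins with $RL$, it would otherwise be eventually equal to $\overline{L}$, so in fact $\sigma_x$ contains infinitely many letters equal to $R$. Consequently there are infinitely many initial segments $\sigma'$ of $\sigma_x$ which begin with $RL$ and which are immediately followed by an $R$ in $\sigma_x$. For each such $\sigma'$, put $\gamma = 0\cdot\sigma' \in \F E''$. Then by Corollary \ref{CantorFracCor}(1) the word $\gamma^e$ is an initial segment of $x^f$, and by Theorem \ref{cfracThm}(2) it is a palindrome.

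It remains to check that among these exceptional slopes $\gamma$ the lengths $|\gamma^e|$ are unbounded; I would argue this by a finiteness count rather than by tracking lengths step by step. Distinct initial segments $\sigma'$ of $\sigma_x$ yield distinct exceptional slopes $0\cdot\sigma'$, since $\sigma\mapsto 0\cdot\sigma$ is a bijection $\Sigma^*\to\F E'$, so our construction produces infinitely many distinct elements $\gamma\in\F E''$. On the other hand, each $\gamma\in\F E''$ is recovered from $\gamma^e$ (it is the exceptional slope with that even-length continued fraction expansion), and $\gamma^e$ is a word over the two-letter alphabet $\{1,2\}$; hence for every $\ell$ there are only finitely many $\gamma\in\F E''$ with $|\gamma^e|\le \ell$. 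Therefore the lengths $|\gamma^e|$ over our infinite family of slopes cannot all be bounded, so for every $N$ some $\gamma^e$ in the family is a palindromic initial segment of $x^f$ of length at least $N$, which is the claim.

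I do not expect a serious obstacle. The only slightly delicate point is the assertion that $|\gamma^e|$ is unbounded: one could attempt to prove it directly by analyzing how $|\gamma^e|$ changes when a letter is appended to the left-right sequence, but this is mildly awkward because appending an $L$ immediately after the initial $R$ leaves the length of the even-length continued fraction unchanged; the counting argument above avoids this issue entirely.
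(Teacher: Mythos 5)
Your proposal is correct and follows essentially the same route as the paper, which deduces the corollary directly from Corollary \ref{CantorFracCor} together with the palindrome property in Theorem \ref{cfracThm}(2); your counting argument for the unboundedness of $|\gamma^e|$ is a harmless substitute for the observation (contained in the lemma preceding Corollary \ref{CantorFracCor}) that $\gamma^e$ is a \emph{proper} initial segment of $(\gamma\cdot\sigma)^e$, which already forces the lengths to grow strictly along the initial segments.
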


Having studied the continued fractions of exceptional slopes in great detail, the next result is now easy.

\begin{theorem}\label{thm-cantor-notquad} If $x\in C''$ has a left-right sequence $\sigma_x$ which is not eventually constant, then $x^f$ is not eventually repeating.  Thus $x$ is not a quadratic irrational.
\end{theorem}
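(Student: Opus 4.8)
The plan is to assume that $x^f$ is eventually periodic and derive a contradiction. Fix the \emph{smallest} eventual period $p$ of $x^f$ together with a preperiod $N$, so that $x^f[i]=x^f[j]$ whenever $i,j>N$ and $i\equiv j\pmod p$ (here $x^f[i]$ denotes the $i$-th partial quotient of $x$). The hypothesis that $\sigma_x$ is not eventually constant will be used to produce, for all large $k$, a single position of $x^f$ at which this rule is violated. To set up good exceptional approximants: since $x\in C''$ the word $\sigma_x$ begins with $RL$, and since it is not eventually constant it contains infinitely many $L$'s and $R$'s, so we may write $\sigma_x=RL^{a_1}R^{b_1}L^{a_2}R^{b_2}\cdots$ with every $a_i,b_i\geq 1$. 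For each $k$ put $\sigma_k'=RL^{a_1}R^{b_1}\cdots L^{a_k}R^{b_k}$ and $\gamma_k=0\cdot\sigma_k'$, which lies in $\F E''$ since $\sigma_k'$ begins with $RL$. Because $\sigma_k'$ ends in $R$ and is immediately followed in $\sigma_x$ by an $L$, Corollary \ref{CantorFracCor}(2) shows $\gamma_k^o$ is an initial segment of $x^f$. Writing $\sigma_{\gamma_k}=\sigma'LR^{b_k}$ and applying Proposition \ref{repeatingStructProp} with $\beta_k=\pare_R(\gamma_k)$ and $\alpha_k=\pare_L(\beta_k)$, we get $\gamma_k^e=(\beta_k^o2)^{b_k+1}\alpha_k^e$, with $\alpha_k^e$ an initial segment of $\beta_k^o$; in particular $\gamma_k^e$ has period $q_k:=|\beta_k^o2|$. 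Since the left--right word of $\beta_k$ has length tending to $\infty$, the lemma immediately preceding Corollary \ref{CantorFracCor} gives $q_k\to\infty$.

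Next I would pinpoint where the period $q_k$ is broken. As $\gamma_k\in\F E''$, Theorem \ref{cfracThm} shows $\gamma_k^e$ is a palindrome of $1$'s and $2$'s beginning (hence ending) in $2$; and for $k$ large $\beta_k\neq\tfrac12$, so $\gamma_k^o$ is obtained from $\gamma_k^e$ by replacing its terminal $2$ with $11$. Writing $\gamma_k^e=u_k2$, the word $u_k$ is a prefix of $x^f$ of length $L_k:=|\gamma_k^e|-1$, and $x^f$ begins with $u_k11$; in particular $x^f[L_k+1]=1$. On the other hand, periodicity of $\gamma_k^e=u_k2$ with period $q_k$ forces $x^f[L_k+1-q_k]=2$. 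Since $L_k+1-q_k=b_kq_k+|\alpha_k^e|\geq q_k\to\infty$, for all large $k$ both indices exceed $N$, they differ by $q_k$, and they carry different values; therefore $p\nmid q_k$ for all large $k$.

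Finally I would prove the opposite, that $p\mid q_k$ for large $k$, completing the contradiction. Let $v_k$ be the factor of $x^f$ occupying positions $N+1$ through $L_k$; it has length $L_k-N\to\infty$, it inherits the period $q_k$ from $u_k$, and it has period $p$ by eventual periodicity. Let $p_k^*$ be the smallest period of $v_k$. For $k$ large the length hypothesis of Lemma \ref{repeatingLemma} is met (since $L_k-N\geq 2q_k-1-N$), so $p_k^*$ divides both $q_k$ and $p$. If $p_k^*<p$ for infinitely many $k$, then since $p_k^*\mid p$ only finitely many values occur, so by pigeonhole some fixed $p^*<p$ recurs along an infinite set of $k$; passing to the limit there (using $L_k-N\to\infty$) yields $x^f[i]=x^f[i+p^*]$ for \emph{all} $i>N$, contradicting the minimality of $p$. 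Hence $p_k^*=p$ for large $k$, so $p\mid q_k$ — contradicting the previous paragraph. Thus $x^f$ is not eventually periodic, and by the Euler--Lagrange theorem $x$ is not a quadratic irrational. The main obstacle is this last step: producing the $\gamma_k$ and reading off $q_k$ is routine bookkeeping with the continued-fraction machinery of \S\ref{sec-transcendental}, but extracting that every $q_k$ is a multiple of $p$ requires ruling out that the finite windows $v_k$ develop spuriously small periods, which is exactly what the minimality of the eventual period $p$ combined with $L_k-N\to\infty$ buys us.
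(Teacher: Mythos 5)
Your argument is correct. It runs on the same machinery as the paper's own proof --- Corollary \ref{CantorFracCor} to realize expansions of exceptional slopes as prefixes of $x^f$, the decomposition $\gamma^e=(\beta^o2)^{n+1}\alpha^e$ of Proposition \ref{repeatingStructProp} to produce the period $q_k=|\beta_k^o2|$, and Lemma \ref{repeatingLemma} to compare periods --- but you extract the contradiction differently. The paper picks a \emph{single} sufficiently long initial segment $\sigma'$ ending in $R$, observes that the window $a_c\cdots a_{2p'}$ of $x^f$ carries both the assumed period $p$ and the period $p'=|\beta^o2|$, and concludes via Lemma \ref{repeatingLemma} that $\beta^o2$ is a proper power of a shorter word, which is exactly what the palindrome argument at the end of the proof of Proposition \ref{repeatingStructProp} forbids. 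You never use that primitivity of $\beta^o2$: instead you locate an explicit violation of the period $q_k$ at the seam where the terminal $2$ of $\gamma_k^e$ becomes $11$ in $\gamma_k^o$, giving $p\nmid q_k$, and then you prove $p\mid q_k$ by a pigeonhole-and-limit argument on the smallest periods of the growing windows $v_k$, which is why you need the whole sequence $\gamma_k$ rather than one well-chosen $\gamma$. The trade-off is that the paper's route is shorter because it recycles a fact already proved, while yours substitutes the elementary $2\mapsto 11$ mismatch at the cost of the extra limiting argument in your final paragraph; both are sound. One harmless slip: the passage from $\gamma_k^e$ to $\gamma_k^o$ by replacing the terminal $2$ with $11$ requires $\gamma_k^e$ to end in a $2$, which follows from $\gamma_k\in\F E''$ (its even expansion is a palindrome beginning with $2$); the condition $\beta_k\neq\tfrac{1}{2}$ that you invoke is not the relevant hypothesis there.
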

\begin{proof}
Assume $x^f$ is eventually repeating with period $p$, beginning from the $c$th digit of $x^f$.  Fix $N$ very large relative to $p$ and $c$.  Since $\sigma_x$ is not eventually constant, we may find an initial segment $\sigma'$ of $\sigma_x$ which ends in $R$ such that $\sigma'$ and the parents of $\sigma'$ have length at least $N$.  Correspondingly, let $\alpha$ and $\beta$ be the parents of $\gamma = 0\cdot \sigma'$, so that $\gamma = \alpha.\beta$.    As in Proposition \ref{repeatingStructProp}, we have $$\gamma^e = (\beta^o2)^{n}\alpha^e$$ for some $n\geq 2$.  Then $\beta^o$ and $\alpha^e$ have length at least $N$.  Either $\gamma^e$ or $\gamma^o$ is an initial segment of $x^f$ by Corollary \ref{CantorFracCor}, so $x^f$ begins like $$x^f = \beta^o2\beta^o2\ldots.$$
 Let $p'>N$ be the length of $\beta^o2$.  Writing $x^f = a_1a_2\ldots$, consider the word $$\sigma = a_ca_{c+1}\ldots a_{2p'}.$$  By construction, $\sigma$ has periods $p$ and $p'$, and $\sigma$ has length longer than $p+p'$.  Lemma \ref{repeatingLemma} shows that $p$ is in fact the shortest period of $\sigma$. Again by Lemma \ref{repeatingLemma}, $p$ divides $p'$ and the word $\beta^o2$ is a power of a shorter word.  As in the proof of Proposition \ref{repeatingStructProp}, this is a contradiction.
\end{proof}

Theorem \ref{cantorTranscendThm} follows at once from Corollary \ref{pallindromeCor}, Theorem \ref{thm-cantor-notquad} and Theorem \ref{thm-AdB}.

\section{The Beilinson spectral sequence}\label{sec-beilinson}

In this section, we use the generalized Beilinson spectral sequence to describe a canonical resolution of the general sheaf in a moduli space $M(\xi)$.  This resolution illuminates the structure of the effective cone.

\subsection{Generalities on the Beilinson spectral sequence}  
Following \cite{Drezet}, a \emph{triad} is a triple $(E,G,F)$ of exceptional bundles where the slopes are of the form $(\alpha,\alpha.\beta,\beta)$, $(\beta-3,\alpha,\alpha.\beta)$, or $(\alpha.\beta,\beta,\beta+3)$ for some exceptional slopes $\alpha,\beta$ of the form \begin{equation}\tag{$*$}\label{eqn-ab} \alpha = \varepsilon\left(\frac{p\vphantom{1}}{2^q}\right) \qquad \beta = \varepsilon\left(\frac{p+1}{2^q}\right).\end{equation} Corresponding to the triad $(E,G,F)$ is a fourth exceptional bundle $M$ defined as the cokernel of the canonical map $\ev^*:G\to F\te \Hom(G,F)^*$.  The collection $(E^*(-3),M^*,F^*)$ is another triad.

The bundles of a triad form a \emph{strong exceptional collection} for the bounded derived category $D^b(\P^2)$.  In other words, if $A,B$ are two bundles in a triad with $A$ listed before $B$, then $\Ext^i(A,B)=0$ for $i>0$ and $\Ext^i(B,A)=0$ for all $i$.  Furthermore, a triad is a full collection, in the sense that it generates the derived category.  This fact is a consequence of the generalized Beilinson spectral sequence, which we now recall.

\begin{theorem}[\cite{DrezetBeilinson}]\label{beilinsonThm}
Let $U$ be a coherent sheaf, and let $(E,G,F)$ be a triad.  Write
$$\begin{aligned}
G_{-2} &= E \\
G_{-1} &= G \\
G_0 &= F 
\end{aligned}\qquad\qquad
\begin{aligned} 
F_{-2} &= E^*(-3)\\
F_{-1} &= M^*\\
F_{0} &= F^*,
\end{aligned}$$ and put $G_i=F_i=0$ if $i\notin \{-2,-1,0\}$.
There is a spectral sequence with $E_1^{p,q}$-page $$E_{1}^{p,q} = G_p\te H^q(U\te F_p)$$ which converges to $U$ in degree $0$ and to $0$ in all other degrees.
\end{theorem}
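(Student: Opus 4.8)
The plan is to deduce the spectral sequence from a resolution of the diagonal attached to the triad, exactly as in the classical Beilinson argument; this is the approach of Dr\'ezet \cite{DrezetBeilinson}, which I would follow. The starting point is that the Fourier--Mukai transform $D^b(\PP^2)\to D^b(\PP^2)$ with kernel $\OO_\Delta$ on $\PP^2\times\PP^2$ is the identity functor, so for every coherent sheaf $U$ one has $U\simeq Rp_{2\ast}(p_1^\ast U\otimes^L\OO_\Delta)$, where $p_1,p_2$ are the two projections. The entire content of the theorem is then packaged in an explicit locally free complex quasi-isomorphic to $\OO_\Delta$.

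First I would record that $(E,G,F)$ is not merely a strong exceptional collection (which is assumed) but a \emph{full} one: this is part of Dr\'ezet's theory \cite{DrezetBeilinson}, and can also be seen by exhibiting, for each of the three admissible shapes $(\alpha,\alpha.\beta,\beta)$, $(\beta-3,\alpha,\alpha.\beta)$, $(\alpha.\beta,\beta,\beta+3)$, a finite chain of mutations carrying the triad to the standard collection $(\OO,\OO(1),\OO(2))$, noting that mutation preserves both strength and fullness. Next I would identify the dual collection. Dualizing the defining sequence $0\to G\xrightarrow{\ev^\ast}F\otimes\Hom(G,F)^\ast\to M\to 0$ gives $0\to M^\ast\to F^\ast\otimes\Hom(G,F)\to G^\ast\to 0$, and a direct computation with $\Ext$-groups---using that $E,G,F$ are exceptional, that the triad is strong, and this sequence---shows that, with the notation of the statement, $\Ext^\bullet(G_p,F_{p'})$ vanishes unless $p=p'$ and equals $\C$ concentrated in degree $0$ when $p=p'$. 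In other words $(F_{-2},F_{-1},F_0)=(E^\ast(-3),M^\ast,F^\ast)$ is the dual collection of $(G_{-2},G_{-1},G_0)=(E,G,F)$; the twist by $\OO(-3)=\omega_{\PP^2}$ is forced by Serre duality.

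Given this duality, the general machinery of full exceptional collections on a smooth projective variety (Bondal, Gorodentsev--Rudakov) produces a resolution
\[0\to F_{-2}\boxtimes G_{-2}\to F_{-1}\boxtimes G_{-1}\to F_0\boxtimes G_0\to\OO_\Delta\to 0\]
on $\PP^2\times\PP^2$; concretely, one checks that the three-term complex $C^\bullet$ obtained by deleting $\OO_\Delta$, placed in cohomological degrees $-2,-1,0$, represents $\OO_\Delta$ by verifying that the natural transformation $Rp_{2\ast}(p_1^\ast(-)\otimes C^\bullet)\to\id$ is an isomorphism on each of $E,G,F$, which suffices since these generate $D^b(\PP^2)$. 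Feeding $C^\bullet$ into $U\simeq Rp_{2\ast}(p_1^\ast U\otimes^L\OO_\Delta)$ and using the projection formula together with flat base change, one gets $Rp_{2\ast}(p_1^\ast U\otimes(F_p\boxtimes G_p))\simeq R\Gamma(\PP^2,U\otimes F_p)\otimes G_p$; the hypercohomology spectral sequence of the complex $Rp_{2\ast}(p_1^\ast U\otimes C^\bullet)$ therefore has $E_1^{p,q}=H^q(\PP^2,U\otimes F_p)\otimes G_p$ and abuts to $\mathcal H^{p+q}$ of $U$, i.e. to $U$ in degree $0$ and to $0$ otherwise. That is the assertion.

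The formal steps---the Fourier--Mukai identity and the hypercohomology spectral sequence---are routine; I expect the real work to be the construction of the resolution of the diagonal in the generality of an arbitrary triad rather than only for $(\OO,\OO(1),\OO(2))$. Concretely this means the $\Ext$-computation identifying $(E^\ast(-3),M^\ast,F^\ast)$ as the dual collection---the delicate part being the control of $\Ext^\bullet(-,M^\ast)$ through the defining sequence of $M$---together with the verification that the resulting complex $C^\bullet$ really is quasi-isomorphic to $\OO_\Delta$; equivalently, one could instead track how Beilinson's classical resolution of $\OO_\Delta$ transforms under the chain of mutations relating the three admissible triad shapes to the standard collection.
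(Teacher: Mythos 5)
First, a point of comparison: the paper does not prove this statement at all --- it quotes it from Dr\'ezet \cite{DrezetBeilinson} --- so your proposal can only be measured against the standard argument, which is indeed the one you outline: a resolution of the diagonal attached to the triad and its dual collection, the Fourier--Mukai identity $U\simeq Rp_{2\ast}(p_1^\ast U\te^L\OO_\Delta)$, and the hypercohomology spectral sequence. That architecture is correct, as is the reduction of the quasi-isomorphism $C^\bullet\simeq\OO_\Delta$ to a check on the generators $E,G,F$.

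The genuine problem sits in the step you yourself single out as the real work: the orthogonality relations identifying $(E^*(-3),M^*,F^*)$ as the dual collection. You assert that $\Ext^\bullet(G_p,F_{p'})$ vanishes for $p\neq p'$ and equals $\C$ concentrated in degree $0$ for $p=p'$. Both the pairing and the degree are wrong. The groups that actually enter the generator check are $Rp_{2\ast}\bigl(p_1^\ast G_{p'}\te(F_p\boxtimes G_p)\bigr)\simeq H^\bullet(G_{p'}\te F_p)\te G_p$, i.e. $H^\bullet(G_{p'}\te F_p)=\Ext^\bullet(F_p^*,G_{p'})$, not $\Ext^\bullet(G_p,F_{p'})=H^\bullet(G_p^*\te F_{p'})$; these are different groups, and the latter need not vanish off the diagonal (for the triad $(\OO(1),T_{\P^2}(0),\OO(2))$ one has $\Ext^\bullet(G_{-2},F_0)=H^\bullet(\OO_{\P^2}(-3))\neq 0$). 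Moreover the correct relation is that $H^q(G_{p'}\te F_p)=\C$ exactly when $p=p'$ and $q=-p$, and vanishes otherwise: indeed $H^\bullet(E\te E^*(-3))=\Ext^\bullet(E,E(-3))\cong\Ext^{2-\bullet}(E,E)^*=\C$ in degree $2$ by Serre duality and rigidity of $E$; $H^\bullet(G\te M^*)=\Ext^\bullet(M,G)\cong\Ext^{\bullet-1}(G,G)=\C$ in degree $1$, using the defining sequence of $M$ together with $\Ext^\bullet(F,G)=0$; and only $H^\bullet(F\te F^*)=\Ext^\bullet(F,F)=\C$ sits in degree $0$. This placement on the antidiagonal $q=-p$ is precisely what puts the unique surviving $E_1$-term in total degree $p+q=0$; with the relations as you state them, the transform of $G_{p'}$ would come out concentrated in degree $p'$, the complex $C^\bullet$ would not be quasi-isomorphic to $\OO_\Delta$, and the spectral sequence could not converge to $U$ in degree $0$. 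The error is local and fixable --- redo the nine cohomology computations for the correct pairing as above --- after which the rest of your argument goes through.
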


By choosing the triad appropriately, it is possible to obtain interesting information about the sheaf $U$.  To study $\Eff(M(\xi))$, we will choose a triad related to the corresponding exceptional bundle.

\subsection{Choosing the triad} 
The corresponding exceptional slope to $\xi$ (see Definition \ref{def-exceptional-slope}) can be written uniquely in the form $\alpha.\beta$, where $\alpha,\beta$ are as in Equation ($*$).  Denote by $U$ a general sheaf in $M(\xi)$.  As discussed in \S\ref{sec-exceptional}, the qualitative behavior of the effective  cone depends on the sign of $(\xi,\xi_{\alpha.\beta})$. Which triad we use to decompose $U$ depends on the sign of this pairing as well.

\begin{enumerate} \item In case $(\xi,\xi_{\alpha.\beta})>0$, we will decompose $U$ according to the triad $(E_{-\alpha-3},E_{-\beta},E_{-(\alpha.\beta)})$.

\item If $(\xi,\xi_{\alpha.\beta})<0$, we will use the triad $(E_{-(\alpha.\beta)-3},E_{-\alpha-3},E_{-\beta})$.

\item When $(\xi,\xi_{\alpha.\beta})=0$, the spectral sequence from either of the previous two triads degenerates and gives the same result.
\end{enumerate}

We will primarily focus on the first case in this section, briefly summarizing the results in the other two cases at the end.

\subsection{The spectral sequence when $(\xi,\xi_{\alpha.\beta})>0$} Assuming $(\xi,\xi_{\alpha.\beta})>0$, we aim to describe the Beilinson spectral sequence of the general $U\in M(\xi)$ with respect to the triad $(E_{-\alpha-3},E_{-\beta},E_{-(\alpha.\beta)})$.  In the notation of Theorem \ref{beilinsonThm}, we have $$\begin{aligned}
G_{-2} &= E_{-\alpha-3} \\
G_{-1} &= E_{-\beta}  \\
G_0 &= E_{-(\alpha.\beta)} 
\end{aligned}\qquad\qquad
\begin{aligned} 
F_{-2} &= E_\alpha\\
F_{-1} &= E_{\alpha.(\alpha.\beta)}\\
F_{0} &= E_{\alpha.\beta}.
\end{aligned}$$
(To compute $F_{-1}$, notice that the only exceptional bundle $F_{-1}$ such that $(E_{\alpha},F_{-1},E_{\alpha.\beta})$ is a triad is $F_{-1}=E_{\alpha.(\alpha.\beta)}$.) Then the associated spectral sequence has the following first page.

$$
\xymatrix{E_{-\alpha-3}\te H^2(U\te E_\alpha) \ar[r] & E_{-\beta}\te H^2(U\te  E_{\alpha.(\alpha.\beta)})  \ar[r] & E_{-(\alpha.\beta)}\te H^2(U\te E_{\alpha.\beta}) \\
E_{-\alpha-3}\te H^1(U\te  E_\alpha) \ar[r] & E_{-\beta}\te H^1(U\te E_{\alpha.(\alpha.\beta)})  \ar[r] & E_{-(\alpha.\beta)}\te H^1(U\te E_{\alpha.\beta})  \\
E_{-\alpha-3}\te H^0(U\te  E_\alpha) \ar[r] & E_{-\beta}\te H^0(U\te E_{\alpha.(\alpha.\beta)})  \ar[r] & E_{-(\alpha.\beta)}\te H^0(U\te E_{\alpha.\beta})  \\
}$$

Of course, we expect that many of these terms vanish.  To see which vanishings are expected, we perform a numerical calculation.

\begin{lemma}\label{lem-estimates}
With the notation in this subsection, the following inequalities hold.
\begin{enumerate}
\item $\mu(U\te E_\alpha)> -3/2$
\item $\chi(U\te E_\alpha)<0$
\item $\chi(U\te E_{\alpha.(\alpha.\beta)})<0$
\end{enumerate}
\end{lemma}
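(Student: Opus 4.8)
The plan is to reduce all three inequalities to explicit computations in the $(\mu,\Delta)$-plane, using the Riemann--Roch formula for $\chi(E,F)$ recalled in \S\ref{sec-prelim} together with the description of the corresponding exceptional slope from \S\ref{sec-exceptional}. Write $\delta = \alpha.\beta$ for the corresponding exceptional slope, so that by hypothesis $(\xi,\xi_\delta)>0$, and recall that in this case $(\mu^+,\Delta^+) = Q_\xi \cap Q_{\xi_{-\delta}}$ and $\mu^+ > \alpha - x_\alpha$ by Lemma \ref{stupidInequalityLemma}. The Chern characters $E_\alpha, E_{\alpha.\delta}, E_\delta$ have slopes $\alpha < \alpha.\delta < \delta$, and their discriminants are $\Delta_\alpha, \Delta_{\alpha.\delta}, \Delta_\delta$, all $<\tfrac12$. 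Since $\mu(U\te E_\gamma) = \mu(\xi) + \gamma$ for an exceptional slope $\gamma$, and $\chi(U\te E_\gamma) = \chi(\xi^*\te \xi_\gamma)$ depends only on $(\mu(\xi)+\gamma, \Delta(\xi)+\Delta_\gamma)$ via Riemann--Roch, each statement is a statement about where certain points sit relative to $Q_\xi$ or to the line $\Delta = -\tfrac32\mu - \tfrac18$ (the vertex locus).

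For part (1): $\mu(U\te E_\alpha) = \mu(\xi) + \alpha$. We must show $\mu(\xi) + \alpha > -\tfrac32$, equivalently $\alpha > -\tfrac32 - \mu(\xi) = \mu_1$, the $\mu$-coordinate of the vertex of $Q_\xi$. Here I would invoke stability: since $(\xi,\xi_\delta)>0$ there is a nonzero map $E_{-\delta}\to U$ for $U$ slope-stable (as already used in Lemma \ref{stupidInequalityLemma}), giving $-\delta < \mu(\xi)$, hence $\mu(\xi) + \delta > 0 > -\tfrac32$; then since $\alpha < \delta$ one needs a little more. The cleaner route is: the larger root $\mu_0$ of $Q_\xi \cap \{\Delta = \tfrac12\}$ lies in $I_\alpha = (\alpha - x_\alpha, \alpha + x_\alpha)$, and $\mu_0 = -\tfrac32 - \mu(\xi) + \tfrac12\sqrt{5+8\Delta(\xi)} > \mu_1 = -\tfrac32 - \mu(\xi)$ since $\Delta(\xi) > \tfrac12$ forces the square root to be positive. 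Since $\mu_0 < \alpha + x_\alpha$ and $x_\alpha < \tfrac32$, combined with the relation between $\mu_0$ and $\mu_1$ one extracts $\alpha > \mu_1$; I expect this to come out of the explicit formulas with a short estimate, comparing $\alpha$ to $\mu_0$ using $\alpha > \mu_0 - x_\alpha > \mu_0 - \tfrac32$ and $\mu_0 > \mu_1$. This is the one place where a genuine (if short) inequality argument rather than a tautology is needed, so I expect part (1) to be the main obstacle.

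For parts (2) and (3): by Riemann--Roch, $\chi(U\te E_\gamma) < 0$ is equivalent to the point $(\mu(\xi)+\gamma, \Delta(\xi)+\Delta_\gamma)$ lying strictly \emph{above} the reference parabola $Q_{\xi_0}$, i.e.\ to $(\gamma, \Delta_\gamma)$ lying strictly above $Q_\xi$ (translating by $(-\mu(\xi), -\Delta(\xi))$ and using $P(\mu(\xi)+\gamma) - \Delta(\xi) - \Delta_\gamma < 0 \iff \Delta_\gamma > P(\mu(\xi)+\gamma) - \Delta(\xi)$, which is precisely "$(\gamma,\Delta_\gamma)$ is above $Q_\xi$"). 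So I must show the exceptional points $(\alpha, \Delta_\alpha)$ and $(\alpha.\delta, \Delta_{\alpha.\delta})$ both lie strictly above $Q_\xi$. Now $(\alpha, \Delta_\alpha)$ is the vertex of the parabolic arc $Q_{\xi_{-\alpha}}$ in the curve $\delta(\mu)=\Delta$; since $(\xi,\xi_\delta)>0$ we are in case (1) or $(1')$ of the Remark after Proposition \ref{stableCharacterProp}, where $Q_\xi$ meets $Q_{\xi_{-\delta}}$ at $(\mu^+,\Delta^+)$ with $\mu^+ > \alpha - x_\alpha$ — and in fact the whole picture (Figure \ref{figure-arrangements}, cases (1),($1'$)) shows $Q_\xi$ passes below the vertex $(\alpha,\Delta_\alpha)$ of the $\delta$-curve over $I_\alpha$; I would make this rigorous by the same Intermediate Value Theorem / translation argument as in Lemma \ref{stupidInequalityLemma}, noting $Q_\xi$ and $Q_{\xi_{-\alpha}}$ are translates meeting in one point, and that the left endpoint $(\alpha - x_\alpha, \tfrac12)$ of that arc lies above $Q_\xi$ (since $\mu^+ > \alpha - x_\alpha$ and $Q_\xi$ is increasing there), forcing the vertex of $Q_{\xi_{-\alpha}}$, which lies to the left, to be above $Q_\xi$ as well — but $(\alpha,\Delta_\alpha)$ is exactly that vertex. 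For $(\alpha.\delta, \Delta_{\alpha.\delta})$: it lies on the arc $Q_{\xi_{-\alpha.\delta}}$, and since $\alpha < \alpha.\delta < \delta < \alpha + x_\alpha$ with $\alpha.\delta \in I_\alpha$, the point $(\alpha.\delta, \delta(\alpha.\delta)) = (\alpha.\delta, \Delta_{\alpha.\delta})$ lies on the $\delta$-curve over $I_\alpha$, which the previous paragraph shows lies strictly above $Q_\xi$ throughout $(\alpha - x_\alpha,\ \mu^+)$ — and $\alpha.\delta < \delta = $ (the $\mu$-coordinate where $Q_\xi$ meets $Q_{\xi_{-\delta}}$, roughly) — so again it is above $Q_\xi$. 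The only care needed is to confirm $\alpha.\delta$ lies in the range where the $\delta$-curve is above $Q_\xi$; this follows from $\alpha.\delta < \delta$ and the location of $\mu^+$, which in case (1) is $\le \delta$ and in case $(1')$ exceeds $\alpha$ but the $\delta$-curve still dominates $Q_\xi$ on $(\alpha - x_\alpha, \mu_0)$ with $\mu_0 > \alpha.\delta$. I would organize (2) and (3) as a single lemma: every exceptional slope $\gamma$ with $\gamma \in I_\alpha$ and $\gamma \le \delta$ has $(\gamma,\Delta_\gamma)$ strictly above $Q_\xi$, proved once by the translation/IVT argument, then applied to $\gamma = \alpha$ and $\gamma = \alpha.\delta$.
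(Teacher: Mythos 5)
Your overall strategy --- converting each inequality into a statement about whether a point of the $(\mu,\Delta)$-plane lies above or below $Q_\xi$ --- is the right one and is how the paper proceeds, but both halves of your argument rest on misplacing the relevant points. In part (1) you put $\mu_0$ in $I_\alpha$ and deduce $\alpha>\mu_0-x_\alpha$. But $\alpha$ here is the \emph{left parent} of the corresponding exceptional slope $\alpha.\beta$ (your $\delta$); Theorem \ref{existenceOfSlope} places $\mu_0$ in $I_{\alpha.\beta}$, and since the intervals $I_\gamma$ are pairwise disjoint one actually has $\mu_0>\alpha.\beta-x_{\alpha.\beta}\geq\alpha+x_\alpha$, so $\alpha<\mu_0-x_\alpha$ --- the opposite of your key inequality. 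The correct chain is $\mu(\xi)+\alpha>\alpha-\mu_0>\alpha-(\alpha.\beta+x_{\alpha.\beta})$ (using $\mu_0-\mu_1>\tfrac32$, which your explicit formula does give), and the entire content of (1) is then the estimate $\alpha-(\alpha.\beta+x_{\alpha.\beta})\geq\tfrac{\sqrt5-5}{2}\approx-1.38>-\tfrac32$, which requires bounding how far the left parent $\alpha$ can lie below $I_{\alpha.\beta}$ (the worst case being $\alpha$ and $\alpha.\beta$ consecutive integers). That ingredient is absent from your proposal, and no bound of the shape ``$x_\gamma<\tfrac32$'' alone can substitute for it.

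For (2) and (3), the reduction to ``$(\gamma,\Delta_\gamma)$ lies strictly above $Q_\xi$'' is correct, but $(\alpha,\Delta_\alpha)$ is \emph{not} the vertex of the arc $Q_{\xi_{-\alpha}}$, and $(\alpha.\delta,\Delta_{\alpha.\delta})$ does \emph{not} lie on the $\delta$-curve: one computes $Q_{\xi_{-\gamma}}(\gamma)=P(0)-\Delta_\gamma=1-\Delta_\gamma\neq\Delta_\gamma$, and likewise $\delta(\gamma)=1-\Delta_\gamma\geq\tfrac12>\Delta_\gamma$. The points you must control sit at the \emph{bottom} of the triangle in Figure \ref{figure-triangle}, strictly below the line $\Delta=\tfrac12$, so proving that the $\delta$-curve dominates $Q_\xi$ over $I_\alpha$ says nothing about them. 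The missing idea is triad orthogonality: $\chi(E_{\alpha.\beta},E_\alpha)=\chi(E_{\alpha.\beta},E_{\alpha.(\alpha.\beta)})=0$, so both $(\alpha,\Delta_\alpha)$ and $(\alpha.(\alpha.\beta),\Delta_{\alpha.(\alpha.\beta)})$ lie on the \emph{single} parabola $Q_{\xi_{-(\alpha.\beta)}}$. One then runs your translation/IVT argument on that parabola: it meets $Q_\xi$ only at $(\mu^+,\Delta^+)$ with $\mu^+>\alpha.\beta-x_{\alpha.\beta}$ by Lemma \ref{stupidInequalityLemma}; at $\mu_0<\mu^+$ it satisfies $Q_{\xi_{-(\alpha.\beta)}}(\mu_0)\geq\tfrac12=Q_\xi(\mu_0)$, hence it lies above $Q_\xi$ everywhere to the left of $\mu^+$; and $\alpha<\alpha.(\alpha.\beta)<\alpha.\beta-x_{\alpha.\beta}<\mu^+$ by disjointness of the intervals, so both points lie above $Q_\xi$. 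The technique you propose is the right one, but it must be applied to $Q_{\xi_{-(\alpha.\beta)}}$ and to the exceptional invariants themselves, not to arcs of the $\delta$-curve.
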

\begin{proof}
(1) The reference parabola $Q_{\xi_0}$ corresponding to $\OO_{\P^2}$ passes through the points $(0,1)$ and $(-3,1)$ in the $(\mu,\Delta)$-plane.  It follows that any parabola obtained by translating $Q_{\xi_0}$ downward by more than $\frac{1}{2}$ intersects the line $\Delta = \frac{1}{2}$ in two points more than $3$ units apart.

Let $\mu_0\in\R $ be the larger of the two slopes such that $(\mu_0,\frac{1}{2})\in Q_\xi$, and let $(\mu_1,\Delta_1)\in \Q^2$ be the vertex of $Q_\xi$.  The parabola $Q_\xi$ is obtained from $Q_{\xi_0}$ by translation by $(-\mu(\xi),-\Delta(\xi))$.  Since $\Delta(\xi)>\frac{1}{2}$, the previous paragraph shows that 
$$\mu_0 - \mu_1 > \frac{3}{2}.$$  The vertex of $Q_{\xi_0}$ occurs at $(-\frac{3}{2},-\frac{1}{8})$, so $\mu_1 = -\mu(\xi)-\frac{3}{2}$. By construction of the corresponding exceptional slope, $\mu_0 \in I_{\alpha.\beta}$, so $\mu_0 < \alpha.\beta+x_{\alpha.\beta}$.  Then $$\mu(U\te E_\alpha) = \mu(\xi)+\alpha=-\frac{3}{2}-\mu_1+\alpha> \alpha- \mu_0 > \alpha - (\alpha.\beta+ x_{\alpha.\beta}) \geq \frac{\sqrt 5-5}{2} \approx -1.38 > -\frac{3}{2},$$ with the worst case of the ``$\geq$'' occurring  when $\alpha$ and $\alpha.\beta$ are consecutive integers.

(2 \& 3) Consider the parabola $Q_{\xi_{-(\alpha.\beta)}}$ giving the arc of the curve $\delta(\mu)=\Delta$ over the interval $[\alpha.\beta-x_{\alpha.\beta},\alpha.\beta]$.  The parabola $Q_\xi$ intersects $Q_{\xi_{-(\alpha.\beta)}}$ at some point $(\mu_2,\Delta_2)$, and $\mu_2 > \alpha.\beta-x_{\alpha.\beta}$ by Lemma \ref{stupidInequalityLemma}.  We have $\mu_0<\mu_2$, so to the left of $\mu_2$ the parabola $Q_{\xi_{-(\alpha.\beta)}}$ lies above the parabola $Q_\xi$.  Therefore, if $\zeta$ is any Chern character with invariants on $Q_{\xi_{-(\alpha.\beta)}}$ such that $\mu(\zeta)< \mu_2$, we observe $(\xi,\zeta)<0$.

The invariants of $E_\alpha$ and $E_{\alpha.(\alpha.\beta)}$ both lie on $Q_{\xi_{-(\alpha.\beta)}}$ since $\chi(E_{\alpha.\beta},E_\alpha)=\chi(E_{\alpha.(\alpha.\beta)},E_\alpha)=0.$  The required inequalities then follow from the previous paragraph.
\end{proof}

As a consequence of part (1) Lemma \ref{lem-estimates} and semistability, the $H^2$-terms in the spectral sequence vanish for \emph{any} $U\in M(\xi)$.  We expect that for the general $U\in M(\xi)$ the $H^0$- and $H^1$-terms are governed by the Euler characteristic.  That is, we expect the bundles $U\te E_\alpha$, $U\te E_{\alpha.(\alpha.\beta)}$, and $U\te E_{\alpha.\beta}$ are all nonspecial, so that the spectral sequence takes the form
$$\xymatrix{
E_{-\alpha-3}\te \C^{m_1} \ar[r] & E_{-\beta}\te \C^{m_2}  \ar[r] & 0  \\
0\ar[r] & 0  \ar[r] & E_{-(\alpha.\beta)}\te \C^{m_3}  \\
}$$
where
$$\begin{aligned}
m_1 &=-\chi(U\te E_\alpha)\\
m_2 &=-\chi(U\te E_{\alpha.(\alpha.\beta)})\\
m_3 &=\chi(U\te E_{\alpha.\beta}).
\end{aligned}
$$

\begin{proposition}\label{resolutionProp}
With the notation of this subsection, if $U\in M(\xi)$ is general, then the Beilinson spectral sequence degenerates as above.  

More precisely, the general $U\in M(\xi)$ admits a resolution of the form $$0\to E_{-\alpha-3}^{m_1}\xrightarrow{\psi} E_{-\beta}^{m_2}\oplus E_{-(\alpha.\beta)}^{m_3} \to U\to 0.$$ 
\end{proposition}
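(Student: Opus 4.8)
The plan is to show that the ``expected'' vanishings in the Beilinson spectral sequence actually hold for the general $U \in M(\xi)$, and then read off the resolution from the degenerate $E_1$-page. First I would establish the $H^2$-vanishing: by part (1) of Lemma \ref{lem-estimates} we have $\mu(U\te E_\alpha)>-3/2>-3$, and since $E_\alpha$, $E_{\alpha.(\alpha.\beta)}$, $E_{\alpha.\beta}$ all have slopes that are $\geq \alpha$ (one checks this from the definition of the triad, as $\alpha.\beta$ and $\alpha.(\alpha.\beta)$ both lie in the interval $(\alpha,\beta)$), the bundles $U\te F_p$ for $p=-2,-1,0$ all have slope $>-3$, so semistability of $U$ and of the $F_p$ forces $H^2(U\te F_p)=\Hom(F_p,U(-3))^*=0$ for \emph{every} $U\in M(\xi)$. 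This kills the top row of the $E_1$-page unconditionally.

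The heart of the argument is to show that for general $U$ the three relevant twists are \emph{nonspecial}, i.e. at most one of $H^0,H^1$ is nonzero, with the nonzero one having dimension $|\chi|$. Combined with the signs in Lemma \ref{lem-estimates}(2),(3) — which give $\chi(U\te E_\alpha)<0$ and $\chi(U\te E_{\alpha.(\alpha.\beta)})<0$, so these twists have no sections and only $H^1$ survives — and with the fact that $\chi(U\te E_{\alpha.\beta})=(\xi,\xi_{\alpha.\beta})>0$ so that twist has no $H^1$, this collapses the page to the claimed $2\times 2$ shape. I would prove nonspeciality by a semicontinuity/deformation argument: the locus in $M(\xi)$ where $U\te E$ is nonspecial is open, so it suffices to exhibit a single semistable sheaf of character $\xi$ (not necessarily general, possibly a direct sum of stable sheaves of smaller discriminant, or a sheaf built as an elementary modification / extension, or obtained from a resolution of the expected shape) whose three twists are nonspecial. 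A clean way to do this is to run the logic in reverse: show that a general map $\psi\colon E_{-\alpha-3}^{m_1}\to E_{-\beta}^{m_2}\oplus E_{-(\alpha.\beta)}^{m_3}$ is injective with locally free cokernel $U$, compute $\ch(U)=\xi$ using the orthogonality relations $\chi(E_{\alpha.\beta},E_\alpha)=\chi(E_{\alpha.\beta},E_{\alpha.(\alpha.\beta)})=0$ and the definition of $m_i$, verify $U$ is semistable (via the exceptional-collection structure: the triad $(E_{-\alpha-3},E_{-\beta},E_{-(\alpha.\beta)})$ is strong and full, so cohomology of $U$ against the dual collection is controlled), and then observe that such a $U$ has, by construction, the desired Beilinson page — hence is nonspecial against the $F_p$'s — so the general member of $M(\xi)$ is too. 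Finally, once the $E_1$-page is the displayed $2\times 2$ square, the $E_2=E_\infty$ differential $\psi\colon E_{-\alpha-3}\te\C^{m_1}\to E_{-\beta}\te\C^{m_2}$ must be injective (its kernel would be a subsheaf of the page contributing to cohomology in degree $-1$, which must vanish) and its cokernel together with $E_{-(\alpha.\beta)}\te\C^{m_3}$ must assemble into $U$; tracking the two-step filtration on $U$ coming from the spectral sequence and noting $\Ext^1(E_{-(\alpha.\beta)},\coker\psi)=0$ (again by the strong exceptional collection property, since $E_{-(\alpha.\beta)}$ sits after $E_{-\beta}$ and $E_{-\alpha-3}$) shows the extension splits, yielding exactly $0\to E_{-\alpha-3}^{m_1}\to E_{-\beta}^{m_2}\oplus E_{-(\alpha.\beta)}^{m_3}\to U\to 0$.

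The main obstacle I expect is the nonspeciality/semistability step — proving that the general $U$ really does have all three twists nonspecial. The spectral-sequence bookkeeping and the splitting of the final extension are formal consequences of the strong exceptional collection axioms and the numerics already assembled in Lemma \ref{lem-estimates}; but controlling cohomology of a \emph{general} sheaf in a moduli space typically requires either an explicit good sheaf to specialize to or an induction on discriminant, and making sure the candidate sheaf (e.g. the cokernel of a general $\psi$) is genuinely Gieseker-semistable with the right Chern character — rather than merely torsion-free or slope-semistable — is where the real work lies. I would structure this as a separate lemma: ``a general $\psi$ as above has injective and the cokernel is a semistable sheaf of character $\xi$,'' proved by a dimension count comparing $\dim\Hom(E_{-\alpha-3}^{m_1},E_{-\beta}^{m_2}\oplus E_{-(\alpha.\beta)}^{m_3})$ modulo the automorphisms $\GL_{m_1}\times\Aut(E_{-\beta}^{m_2}\oplus E_{-(\alpha.\beta)}^{m_3})$ against $\dim M(\xi)=r^2(2\Delta-1)+1$, and by checking slope-semistability of the cokernel directly from the shape of the resolution (any destabilizing quotient would have to receive a nonzero map from $E_{-\beta}$ or $E_{-(\alpha.\beta)}$, and one bounds its slope from below using stability of those exceptional bundles).
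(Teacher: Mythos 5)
Your overall architecture matches the paper's: run the construction in reverse, take a general map $\psi\colon E_{-\alpha-3}^{m_1}\to E_{-\beta}^{m_2}\oplus E_{-(\alpha.\beta)}^{m_3}$, check that the virtual Chern character of the cokernel is $\xi$, that $\psi$ is injective (the paper uses global generation of the relevant $\sHom$ sheaves plus a Bertini-type theorem from \cite{HuizengaPaper2}), that the cokernel has the expected vanishings against $E_\alpha$, $E_{\alpha.(\alpha.\beta)}$, $E_{\alpha.\beta}$ (pure orthogonality of exceptional bundles), and that the cokernel is semistable. You correctly isolate semistability as the crux, but neither of the two methods you propose for it closes the argument. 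A destabilizing quotient $Q$ of $U=\coker\psi$ receives a surjection from $E_{-\beta}^{m_2}\oplus E_{-(\alpha.\beta)}^{m_3}$, hence a nonzero map from $E_{-\beta}$ or $E_{-(\alpha.\beta)}$; taking $Q$ minimal destabilizing (hence semistable), stability of these exceptional bundles only yields $\mu(Q)\geq -\beta$, which is far weaker than the needed $\mu(Q)\geq \mu(\xi)$. In the paper's closing example, with resolution $0\to \OO_{\P^2}(-2)^4\to \OO_{\P^2}(-1)^6\oplus\OO_{\P^2}\to U\to 0$ and $\mu(\xi)=\frac{2}{3}$, this bound is $\mu(Q)\geq -1$ and rules out nothing. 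Likewise, a dimension count showing the family of cokernels has dimension $\dim M(\xi)$ is compatible with that family missing the semistable locus entirely, so it cannot substitute for an argument that the two loci actually meet.

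The paper's fix, which you would need to adopt or replace, is indirect: one checks the cokernels are torsion-free and prioritary (i.e.\ $\Ext^2(U,U(-1))=0$, via Dr\'ezet's vanishing $\Ext^i(E_\gamma,E_{\gamma'})=0$ for $i>0$ and $\gamma<\gamma'$), and a Kodaira--Spencer computation (using orthogonality in the triad to kill $\Ext^1(A,A)$ and $\Ext^1(B,U_s)$) shows the cokernels form a \emph{complete} family of prioritary sheaves. By Hirschowitz--Laszlo \cite{HirschowitzLaszlo} the stack of prioritary sheaves of character $\xi$ is irreducible and contains the semistable locus as a dense open substack, so the general semistable sheaf is a cokernel of some $\psi$ and in particular has the nonspecial twists. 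Without this input (or an explicit semistable sheaf with the required vanishings to which one can specialize), the nonspeciality claim for the general $U\in M(\xi)$ is not established. Your remaining steps --- the $H^2$-vanishing from Lemma \ref{lem-estimates}(1), the sign analysis from parts (2) and (3), and the assembly of the resolution from the degenerate page --- are sound.
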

\begin{proof}
Consider a bundle $U$ defined by an exact sequence $$0\to E_{-\alpha-3}^{m_1}\xrightarrow{\psi} E_{-\beta}^{m_2} \oplus E_{-(\alpha.\beta)}^{m_3}\to U\to 0,$$ where the map $$\psi\in \Hom(E_{-\alpha-3}^{m_1},E_{-\beta}^{m_2}\oplus E_{-(\alpha.\beta)}^{m_3})$$ is general.

\emph{Step 1:} show $\ch(U)=\xi$.  At this point we do not know $\psi$ is injective.  We, therefore, virtually compute the Chern character of $U$ under this assumption.  That is, we compute the Chern character of the mapping cone of $\psi$ in the derived category.
 
To do this, we use the fact that the Beilinson spectral sequence for a sheaf $U\in M(\xi)$ converges to $U$.  Specifically, suppose $A,B,C$ are sheaves and we have a spectral sequence with $E_1$-page
$$\xymatrix{
A \te \C^{m_{11}} \ar[r] & B \te \C^{m_{21}} \ar[r] & C \te \C^{m_{31}}\\
A \te \C^{m_{12}} \ar[r] & B \te \C^{m_{22}} \ar[r] & C \te \C^{m_{32}}\\
}
$$that converges to a sheaf $U$ in degree $0$ and $0$ in other degrees.  An elementary calculation shows $$\ch(U) = -(m_{11}-m_{12})\ch(A)+(m_{21}-m_{22})\ch(B)-(m_{31}-m_{32})\ch(C).$$ In the present situation, we conclude $$\xi = -m_1\ch(E_{-\alpha-3})+m_2\ch(E_{-\beta})+m_3\ch(E_{-\alpha.\beta}).$$

\emph{Step 2:} show $\psi$ is injective.  The sheaves $$\sHom(E_{-\alpha-3},E_{-\beta}) \qquad \textrm{and} \qquad \sHom(E_{-\alpha-3},E_{-(\alpha.\beta)})$$ are both globally generated, from which we find $$\sHom(E_{-\alpha-3}^{m_1},E_{-\beta}^{m_2}\oplus E_{-(\alpha.\beta)}^{m_3})$$ is globally generated as well.  Combining a Bertini-type theorem \cite[Proposition 2.6]{HuizengaPaper2} with our calculation of the virtual rank of $E$, we conclude $\psi$ is injective.

\emph{Step 3:} show $U$ has the expected zeroes in its Beilinson spectral sequence.  It is enough to check $$H^0(U\te E_{\alpha})=H^0(U\te E_{\alpha.(\alpha.\beta)})=H^1(U\te E_{\alpha.\beta})=0,$$ and these vanishings follow immediately from orthogonality properties of exceptional bundles.  For instance, $\sHom(E_\beta,E_\alpha)$ and $\sHom(E_{\alpha.\beta},E_\alpha)$ both have no cohomology, while $\Ext^1(E_{\alpha+3},E_\alpha)\cong \Ext^1(E_\alpha,E_\alpha)^*=0$ since exceptional bundles are rigid.  This gives $H^0(U\te E_\alpha)=0$; the other vanishings are proved similarly.

\emph{Step 4:} show $U$ is stable.  For brevity, write $A = E_{-\alpha-3}^{m_1}$ and $B = E_{-\beta}^{m_2} \oplus E_{-(\alpha.\beta)}^{m_3}.$  Let $$S\subset \Hom(A,B)$$ be the open subset parameterizing injective sheaf maps with torsion-free cokernels.  

The set $S$ is in fact nonempty; to see this we show the general cokernel $U$ is torsion-free.    The argument in Step 2 shows the map $\psi_x$ on fibers is an injection outside of codimension $2$ in case $r( \xi) = 1$ and an injection everywhere if $r( \xi) >1$. Hence, any torsion in $U$ occurs in codimension at least $2$.  If $U$ has torsion in codimension $2$, let $T\subset U$ be its torsion subsheaf.  Then $T(-k)$ has sections for all large $k$, so $U(-k)$ has sections for all large $k$.  Since any locally free sheaf $E$ has $H^0(E(-k))=H^1(E(-k))=0$ for large $k$, this contradicts the defining sequence for $U$, so $U$ is torsion-free.

Consider the family $\F U/S$ of quotients parameterized by $S$. We claim that $\F U$ is a complete family of \emph{prioritary} sheaves.  A prioritary sheaf is a torsion-free sheaf $U$ such that $$\Ext^2(U,U(-1))=0.$$ Dr\'{e}zet \cite{DrezetBeilinson} shows that if $\alpha<\beta$ are two exceptional slopes then $\Ext^i(E_\alpha,E_\beta)=0$ for $i>0$.  From this vanishing theorem, it follows easily that the sheaves parameterized by $S$ are all prioritary.

To see $\F U/S$ is a complete family, fix an $s\in S$, let $U_s$ be the corresponding prioritary sheaf, and consider the Kodaira-Spencer map $$\Hom(A,B) = T_s S \to \Ext^1(U_s,U_s);$$ we must show this map is surjective.  The Kodaira-Spencer map factors as the composition $$\Hom(A,B)\xrightarrow{i} \Hom(A,U_s)\xrightarrow{j} \Ext^1(U_s,U_s),$$ where the maps come from the natural long exact sequences  $$\Hom(A,B)\xrightarrow{i} \Hom(A,U_s) \to \Ext^1(A,A)$$ and $$\Hom(A,U_s)\xrightarrow{j} \Ext^1(U_s,U_s) \to \Ext^1(B,U_s).$$  From the exact sequence $$\Ext^1(B,B) \to \Ext^1(B,U_s) \to \Ext^2(B,A)$$ and the orthogonality properties of exceptional bundles, we conclude $\Ext^1(A,A) = \Ext^1(B,U_s) = 0$, so $i$, $j$, and the Kodaira-Spencer map are all surjective.

The Artin stack $\cP(\xi)$ of prioritary sheaves with Chern character $\xi$ is an irreducible stack containing the stack $\cM(\xi)$ of semistable sheaves with Chern character $\xi$ as a dense open subset \cite{HirschowitzLaszlo}.  It follows that the general sheaf in $M(\xi)$ is parameterized by a point of $S$.
\end{proof}

From the perspective of Bridgeland stability, it is better to interpret the resolution of a general $U\in M(\xi)$ discussed in the previous proposition from the viewpoint of the derived category.

\begin{theorem}\label{triangleThm}
With the notation of this subsection, let $U\in M(\xi)$ be general.  Let $W\in D^b(\P^2)$ be the mapping cone of the canonical evaluation map $$E_{-(\alpha.\beta)}\te \Hom(E_{-(\alpha.\beta)},U) \to U,$$ so that there is a distinguished triangle $$E_{-(\alpha.\beta)} \te \Hom(E_{-(\alpha.\beta)},U) \to U \to W \to \cdot.$$ Then $W$ is isomorphic to a complex of the form $$E_{-\alpha-3} \te \C^{m_1} \to E_{-\beta} \te \C^{m_2}$$ sitting in degrees $-1$ and $0$.  Any two complexes of this form which are isomorphic to $W$ are in the same orbit under the natural action of $\GL(m_1)\times \GL(m_2)$  on the space of such complexes.
\end{theorem}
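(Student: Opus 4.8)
\emph{Sketch of the intended argument.} The plan is to pass to $D^b(\P^2)$ and combine the two-term resolution of the general $U$ from Proposition \ref{resolutionProp} with the strong exceptionality of the triad $(E_{-\alpha-3},E_{-\beta},E_{-(\alpha.\beta)})$. Write the resolution as
$$0\to E_{-\alpha-3}^{m_1}\xrightarrow{\ \psi\ }E_{-\beta}^{m_2}\oplus E_{-(\alpha.\beta)}^{m_3}\xrightarrow{\ \pi\ }U\to 0,$$
let $\psi=\binom{\psi_1}{\psi_2}$ with $\psi_1\colon E_{-\alpha-3}^{m_1}\to E_{-\beta}^{m_2}$ and $\psi_2\colon E_{-\alpha-3}^{m_1}\to E_{-(\alpha.\beta)}^{m_3}$, and let $\pi_2\colon E_{-(\alpha.\beta)}^{m_3}\to U$ be the restriction of $\pi$ to the second summand. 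The first step is to identify $\Hom^\bullet(E_{-(\alpha.\beta)},U)$ and the evaluation map. Applying $\Hom(E_{-(\alpha.\beta)},-)$ to the resolution and using that $E_{-\alpha-3}$ and $E_{-\beta}$ precede $E_{-(\alpha.\beta)}$ in the triad (so $\Ext^i(E_{-(\alpha.\beta)},E_{-\alpha-3})=\Ext^i(E_{-(\alpha.\beta)},E_{-\beta})=0$ for all $i$), together with $\Ext^\bullet(E_{-(\alpha.\beta)},E_{-(\alpha.\beta)})=\C$ in degree $0$, one sees that $\pi_2$ induces an isomorphism $\C^{m_3}=\Hom(E_{-(\alpha.\beta)},E_{-(\alpha.\beta)}^{m_3})\xrightarrow{\ \sim\ }\Hom(E_{-(\alpha.\beta)},U)$ and that $\Ext^{>0}(E_{-(\alpha.\beta)},U)=0$. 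By naturality of the evaluation morphism in its second argument, $\ev_U$ equals $\pi_2$ precomposed with the canonical isomorphism $E_{-(\alpha.\beta)}\te\Hom(E_{-(\alpha.\beta)},U)\xrightarrow{\ \sim\ }E_{-(\alpha.\beta)}^{m_3}$; hence $W$ is isomorphic in $D^b(\P^2)$ to the mapping cone of $\pi_2$.

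The second step is to compute this cone from the resolution. In $D^b(\P^2)$ we may replace $U$ by the complex $[E_{-\alpha-3}^{m_1}\xrightarrow{\psi}E_{-\beta}^{m_2}\oplus E_{-(\alpha.\beta)}^{m_3}]$ in degrees $-1,0$, under which $\pi_2$ is represented by the chain map that in degree $0$ is the split inclusion $\binom{0}{\id}\colon E_{-(\alpha.\beta)}^{m_3}\hookrightarrow E_{-\beta}^{m_2}\oplus E_{-(\alpha.\beta)}^{m_3}$. Its mapping cone is the two-term complex
$$E_{-(\alpha.\beta)}^{m_3}\oplus E_{-\alpha-3}^{m_1}\ \longrightarrow\ E_{-\beta}^{m_2}\oplus E_{-(\alpha.\beta)}^{m_3}$$
in degrees $-1,0$, with differential $(\begin{smallmatrix}0&\psi_1\\ \id&\psi_2\end{smallmatrix})$. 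Since one block of this differential is the identity of $E_{-(\alpha.\beta)}^{m_3}$, the standard Gaussian-elimination lemma for complexes cancels the two copies of $E_{-(\alpha.\beta)}^{m_3}$ and leaves the quasi-isomorphic complex $[E_{-\alpha-3}^{m_1}\xrightarrow{\psi_1}E_{-\beta}^{m_2}]$ in degrees $-1,0$. (Alternatively, the octahedral axiom applied to $E_{-(\alpha.\beta)}^{m_3}\hookrightarrow E_{-\beta}^{m_2}\oplus E_{-(\alpha.\beta)}^{m_3}\twoheadrightarrow U$, whose outer cones are $E_{-\beta}^{m_2}$ and $E_{-\alpha-3}^{m_1}[1]$, exhibits $W$ directly as a two-term complex of the required form.) This yields the first assertion, with boundary map $\psi_1$.

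For the uniqueness statement I would compute $\Hom_{D^b(\P^2)}(K,K')$ for two complexes $K=[E_{-\alpha-3}^{m_1}\xrightarrow{\phi}E_{-\beta}^{m_2}]$ and $K'=[E_{-\alpha-3}^{m_1}\xrightarrow{\phi'}E_{-\beta}^{m_2}]$ in degrees $-1,0$, via the standard spectral sequence expressing this group through the groups $\Ext^q(K^i,(K')^j)$. Because $E_{-\alpha-3}$ precedes $E_{-\beta}$ in the triad, $\Ext^i(E_{-\beta},E_{-\alpha-3})=0$ for all $i$ and $\Ext^i(E_{-\alpha-3},E_{-\beta})=0$ for $i>0$; moreover $\Hom(E_{-\alpha-3}^{m_1},E_{-\alpha-3}^{m_1})=\Mat_{m_1}(\C)$ and $\Hom(E_{-\beta}^{m_2},E_{-\beta}^{m_2})=\Mat_{m_2}(\C)$ with vanishing higher self-$\Ext$'s, since both bundles are exceptional. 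The spectral sequence therefore degenerates to give $\Hom_{D^b(\P^2)}(K,K')=\{(f,g)\in\Mat_{m_1}(\C)\times\Mat_{m_2}(\C):\phi'f=g\phi\}$; in particular every morphism is an honest chain map, and there are no nontrivial homotopies because $\Hom(E_{-\beta},E_{-\alpha-3})=0$. Hence an isomorphism $K\xrightarrow{\ \sim\ }K'$ is a pair $(f,g)$ with $f\in\GL(m_1)$, $g\in\GL(m_2)$ and $\phi'=g\phi f^{-1}$, which says exactly that $\phi$ and $\phi'$ lie in one orbit of the natural $\GL(m_1)\times\GL(m_2)$-action. Taking $\phi=\psi_1$ finishes the proof.

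The step I expect to require the most care is the bookkeeping in the cone computation---tracking degrees and signs through the Gaussian elimination (equivalently, checking that the connecting morphism in the octahedron is precisely $\psi_1$ rather than merely some map $E_{-\alpha-3}^{m_1}\to E_{-\beta}^{m_2}$)---together with the verification that $\ev_U$ agrees with $\pi_2$ up to automorphism, which is entirely a consequence of the triad's orthogonality vanishings.
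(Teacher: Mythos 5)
Your argument is correct and follows essentially the same route as the paper: the cone of the evaluation map is computed from the resolution $0\to E_{-\alpha-3}^{m_1}\to E_{-\beta}^{m_2}\oplus E_{-(\alpha.\beta)}^{m_3}\to U\to 0$ exactly as in the paper's proof, and your spectral-sequence computation of $\Hom_{D^b(\P^2)}(K,K')$ is a repackaging of the paper's Lemma \ref{complexMap}, which likewise uses the triad's orthogonality to show that derived-category morphisms between such two-term complexes are honest chain maps with no nontrivial homotopies. Your explicit verification that the evaluation map agrees with $\pi_2$ up to the canonical isomorphism $\C^{m_3}\cong\Hom(E_{-(\alpha.\beta)},U)$ is a detail the paper leaves implicit, and you carry it out correctly.
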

\begin{proof}
It is easy to show that if $$0\to A\to B\oplus C \to D\to 0$$ is an exact sequence of sheaves, then the mapping cone of $C\to D$ is isomorphic to the complex $A\to B$ sitting in degrees $-1$ and $0$.  The uniqueness statement is implied by the next lemma.
\end{proof}

\begin{lemma}\label{complexMap}
Consider a pair of two-term complexes
$$W = E_{-\alpha-3} \te \C^{m_1}\to E_{-\beta} \te \C^{m_2}$$ and
$$W' = E_{-\alpha-3} \te \C^{m_1'}\to E_{-\beta} \te \C^{m_2'},$$ each sitting in degrees $-1$ and $0$.  Every homomorphism $W\to W'$ in the derived category $D^b(\P^2)$ is realized by a homomorphism of complexes, so $$\Hom_{D^b(\P^2)}(W,W') = \Hom_{\Kom(\P^2)}(W,W').$$
\end{lemma}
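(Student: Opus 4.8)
The plan is to compute $\Hom_{D^b(\P^2)}(W,W')$ with the standard hyperext spectral sequence of the pair of complexes and to observe that the triad structure forces it to collapse onto the complex of honest chain maps. Throughout write $W^{-1}=E_{-\alpha-3}^{m_1}$, $W^0=E_{-\beta}^{m_2}$, $W'^{-1}=E_{-\alpha-3}^{m_1'}$, $W'^0=E_{-\beta}^{m_2'}$, and recall that $(E_{-\alpha-3},E_{-\beta},E_{-(\alpha.\beta)})$ is a triad (see \S\ref{sec-beilinson}), so that $(E_{-\alpha-3},E_{-\beta})$ is a strong exceptional pair with $E_{-\alpha-3}$ listed first.

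The first step is to record the relevant vanishings. Because the bundles of a triad form a strong exceptional collection, $\Ext^q(E_{-\alpha-3},E_{-\beta})=0$ for every $q>0$, while $\Ext^q(E_{-\beta},E_{-\alpha-3})=0$ for \emph{all} $q$. Since $E_{-\alpha-3}$ and $E_{-\beta}$ are exceptional, they are rigid and, by Serre duality and stability, $\Ext^2(E,E)\cong\Hom(E,E(-3))^*=0$; hence $\Ext^q(E_{-\alpha-3},E_{-\alpha-3})=\Ext^q(E_{-\beta},E_{-\beta})=0$ for $q>0$. Combining these, $\Ext^q(W^i,W'^j)=0$ for every $q>0$ and all $i,j\in\{-1,0\}$, and separately $\Hom(W^0,W'^{-1})=\Hom(E_{-\beta}^{m_2},E_{-\alpha-3}^{m_1'})=0$.

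The second step is to run the spectral sequence. For bounded complexes there is a convergent spectral sequence
\[ E_1^{p,q}=\bigoplus_i \Ext^q(W^i,W'^{i+p})\ \Longrightarrow\ \Hom_{D^b(\P^2)}(W,W'[p+q]), \]
with $d_1$ on the row $q=0$ the usual differential of the Hom-complex of two complexes. By the previous step $E_1^{p,q}=0$ for every $q>0$, so the spectral sequence degenerates at $E_2$ and $\Hom_{D^b(\P^2)}(W,W'[n])$ is the $n$-th cohomology of the complex $C^\bullet$ with $C^r=\bigoplus_{j-i=r}\Hom(W^i,W'^j)$ and its usual differential. Specializing to $n=0$: the term $C^{-1}=\Hom(W^0,W'^{-1})$ vanishes, so $H^0(C^\bullet)=\ker(C^0\to C^1)$, which is by definition the space of maps of complexes $W\to W'$. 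Therefore $\Hom_{D^b(\P^2)}(W,W')=\Hom_{\Kom(\P^2)}(W,W')$.

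I do not anticipate a genuine obstacle: the argument is pure homological bookkeeping, and the one point that really uses the collection (not just rigidity of the individual bundles) is twofold — the vanishing of $\Ext^{>0}(E_{-\alpha-3},E_{-\beta})$ kills the higher rows of the spectral sequence, while the vanishing of $\Hom(E_{-\beta},E_{-\alpha-3})$ kills the would-be null-homotopies, so that chain maps coincide with morphisms in the homotopy category. Equivalently one could phrase everything through the tilting equivalence $D^b(\P^2)\simeq D^b(\operatorname{mod}\text{-}\Lambda)$ with $\Lambda=\operatorname{End}(E_{-\alpha-3}\oplus E_{-\beta}\oplus E_{-(\alpha.\beta)})$: under it $W$ and $W'$ become bounded complexes of projective $\Lambda$-modules, for which $\Hom_{D^b}=\Hom_{K^b}$ is automatic, and the vanishing of $\Hom(E_{-\beta},E_{-\alpha-3})$ again removes the homotopies.
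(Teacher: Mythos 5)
Your argument is correct, but it takes a genuinely different route from the paper's. The paper proves the lemma by a hands-on diagram chase: writing $A = E_{-\alpha-3}\te\C^{m_1}$ and $B = E_{-\beta}\te\C^{m_2}$, it uses the distinguished triangle $B\to W\to A[1]\to\cdot$ and the resulting long exact sequences to identify $\Hom(A[1],W')$, $\Hom(B,W')$, and the image of $\Hom(W,W')\to\Hom(B,W')$ one at a time, assembling $\Hom(W,W')$ as the space of commuting squares by hand. You instead invoke the hyperext spectral sequence $E_1^{p,q}=\bigoplus_i\Ext^q(W^i,W'^{i+p})\Rightarrow\Hom_{D^b(\P^2)}(W,W'[p+q])$, note that the strong exceptionality of the pair $(E_{-\alpha-3},E_{-\beta})$ (together with rigidity and $\Ext^2(E,E)=0$ for exceptional bundles) kills every row $q>0$, so that $\Hom_{D^b(\P^2)}(W,W')$ is $H^0$ of the Hom-complex, i.e.\ chain maps modulo homotopy; the vanishing $\Hom(E_{-\beta},E_{-\alpha-3})=0$ then removes the homotopies. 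The two proofs rest on exactly the same $\Ext$-vanishings, but yours is more systematic: it computes $\Hom_{D^b(\P^2)}(W,W'[n])$ for all $n$ at once and isolates the two inputs cleanly (higher $\Ext$'s kill the extra rows; the absence of backwards maps kills the null-homotopies), and the tilting reformulation you mention packages this conceptually. The paper's version is more elementary and self-contained, and has the minor advantage of exhibiting explicitly which maps of complexes realize a given derived-category morphism. Both arguments are complete.
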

\begin{proof}
Throughout the proof, we write $\Hom(-,-)$ for $\Hom_{D^b(\P^2)}(-,-)$. Let $A=E_{-\alpha-3}\te \C^{m_1}$ and $B = E_{-\beta} \te \C^{m_{2}}$, and similarly define $A',B'$ for $W'$.  We take the maps $A\to B$ and $A'\to B'$ as implicit.  

We have a distinguished triangle $$B\to W\to A[1]\to\cdot,$$ and similarly for $W'$.  We obtain a long exact sequence $$0\to \Hom(A[1],W')\to \Hom(W,W')\to \Hom(B,W')\to \Hom(A,W').$$ (The left zero comes from $\Hom(B[1],W')=0$, which follows from the exact sequence $$\Ext^{-1}(B,B)=\Hom(B[1],B)\to \Hom(B[1],W')\to \Hom(B[1],A[1]) = \Hom(B,A)$$ by properties of exceptional bundles.)  Next, the sequence $$0= \Hom(A[1],B')\to \Hom(A[1],W')\to \Hom(A[1],A'[1])\to \Hom(A[1],B'[1])$$ identifies $\Hom(A[1],W')$ with the space of homomorphisms $A\to A'$ such that the composition $$A\to A'\to B'$$ is zero; that is, homomorphisms $A[1]\to W'$ are just the homomorphisms of underlying complexes.

On the other hand, homomorphisms $A\to W'$ in the derived category are \emph{not} the same thing as homomorphisms of the underlying complexes.  We have a sequence $$ \Hom(A,A') \to \Hom(A,B')\to \Hom(A,W') \to \Hom(A,A'[1])=0.$$ Here $\Hom(A,B')$ is the space of homomorphisms of complexes $A\to W'$; such a homomorphism $A\to B'$ induces the zero homomorphism in $\Hom(A,W')$ if it factors as $A\to A'\to B'$.

Finally, we identify the space $\Hom(B,W')$ as the space of homomorphisms of underlying complexes, as the exact sequence $$0=\Hom(B,A)\to \Hom(B,B')\to \Hom(B,W')\to \Hom(B,A[1])=0$$ gives an isomorphism $\Hom(B,W')\cong \Hom(B,B')$.  Then the image of the map $\Hom(W,W')\to \Hom(B,W')$ is the kernel of $\Hom(B,W')\to \Hom(A,W')$.  This kernel is identified with those homomorphisms $B\to B'$ such that the composition $A\to B\to B'$ is zero when viewed as giving a homomorphism $A\to W'$; in light of the preceding paragraph, this means precisely that the composition $A\to B'$ factors as $A\to A'\to B'$.  Thus the image of $\Hom(W,W')\to \Hom(B,W')$ is identified with those maps of complexes 
$$\xymatrix{
& A'\ar[d]\\
B \ar[r] & B'}$$
which can be lifted to maps of complexes $$\xymatrix{
A \ar[r]\ar[d]& A'\ar[d]\\
B \ar[r] & B'}$$ As the kernel of $\Hom(W,W')\to \Hom(B,W')$ consists precisely of maps of complexes $$\xymatrix{
A \ar[r]\ar[d]& A'\ar[d]\\
B \ar[r]^0 & B'},$$ we conclude $\Hom(W,W')$ consists precisely of the maps of complexes.
\end{proof}

\begin{remark} For experts on Bridgeland stability, we remark that the resolution we have found arises from Bridgeland stability.  We refer the reader to \cite[\S 8 and \S 9]{HuizengaPaper2} for the necessary definitions and  results.  We use the notation in that paper. The precise theorem is as follows.

\begin{theorem}\label{bridgelandThm}
Let $U\in M(\xi)$ be general.  Let  $\Lambda = W(U,E_{-(\alpha.\beta)})$ be the wall in the $st$-plane of stability conditions where $U$ and $E_{-(\alpha.\beta)}$ have the same $\mu_{s,t}$-slope.  There is an exact sequence $$0\to E_{-(\alpha.\beta)}\te \Hom(E_{-(\alpha.\beta)},U) \to U \to W\to 0$$ in the corresponding categories $\cA_s$.  The kernel and cokernel objects are $\sigma_{s,t}$-semistable outside of walls nested inside $\Lambda$, so $U$ is $\sigma_{s,t}$-semistable along and outside $\Lambda$, but not stable inside $\Lambda$.  The wall $\Lambda$ has center $(s_0,0)$ satisfying $s_0 = -\mu^+-\frac{3}{2}$, where $\mu^+$ is the corresponding orthogonal slope to $\xi$.
\end{theorem}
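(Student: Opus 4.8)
The plan is to run the whole argument inside the Bridgeland stability conditions $\sigma_{s,t}$ on $D^b(\P^2)$ and their tilted hearts $\cA_s$, in the notation of \cite[\S 8--9]{HuizengaPaper2}. First I would fix a real number $s$ with $-\alpha-3\leq s<-\beta$; such $s$ exist since $\beta-\alpha\leq 1$, and the value $s_0=-\mu^+-\frac{3}{2}$ lies in this range because $\mu^+\in I_{\alpha.\beta}$ (from the construction of the corresponding orthogonal invariants; cf.\ the proof of Lemma \ref{lem-estimates}) together with the elementary estimates $x_{\alpha.\beta}<\frac{1}{2}$ and $\beta-\alpha\leq 1$. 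For such $s$, the sheaves $E_{-(\alpha.\beta)}$, $E_{-\beta}$ and $U$ are slope-semistable of slope $>s$ and hence lie in $\cT_s\subset\cA_s$, while $E_{-\alpha-3}$ has slope $\leq s$, so $E_{-\alpha-3}[1]\in\cF_s[1]\subset\cA_s$. In particular $E_{-(\alpha.\beta)}\te\Hom(E_{-(\alpha.\beta)},U)\in\cA_s$, and $W\in\cA_s$ as well: by Theorem \ref{triangleThm} it is represented by the complex $[E_{-\alpha-3}\te\C^{m_1}\to E_{-\beta}\te\C^{m_2}]$ in degrees $-1,0$, hence is an extension in $\cA_s$ of $E_{-\alpha-3}\te\C^{m_1}[1]$ by $E_{-\beta}\te\C^{m_2}$. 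Since all three terms of the distinguished triangle of Theorem \ref{triangleThm} lie in $\cA_s$, that triangle is a short exact sequence in $\cA_s$; here the identification $\Hom(E_{-(\alpha.\beta)},U)\cong\C^{m_3}$ follows from the resolution of Proposition \ref{resolutionProp} and the vanishings $\Ext^\bullet(E_{-(\alpha.\beta)},E_{-\beta})=\Ext^\bullet(E_{-(\alpha.\beta)},E_{-\alpha-3})=0$, which hold because $(E_{-\alpha-3},E_{-\beta},E_{-(\alpha.\beta)})$ is an exceptional collection.

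\textbf{The wall and its center.} Next I would locate the wall $\Lambda=W(U,E_{-(\alpha.\beta)})$. A direct computation from the formulas for $\Re Z_{s,t}$ and $\Im Z_{s,t}$ in terms of slope and discriminant shows that, for two positive-rank Chern characters $v=(r_v,\mu_v,\Delta_v)$ and $w=(r_w,\mu_w,\Delta_w)$ with $\mu_v\neq\mu_w$, the numerical wall where $\mu_{s,t}(v)=\mu_{s,t}(w)$ is the semicircle centered on the $s$-axis at $s_0=\frac{\mu_v+\mu_w}{2}-\frac{\Delta_v-\Delta_w}{\mu_v-\mu_w}$. Applying this with $v=\xi$ and $w=\xi_{-(\alpha.\beta)}$ (so $\mu_w=-(\alpha.\beta)$ and $\Delta_w=\Delta_{\alpha.\beta}$) makes sense since $-(\alpha.\beta)<\mu(\xi)$ by Lemma \ref{stupidInequalityLemma}, which also gives positivity of the radius. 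To match the resulting center with $-\mu^+-\frac{3}{2}$, I would subtract the two defining equations $\Delta^+=P(\mu(\xi)+\mu^+)-\Delta(\xi)$ and $\Delta^+=P(-(\alpha.\beta)+\mu^+)-\Delta_{\alpha.\beta}$ of the parabolas meeting at $Q_\xi\cap Q_{\xi_{-(\alpha.\beta)}}=(\mu^+,\Delta^+)$; applying the identity $P(x)-P(y)=\frac{1}{2}(x-y)(x+y+3)$ and solving for $\mu^+$ gives $-\mu^+-\frac{3}{2}=\frac{\mu(\xi)-(\alpha.\beta)}{2}-\frac{\Delta(\xi)-\Delta_{\alpha.\beta}}{\mu(\xi)+(\alpha.\beta)}$, which is exactly $s_0$. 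Additivity of $Z_{s,t}$ on the short exact sequence, combined with $\mu_{s,t}(E_{-(\alpha.\beta)})=\mu_{s,t}(U)$ along $\Lambda$ by definition of $\Lambda$, then forces $\mu_{s,t}(W)=\mu_{s,t}(U)$ along $\Lambda$.

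\textbf{Semistability off nested walls.} It remains to show that the kernel and cokernel of the short exact sequence are $\sigma_{s,t}$-semistable off the walls nested inside $\Lambda$; the semistability of $U$ along and outside $\Lambda$ and its non-stability inside $\Lambda$ then follow formally. The kernel $E_{-(\alpha.\beta)}\te\Hom(E_{-(\alpha.\beta)},U)$ is a direct sum of copies of the exceptional bundle $E_{-(\alpha.\beta)}$, which is $\sigma_{s,t}$-stable outside a semicircular wall that a numerical comparison places strictly inside $\Lambda$. For $W$ I would argue as in \cite[\S 9]{HuizengaPaper2}: it is an extension of $E_{-\alpha-3}\te\C^{m_1}[1]$ by $E_{-\beta}\te\C^{m_2}$, each of whose constituents is $\sigma_{s,t}$-semistable off walls nested inside $\Lambda$, and the exceptional-collection vanishings (notably $\Ext^{>0}(E_{-\alpha-3},E_{-\beta})=0$) rigidify the extension enough that no subobject of $W$ produces a wall properly containing $\Lambda$. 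Granting this, a subobject of $U$ destabilizing it at some $\sigma_{s,t}$ off the walls nested inside $\Lambda$ would yield a wall of $U$ through that point; but on such a wall $U$ is built from Jordan--H\"{o}lder factors drawn from copies of $E_{-(\alpha.\beta)}$ and the factors of $W$, all of whose walls are nested inside $\Lambda$, so the wall of $U$ must itself be $\Lambda$ or nested inside it -- a contradiction. Hence $U$ is $\sigma_{s,t}$-semistable along and outside $\Lambda$, while along $\Lambda$ the short exact sequence exhibits the proper subobject $E_{-(\alpha.\beta)}\te\Hom(E_{-(\alpha.\beta)},U)$ of equal $\mu_{s,t}$-slope, whose slope becomes strictly larger just inside $\Lambda$; thus $U$ is not $\sigma_{s,t}$-stable inside $\Lambda$.

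\textbf{Main obstacle.} I expect the third step to be the crux: bounding the walls of $W$ and of the kernel and verifying they are nested inside $\Lambda$. This is exactly where the choice of the triad $(E_{-\alpha-3},E_{-\beta},E_{-(\alpha.\beta)})$ attached to the corresponding exceptional slope, and the numerical position of $Q_\xi$ relative to the arcs of the curve $\delta(\mu)=\Delta$, are used; everything in the first two steps is heart-membership bookkeeping and the semicircle computation. Concretely I would execute this step by importing the wall-crossing analysis of \cite[\S 9]{HuizengaPaper2} and adapting it to the present triad.
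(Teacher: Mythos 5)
Your overall strategy is the same as the paper's: realize the distinguished triangle of Theorem \ref{triangleThm} as a short exact sequence in the tilted hearts, compute the wall $\Lambda$, and reduce everything else to the wall-crossing analysis of \cite[\S 8--9]{HuizengaPaper2}. Your heart-membership bookkeeping is fine, and your computation of the center $s_0=-\mu^+-\frac{3}{2}$ by subtracting the two parabola equations for $Q_\xi$ and $Q_{\xi_{-(\alpha.\beta)}}$ at $(\mu^+,\Delta^+)$ is correct; it is a by-hand version of what the paper extracts from \cite[Proposition 4.1]{CoskunHuizenga}.

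The genuine gap is at exactly the step you flag as the ``main obstacle'' and then defer: you never establish that the walls of the kernel and of $W$ are nested inside $\Lambda$, and in particular you compute the center of $\Lambda$ but not its radius. The paper is explicit that this radius bound is the \emph{only} step of \cite[Theorem 8.5]{HuizengaPaper2} that does not generalize immediately, so it cannot simply be ``imported.'' The argument you are missing is short: the walls where $E_{-(\alpha.\beta)}$ and $W$ can be destabilized all have radius at most $\sqrt{5}/2$ (this part does carry over from \cite[\S 9]{HuizengaPaper2}, since the constituents are exceptional bundles with discriminant less than $\frac{1}{2}$), and each such wall is either nested inside or disjoint from $\Lambda$; so what is needed is that $\Lambda$ itself has radius $\rho>\sqrt{5}/2$. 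Because $\xi^+$ is orthogonal to both $\xi$ and $\xi_{-(\alpha.\beta)}$, \cite[Proposition 4.1]{CoskunHuizenga} gives not only $s_0=-\mu^+-\frac{3}{2}$ but also $2\Delta^+=\rho^2-\frac{1}{4}$, and then $\Delta^+>\frac{1}{2}$ yields $\rho>\sqrt{5}/2$ at once. Without this quantitative input, your assertions that the wall of $E_{-(\alpha.\beta)}$ lies ``strictly inside $\Lambda$'' and that ``no subobject of $W$ produces a wall properly containing $\Lambda$'' are unsupported, and the final semistability claim for $U$ along and outside $\Lambda$ does not follow. Supplying the identity $2\Delta^+=\rho^2-\frac{1}{4}$ (or an equivalent direct computation of the radius of the numerical wall $W(\xi,\xi_{-(\alpha.\beta)})$) closes the gap.
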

\begin{proof}
As in \S 9 of \cite{HuizengaPaper2}, it is easy to show that $E_{-(\alpha.\beta)}$ and $W$ are $\sigma_{s,t}$-semistable for $(s,t)$ outside semicircular walls of radius at most $\sqrt{5}/2$.  It is also easy to see that each of these walls is either nested inside or outside $\Lambda$.

The only step of the proof of Theorem 8.5 from \cite{HuizengaPaper2} which does not generalize immediately is the proof that $\Lambda$ has radius at least $\sqrt{5}/2$. Furthermore, recent results simplify the proof of this fact greatly.  We will see later that the corresponding orthogonal Chern character $\xi^+$ satisfies $$(\xi^+,\xi)=(\xi^+,\xi_{-(\alpha.\beta)})=0.$$ Then by Proposition 4.1 from \cite{CoskunHuizenga}, the wall $\Lambda$ is the  semicircular wall with center $(s,0)$ and radius $\rho >0$ given by $$\mu^+ = -s-\frac{3}{2} \qquad \textrm{and} \qquad 2\Delta^+ = \rho^2-\frac{1}{4}.$$  The inequality $\rho > \sqrt{5}/2$ follows at once from $\Delta^+>\frac{1}{2}$.
\end{proof}
\end{remark}

\subsection{The spectral sequence when $(\xi,\xi_{\alpha.\beta})<0$.}  As mentioned earlier, in this case we use the spectral sequence for $U$ associated to the triad $(E_{-(\alpha.\beta)-3},E_{-\alpha-3},E_{-\beta})$.  This time, we have 
 $$\begin{aligned}
G_{-2} &= E_{-(\alpha.\beta)-3} \\
G_{-1} &= E_{-\alpha-3}  \\
G_0 &= E_{-\beta} 
\end{aligned}\qquad\qquad
\begin{aligned} 
F_{-2} &= E_{\alpha.\beta}\\
F_{-1} &= E_{(\alpha.\beta).\beta}\\
F_{0} &= E_{\beta}.
\end{aligned}$$
It is easily shown that the general $U$ has a spectral sequence of the form 
$$\xymatrix{
E_{-(\alpha.\beta)-3}\te \C^{m_3} \ar[r] & 0 \ar[r] & 0  \\
0\ar[r] & E_{-\alpha-3}\te \C^{m_1}  \ar[r] & E_{-\beta}\te \C^{m_2}  \\
}$$
with
$$\begin{aligned}
m_1 &=\chi(U\te E_{(\alpha.\beta).\beta})\\
m_2 &=\chi(U\te E_{\beta}).\\
m_3 &=-\chi(U\te E_{\alpha.\beta})
\end{aligned}
$$
The analog of Theorem \ref{triangleThm} is  substantially nicer in this case, since the analog of the object $W$ of the derived category in Theorem \ref{triangleThm} is  a sheaf.

\begin{theorem}
With the notation of this subsection, let $U\in M(\xi)$ be general, and let $$E_{-\alpha-3}\te \C^{m_1}\to E_{-\beta}\te \C^{m_2}$$ be the complex appearing in the generalized Beilinson spectral sequence for $U$, sitting in degrees $-1$ and $0$.  The map in this complex is injective, so the complex is isomorphic to a sheaf $W$ with resolution $$0\to E_{-\alpha-3}\te \C^{m_1} \to E_{-\beta}\te \C^{m_2}\to W\to 0.$$  Any two resolutions of $W$ of this form are conjugate under the $\GL(m_1)\times \GL(m_2)$ action.  Finally, the sheaf $U$ fits in a triangle $$W\to U \to E_{-(\alpha.\beta)-3}[1] \te \Hom(U,E_{-(\alpha.\beta)-3}[1])^*\to \cdot,$$ where the second map is the canonical one.  
\end{theorem}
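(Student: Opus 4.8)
The plan is to follow the blueprint of Proposition~\ref{resolutionProp}, Theorem~\ref{triangleThm} and Lemma~\ref{complexMap}, adapted to the feature that here the ``extra'' Beilinson term $E_{-(\alpha.\beta)-3}\te\C^{m_3}$ sits in cohomological degree $-1$ rather than $0$, so that after the spectral sequence collapses $U$ becomes a \emph{quotient} of the cohomology sheaf $W$ of the two-term complex, rather than an extension by it. I would begin with the analog of Lemma~\ref{lem-estimates}: using that the corresponding exceptional slope of $\xi$ is $\alpha.\beta$, that $(\xi,\xi_{\alpha.\beta})<0$ (so $(\mu^+,\Delta^+)=Q_\xi\cap Q_{\xi_{-(\alpha.\beta)-3}}$), and the same parabola-translation estimates as in Lemma~\ref{lem-estimates}, one checks that for \emph{every} $U\in M(\xi)$ the slopes $\mu(U\te E_{\alpha.\beta})$, $\mu(U\te E_{(\alpha.\beta).\beta})$, $\mu(U\te E_\beta)$ exceed $-3$, so the $H^2$-row of the $E_1$-page vanishes by semistability, and moreover $\chi(U\te E_{\alpha.\beta})<0<\chi(U\te E_{(\alpha.\beta).\beta}),\ \chi(U\te E_\beta)$. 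Hence for every $U$ the $E_1$-page is supported on the two lower rows, and what must be shown for the \emph{general} $U$ is that the three twists are nonspecial (so the page takes the displayed form, with $H^0(U\te E_{\alpha.\beta})=0$ and the $H^1$ of the other two vanishing) and that the differential $d_2\colon E_{-(\alpha.\beta)-3}\te\C^{m_3}\to\coker(d_1)$ is injective as a map of sheaves.

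I would establish these facts by a construction, as in Proposition~\ref{resolutionProp}. With $m_1,m_2,m_3$ as in the statement, take a general $\varphi\colon E_{-\alpha-3}\te\C^{m_1}\oplus E_{-(\alpha.\beta)-3}\te\C^{m_3}\to E_{-\beta}\te\C^{m_2}$. Since $\sHom(E_{-\alpha-3},E_{-\beta})$ and $\sHom(E_{-(\alpha.\beta)-3},E_{-\beta})$ are globally generated, so is the relevant $\sHom$ sheaf, and a Bertini argument \cite[Proposition 2.6]{HuizengaPaper2} together with the rank inequality $m_2\rk E_{-\beta}>m_1\rk E_{-\alpha-3}+m_3\rk E_{-(\alpha.\beta)-3}$ (equivalent to $r(\xi)>0$ by the virtual Chern-character computation of Step~1 of Proposition~\ref{resolutionProp}) shows $\varphi$ is injective with torsion-free cokernel $U$ of Chern character $\xi$. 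Restricting $\varphi$ to the first summand gives an injective $\psi\colon E_{-\alpha-3}\te\C^{m_1}\to E_{-\beta}\te\C^{m_2}$ (general of its type and injective since $m_2\rk E_{-\beta}>m_1\rk E_{-\alpha-3}$) with torsion-free cokernel $W$; this proves the injectivity assertion and the resolution $0\to E_{-\alpha-3}\te\C^{m_1}\to E_{-\beta}\te\C^{m_2}\to W\to 0$. Then $U=\coker\varphi$ is the cokernel of the induced map $E_{-(\alpha.\beta)-3}\te\C^{m_3}\to W$, which is injective for general $\varphi$, giving the short exact sequence $0\to E_{-(\alpha.\beta)-3}\te\C^{m_3}\to W\to U\to 0$. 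As in Steps~3--4 of Proposition~\ref{resolutionProp}, orthogonality of the exceptional bundles of the triad gives the expected Beilinson vanishings for this $U$, so its spectral sequence collapses to the displayed form with two-term complex $W$; and the family of such $U$ is a complete family of prioritary sheaves — Dr\'{e}zet's vanishing $\Ext^i(E_\gamma,E_{\gamma'})=0$ for $\gamma<\gamma'$, $i>0$ \cite{DrezetBeilinson} gives priority, and a Kodaira--Spencer computation exactly as in Step~4 of Proposition~\ref{resolutionProp} (all obstruction groups vanishing by exceptionality) gives completeness — so the general $U\in M(\xi)$ arises this way.

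Uniqueness of the resolution of $W$ up to $\GL(m_1)\times\GL(m_2)$ follows from Lemma~\ref{complexMap} exactly as the uniqueness clause of Theorem~\ref{triangleThm}: any $D^b$-isomorphism between two such complexes representing $W$, together with its inverse, are chain maps by the lemma, $m_1,m_2$ are forced to agree by comparing $\ch W$, and composing shows each chain map is a pair of matrices on the $\C^{m_i}$ (using simplicity of $E_{-\alpha-3}$ and $E_{-\beta}$) lying in $\GL(m_1)\times\GL(m_2)$. Finally, to obtain the triangle, rotate the short exact sequence to $W\to U\to E_{-(\alpha.\beta)-3}[1]\te\C^{m_3}\to W[1]$. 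Applying $\Hom(-,E_{-(\alpha.\beta)-3})$ to $0\to E_{-(\alpha.\beta)-3}\te\C^{m_3}\to W\to U\to 0$ and using $\Ext^i(W,E_{-(\alpha.\beta)-3})=0$ for all $i$ — immediate from the resolution of $W$, since $E_{-\alpha-3}$ and $E_{-\beta}$ both come after $E_{-(\alpha.\beta)-3}$ in the triad — together with $\dim\Hom(E_{-(\alpha.\beta)-3},E_{-(\alpha.\beta)-3})=1$, produces a canonical isomorphism $\C^{m_3}\cong\Ext^1(U,E_{-(\alpha.\beta)-3})^*=\Hom(U,E_{-(\alpha.\beta)-3}[1])^*$. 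Substituting turns the triangle into the one asserted, and chasing the connecting homomorphism $\Hom(E_{-(\alpha.\beta)-3}\te\C^{m_3},E_{-(\alpha.\beta)-3})\xrightarrow{\ \sim\ }\Ext^1(U,E_{-(\alpha.\beta)-3})$ through this identification shows the map $U\to E_{-(\alpha.\beta)-3}[1]\te\Hom(U,E_{-(\alpha.\beta)-3}[1])^*$ corresponds to the identity under $\Hom\big(U,\,E_{-(\alpha.\beta)-3}[1]\te\Hom(U,E_{-(\alpha.\beta)-3}[1])^*\big)=\operatorname{End}\Hom(U,E_{-(\alpha.\beta)-3}[1])$; that is, it is the canonical co-evaluation.

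The main obstacle is the same one already confronted in Proposition~\ref{resolutionProp}: showing that the general $U\in M(\xi)$ genuinely realizes the collapsed spectral sequence — equivalently, the nonspeciality of the three twists and the injectivity of $\varphi$ (hence of $d_2$). This needs the numerical positivity of the analog of Lemma~\ref{lem-estimates} to put all ranks in the Bertini range and keep the $\sHom$ sheaves globally generated, together with the prioritary-sheaf/completeness machinery to transport the conclusion from the constructed family to the generic member of $M(\xi)$. Everything downstream — the uniqueness and the passage to the distinguished triangle — is formal bookkeeping with the strong exceptional collection of the triad, and is in fact \emph{simpler} than in Theorem~\ref{triangleThm} because here $W$ is an honest sheaf.
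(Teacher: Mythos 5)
Your proposal is correct and follows exactly the route the paper intends: the paper omits the proof of this theorem, deferring to the arguments of Proposition \ref{resolutionProp}, Lemma \ref{complexMap}, and Theorem \ref{triangleThm} for the case $(\xi,\xi_{\alpha.\beta})>0$, and your write-up is precisely that adaptation (the numerical estimates for the triad $(E_{-(\alpha.\beta)-3},E_{-\alpha-3},E_{-\beta})$, the Bertini/prioritary-completeness construction of the general $U$ as a cokernel, the $\GL(m_1)\times\GL(m_2)$ uniqueness via Lemma \ref{complexMap}, and the identification $\C^{m_3}\cong\Hom(U,E_{-(\alpha.\beta)-3}[1])^*$ from the vanishing of $\Ext^\bullet(W,E_{-(\alpha.\beta)-3})$). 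All the auxiliary claims you invoke (signs of $\chi(U\te E_{\alpha.\beta})$, $\chi(U\te E_{(\alpha.\beta).\beta})$, $\chi(U\te E_\beta)$; the $H^2$-vanishing; the relevant orthogonality and global generation statements) check out.
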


The obvious analog of Theorem \ref{bridgelandThm} holds here as well.

\subsection{The spectral sequence when $(\xi,\xi_{\alpha.\beta})=0$}\label{ssec-equality}  In this case, either of the previous spectral sequences degenerate to an exact sequence $$0\to E_{-\alpha-3}\te \C^{m_1}\to E_{-\beta}\te \C^{m_2} \to U\to 0,$$ where $m_1 = -\chi(U\te E_\alpha)$ and $m_2 = \chi(U\te E_\beta)$.  Comparing the two different spectral sequences shows we  have equalities 
$$\begin{aligned}-\chi(U \te E_{\alpha}) &= \chi(U\te E_{(\alpha.\beta).\beta})\\ -\chi(U\te E_{\alpha.(\alpha.\beta)}) &= \chi(U\te E_\beta).\end{aligned}$$
  We have chosen to represent each $m_i$ by its simplest expression.  Furthermore, the next result follows easily from analyzing the first spectral sequence for $U$.

\begin{proposition}\label{exceptionalDivProp}
With the notation of this subsection, a sheaf $U\in M^s(\xi)$ lies in the Brill-Noether locus $D_{E_{\alpha.\beta}}$ if and only if it does not have a resolution of the form $$0\to E_{-\alpha-3} \te \C^{m_1}\to E_{-\beta}\te \C^{m_2}\to U\to 0.$$ Furthermore, $D_{E_{\alpha.\beta}}$ is a divisor.
\end{proposition}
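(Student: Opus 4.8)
The plan is to analyze the generalized Beilinson spectral sequence of Theorem~\ref{beilinsonThm} for an arbitrary $U\in M^s(\xi)$ with respect to the triad $(E_{-\alpha-3},E_{-\beta},E_{-(\alpha.\beta)})$, for which $F_{-2}=E_\alpha$, $F_{-1}=E_{\alpha.(\alpha.\beta)}$, $F_0=E_{\alpha.\beta}$.  First I would record the terms of the $E_1$--page that vanish for \emph{every} $U\in M(\xi)$.  Since $\mu(U\te E_\alpha)>-\frac{3}{2}$ (the analogue of Lemma~\ref{lem-estimates}(1), whose proof does not use $(\xi,\xi_{\alpha.\beta})>0$) and $\alpha\le \alpha.(\alpha.\beta)\le\alpha.\beta$, semistability forces $H^2(U\te F_p)=0$ for all $p$, so the $q=2$ row is zero.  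Because $(\xi,\xi_{\alpha.\beta})=0$ we have $\chi(U\te E_{\alpha.\beta})=0$, so $U\in D_{E_{\alpha.\beta}}$ --- i.e.\ $H^1(U\te E_{\alpha.\beta})\neq 0$ --- is equivalent to $H^0(U\te E_{\alpha.\beta})\neq 0$, which is precisely the statement that the column $p=0$ of the $E_1$--page is nonzero.  Thus the proposition amounts to showing: the column $p=0$ vanishes if and only if $U$ admits the stated resolution.

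For the ``if'' direction I would tensor a resolution $0\to E_{-\alpha-3}\te\C^{m_1}\to E_{-\beta}\te\C^{m_2}\to U\to 0$ by $E_{\alpha.\beta}$ and take cohomology.  The two outer bundles $E_{-\alpha-3}\te E_{\alpha.\beta}$ and $E_{-\beta}\te E_{\alpha.\beta}$ have vanishing cohomology in all degrees: $(E_\alpha,E_{\alpha.(\alpha.\beta)},E_{\alpha.\beta})$ is a triad, hence a strong exceptional collection, so $\Ext^i(E_{\alpha.\beta},E_\alpha)=0$ for all $i$, and by Serre duality together with $E_{\alpha+3}\cong E_\alpha(3)$ this gives $H^i(E_{-\alpha-3}\te E_{\alpha.\beta})\cong\Ext^{2-i}(E_{\alpha.\beta},E_\alpha)^*=0$; similarly $(E_{\alpha.\beta},E_\beta,E_{\beta+3})$ is a triad, so $\Ext^i(E_\beta,E_{\alpha.\beta})=0$ and $H^i(E_{-\beta}\te E_{\alpha.\beta})=0$.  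Hence $H^\bullet(U\te E_{\alpha.\beta})=0$ and $U\notin D_{E_{\alpha.\beta}}$.

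For the ``only if'' direction, assume $U\notin D_{E_{\alpha.\beta}}$, so $H^\bullet(U\te E_{\alpha.\beta})=0$; I want to conclude that $U$ has the resolution.  The crucial point is to propagate this vanishing along the short exact sequences $0\to G\to F\te\Hom(G,F)^*\to M\to 0$ defining the fourth bundle $M$ of a triad $(E,G,F)$.  Taking the triad $(E_{\beta-3},E_\alpha,E_{\alpha.\beta})$ gives $0\to E_\alpha\to E_{\alpha.\beta}\te\Hom(E_\alpha,E_{\alpha.\beta})^*\to M\to 0$; tensoring by $U$ and reading the long exact sequence, the vanishing $H^0(U\te E_{\alpha.\beta})=0$ yields $H^0(U\te E_\alpha)=0$.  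Taking the triad $(E_\alpha,E_{\alpha.(\alpha.\beta)},E_{\alpha.\beta})$ gives $0\to E_{\alpha.(\alpha.\beta)}\to E_{\alpha.\beta}\te\Hom(E_{\alpha.(\alpha.\beta)},E_{\alpha.\beta})^*\to M'\to 0$, and the same argument yields $H^0(U\te E_{\alpha.(\alpha.\beta)})=0$.  After these vanishings the only nonzero terms of the $E_1$--page are $E_{-\alpha-3}\te H^1(U\te E_\alpha)$ and $E_{-\beta}\te H^1(U\te E_{\alpha.(\alpha.\beta)})$, so the spectral sequence degenerates at $E_2$; since it converges to $U$ concentrated in degree $0$, the surviving differential must be injective with cokernel $U$.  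This exhibits the resolution $0\to E_{-\alpha-3}\te\C^{m_1}\to E_{-\beta}\te\C^{m_2}\to U\to 0$ with $m_1=-\chi(U\te E_\alpha)$ and $m_2=-\chi(U\te E_{\alpha.(\alpha.\beta)})=\chi(U\te E_\beta)$, completing the equivalence.

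It remains to see that $D_{E_{\alpha.\beta}}$ is a divisor.  The equivalence just proved, together with the fact recorded at the start of this subsection that the general $U\in M(\xi)$ carries such a resolution, shows that $D_{E_{\alpha.\beta}}$ is a proper closed subset and that the general $U$ is cohomologically orthogonal to $E_{\alpha.\beta}$; Proposition~\ref{BNProp} then identifies $D_{E_{\alpha.\beta}}$ with an effective Cartier divisor.  (Alternatively, one can argue directly that $D_{E_{\alpha.\beta}}$ is the degeneracy locus of a map between vector bundles of equal rank, which has pure codimension $1$ since $\chi(U\te E_{\alpha.\beta})=0$.)  The step I expect to demand the most care is the ``only if'' direction: one must establish $H^0(U\te E_\alpha)=H^0(U\te E_{\alpha.(\alpha.\beta)})=0$ for \emph{every} stable $U$ outside $D_{E_{\alpha.\beta}}$ --- not merely the general one, where these are immediate from orthogonality of exceptional bundles --- and it is the mutation sequences above that make this possible, after which one still has to verify that the collapsed spectral sequence yields a genuine two--term locally free resolution of $U$ rather than only a filtration of $U$.
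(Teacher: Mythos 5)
Your proposal is correct, and its skeleton --- the Beilinson spectral sequence for the triad $(E_{-\alpha-3},E_{-\beta},E_{-(\alpha.\beta)})$, orthogonality of exceptional bundles for the ``if'' direction, and the observation that the general sheaf has such a resolution to get properness of $D_{E_{\alpha.\beta}}$ --- matches the paper's. The one place you genuinely diverge is the ``only if'' direction. The paper does \emph{not} prove $H^0(U\te E_\alpha)=H^0(U\te E_{\alpha.(\alpha.\beta)})=0$ in advance: it sets $d_1=h^0(U\te E_\alpha)$, $d_2=h^0(U\te E_{\alpha.(\alpha.\beta)})$, writes out the two surviving rows of the $E_1$-page, and notes that convergence to a sheaf concentrated in degree $0$ would force the bottom-row differential $E_{-\alpha-3}^{d_1}\to E_{-\beta}^{d_2}$ to be an isomorphism of sheaves, which is impossible for distinct exceptional bundles unless $d_1=d_2=0$. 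You instead kill these $H^0$ groups a priori by tensoring $U$ with the coevaluation sequences $0\to G\to F\te\Hom(G,F)^*\to M\to 0$ attached to the triads $(E_{\beta-3},E_\alpha,E_{\alpha.\beta})$ and $(E_\alpha,E_{\alpha.(\alpha.\beta)},E_{\alpha.\beta})$, and then read the resolution off the fully degenerate sequence. Both arguments are valid; yours buys a cleaner $E_1$-page and makes the Brill--Noether condition visibly ``propagate down'' the exceptional collection, but it requires the extra input that these coevaluation maps are injective as sheaf maps (true for triads, and implicit in the paper's assertion that the cokernel $M$ is again an exceptional bundle fitting into a triad), whereas the paper's version stays entirely inside the spectral-sequence formalism. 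Your treatment of the divisor statement via Proposition \ref{BNProp} and the generic resolution is the same as the paper's.
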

\begin{proof}
By orthogonality properties of exceptional bundles, any stable sheaf $U$ with a resolution of the above form has $H^0(U\te E_{\alpha.\beta})=0$, and thus does not lie in $D_{E_{\alpha.\beta}}$.  This also shows $D_{E_{\alpha.\beta}}$ is a divisor.  Conversely, if $H^0(U\te E_{\alpha.\beta})=0$, then the first spectral sequence for $U$ takes the form $$\xymatrix{
E_{-\alpha-3}\te \C^{m_1+d_1} \ar[r] & E_{-\beta}\te \C^{m_2+d_2}  \ar[r] & 0  \\
E_{-\alpha-3}\te \C^{d_1} \ar[r] & E_{-\beta}\te \C^{d_2}\ar[r] & 0  \\
}$$
for some nonnegative integers $d_1,d_2$.  If either $d_1$ or $d_2$ is positive, the sequence cannot converge to a sheaf in degree $0$, so we conclude $U$ has a resolution of the expected form.
\end{proof}

\section{The Kronecker fibration}\label{sec-kronecker}

The generalized Beilinson spectral sequence described in the previous section allows us to describe a natural rational map from a moduli space $M(\xi)$ to a moduli space of Kronecker modules.  These moduli spaces have Picard rank $1$, and analyzing this fibration will allow us to determine the effective cone of $M(\xi)$.

\subsection{Moduli spaces of Kronecker modules}\label{ssec-Kronecker}  Let $V$ be an $N$-dimensional vector space with $N\geq 3$.  A linear map $e:B\te V\to A$ for $b$- and $a$-dimensional vector spaces $B$, $A$, respectively, can be viewed as a representation of the Kronecker quiver $Q$ consisting of $2$ vertices and $N$ arrows from the first vertex to the second.
$$\xymatrix{
\bullet \ar@<-3.5ex>[r] \ar@<-2.5ex>[r] \ar@<3.5ex>[r]^N \ar@<2.5ex>[r]_\vdots & \bullet 
}$$
We call such a representation a \emph{Kronecker $V$-module}.  
The \emph{dimension vector} $\udim e\in \N^2$ of  such a Kronecker module $e$ is the vector $(b,a)$.  If $f:B'\te V\to A'$ is another Kronecker $V$-module, a homomorphism $f\to e$ is a pair of maps $\beta: B'\to B$ and $\alpha: A'\to A$ making the square
$$\xymatrix{
B \te V \ar[r] & A\\
B' \te V \ar[u]^{\beta \te \id}\ar[r] & A' \ar[u]^\alpha
}$$
commute.  The space of Kronecker $V$-modules forms an abelian category.

Fix a dimension vector $(b,a)$ and vector spaces $B,A$ of dimensions $b,a$.  Every Kronecker $V$-module $e$ with $\udim e = (b,a)$ is isomorphic to an element of $\Hom_\C(B\te V,A)$.  The natural action of $\C^*$ on this space preserves the isomorphism type of the corresponding Kronecker modules, and there is an induced action of $\SL(B)\times \SL(A)$ on $\P \Hom_\C(B\te V,A)$.  A Kronecker $V$-module $e$ is \emph{(GIT)-semistable} if any (equivalently, all) corresponding points of $\P \Hom_\C(B\te V,A)$ are semistable points of this action.  We denote by $Kr_N(\udim e)$ the quotient space of semistable modules. 

The \emph{expected dimension} of $Kr_N(b,a)$ is $$\edim Kr_N(b,a)= abN-a^2-b^2+1.$$ It is a theorem of Dr\'ezet \cite{Drezet} that if the expected dimension of $Kr_N(b,a)$ is positive then stable modules with vector $(b,a)$ exist, and thus $Kr_N(b,a)$ has the expected dimension.  

For our present work, it is important to note that semistability of Kronecker modules can be detected by the existence of orthogonal modules.  To explain this, we define the  \emph{Euler characteristic} of a pair $(f,e)$ of modules by $$\chi(f,e) = \hom_Q(f,e) - \ext^1_Q(f,e);$$ all the higher Ext terms vanish.  The Euler characteristic of a pair can be computed in terms of the numerical invariants of the module and the quiver.  Precisely, if $\udim f = (b',a')$ and $\udim e = (b,a)$, then $$\chi(f,e) = b'b+a'a-Nb'a.$$ It is useful to note that $$\edim Kr_N(\udim e)=1-\chi(e,e).$$

\begin{theorem}\label{orthogThm}\cite{Schofield,SvdB}
A Kronecker $V$-module $e$ is semistable if and only if it admits a semistable \emph{left orthogonal module}, i.e. there is some semistable Kronecker $V$-module $f$ such that $$\Hom_Q(f,e) = \Ext^1_Q(f,e) = 0.$$ Semistability is also equivalent to the existence of a semistable right orthogonal module.
\end{theorem}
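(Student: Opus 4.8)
The plan is to pass from GIT-semistability to King's $\theta$-stability, prove the easy implication by a direct Euler-form computation, and reduce the existence statement in the hard implication to the theory of general representations and semi-invariants of quivers of \cite{Schofield} and \cite{SvdB}. Throughout we work in the relevant regime $\edim Kr_N(\udim e)>0$, so that $a,b\ge 1$ for $\udim e=(b,a)$. Recall (King) that GIT-semistability of $e$ coincides with $\theta_e$-semistability for $\theta_e=(a,-b)$: explicitly, $e$ is semistable if and only if $\theta_e(\udim e')\le 0$ for every subrepresentation $e'\subseteq e$ (equivalently $\theta_e(\udim e'')\ge 0$ for every quotient $e''$), noting $\theta_e(\udim e)=0$. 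The one numerical fact we need is this: for $f$ with $\udim f=(b',a')$ and $b',a>0$, the linear form $(\beta,\gamma)\mapsto\chi(f,(\beta,\gamma))=b'\beta+(a'-Nb')\gamma$ vanishes on $\udim e$ exactly when $\chi(f,e)=0$, and in that case $\chi(f,-)=\frac{b'}{a}\,\theta_e$ is a \emph{positive} multiple of $\theta_e$.

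For the implication ``$e$ admits a semistable left-orthogonal module $\Rightarrow$ $e$ is semistable'', I would argue directly: given semistable $f$ with $\Hom_Q(f,e)=\Ext^1_Q(f,e)=0$, we have $\chi(f,e)=0$ (and $b'>0$ since $a>0$), so $\chi(f,-)=\frac{b'}{a}\theta_e$. For any $e'\subseteq e$, left-exactness gives $\Hom_Q(f,e')\hookrightarrow\Hom_Q(f,e)=0$, and since the Kronecker quiver is hereditary ($\Ext^{\ge 2}_Q=0$) we get $\chi(f,e')=\hom_Q(f,e')-\ext^1_Q(f,e')=-\ext^1_Q(f,e')\le 0$, whence $\theta_e(\udim e')=\frac{a}{b'}\chi(f,e')\le 0$. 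As this holds for all $e'$, $e$ is $\theta_e$-semistable, i.e. GIT-semistable. (Both hypotheses play a role: $\Hom_Q(f,e)=0$ controls subobjects, while $\Ext^1_Q(f,e)=0$ dually controls quotients and gives the same conclusion.)

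For the converse, suppose $e$ is semistable; we must produce a semistable $f$ with $\Hom_Q(f,e)=\Ext^1_Q(f,e)=0$. The dimension vector is forced, up to positive scaling, by $\chi(f,-)\propto\theta_e$ to be $\udim f=\frac{1}{\gcd(a,b)}(a,\,Na-b)$; this is a genuine (in fact primitive) dimension vector because $Na>b$, an inequality forced by $abN-a^2-b^2+1>0$. The same inequality gives $\chi(f,f)=(a^2+b^2-Nab)/\gcd(a,b)^2\le 0$, so $\edim Kr_N(\udim f)=1-\chi(f,f)\ge 1$, and by Dr\'ezet's theorem stable modules of dimension vector $\udim f$ exist and form a dense open subset of the irreducible affine space $\operatorname{Rep}(\udim f)$. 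Since $\chi(f,e)=0$ gives $\hom_Q(f,e)=\ext^1_Q(f,e)$, it then suffices by upper semicontinuity of $\hom_Q$ in families to exhibit a \emph{single} pair $(f_0,e_0)$ of representations of the two dimension vectors with $\Hom_Q(f_0,e_0)=0$: the general such pair — in particular the general pair of \emph{semistable} modules — will then be orthogonal, proving the claim for general $e$ of dimension vector $(b,a)$. This generic-vanishing statement is the heart of the matter, and I would deduce it from \cite{Schofield} and \cite{SvdB} rather than reprove it: the Schofield semi-invariant attached to a representation $f$ is a nonzero regular function on $\operatorname{Rep}(b,a)$ whose non-vanishing locus is $\{e:\Hom_Q(f,e)=0\}$, these semi-invariants span the semi-invariant ring, and (King/Mumford) the $\theta_e$-semistable locus is the union of the non-vanishing loci of semi-invariants of weight a positive multiple of $\theta_e$; hence semistability of $e$ exhibits a suitable $f$ with $\Hom_Q(f,e)=0$, and one checks $f$ can be taken semistable. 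This structural input is the main obstacle, and it is exactly why the theorem is attributed to \cite{Schofield} and \cite{SvdB}.

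Finally, the equivalence with the existence of a semistable \emph{right}-orthogonal module follows formally from the exact contravariant duality $e\mapsto\tilde e$ on Kronecker $V$-modules (sending $B\te V\to A$ to $A^*\te V\to B^*$, so $\udim\tilde e=(a,b)$), which preserves GIT-semistability and satisfies $\Hom_Q(f,e)\cong\Hom_Q(\tilde e,\tilde f)$ and $\Ext^1_Q(f,e)\cong\Ext^1_Q(\tilde e,\tilde f)$: thus $e$ admits a semistable right-orthogonal module if and only if $\tilde e$ admits a semistable left-orthogonal module, which by the first part is equivalent to $\tilde e$, equivalently $e$, being semistable.
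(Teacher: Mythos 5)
The paper offers no proof of this statement---it is imported wholesale from \cite{Schofield} and \cite{SvdB}---so there is no internal argument to compare yours against; I will judge the sketch on its own terms. As a reduction to those references plus King's GIT criterion it is essentially sound. The easy direction is complete: given the two vanishings, $\chi(f,e)=0$ forces $\chi(f,-)=\frac{b'}{a}\theta_e$ with $b'>0$, and left-exactness of $\Hom_Q(f,-)$ plus $\Ext^2_Q=0$ gives $\theta_e(\udim e')\le 0$ on subobjects (note this never uses semistability of $f$). For the converse, your closing argument is the correct one---the $\theta_e$-semistable locus is the union of nonvanishing loci of semi-invariants of weight a positive multiple of $\theta_e$, these are spanned by the determinantal semi-invariants $c^f$ by \cite{SvdB}, and $c^f(e)\neq 0$ means exactly $\Hom_Q(f,e)=0$, hence also $\Ext^1_Q(f,e)=0$ since $\chi(f,e)=0$---whereas the preceding semicontinuity detour only yields the conclusion for \emph{general} $e$ and should be cut. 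Your one acknowledged loose end, that the resulting $f$ can be taken semistable, in fact closes itself: this $f$ admits the semistable module $e$ as a right-orthogonal module, and the easy implication run dually (on quotients $f''$ of $f$, using $\theta_f\propto -\chi(-,e)$ and $\Hom_Q(f'',e)\hookrightarrow\Hom_Q(f,e)=0$) shows $f$ is automatically semistable, with no genericity needed. Two minor caveats: you silently restrict to $\edim Kr_N(\udim e)>0$ (harmless for every use in this paper, but not part of the statement), and you should record why King's $\theta$-semistability for the character $(a,-b)$ of $\GL(B)\times\GL(A)$ agrees with the $\SL(B)\times\SL(A)$ notion on $\P\Hom_\C(B\te V,A)$ used in \S\ref{ssec-Kronecker}.
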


Suppose $f$ and $e$ are Kronecker modules with  $\chi(f,e)=0$.  If $\udim e = (b,a)$ then $$\udim f = k(a,Na-b)$$ for some $k\in \Q_{> 0}$, so that the dimension vector of $f$ is determined up to a scalar multiple by $\udim e$.  We also observe that $\edim Kr_N(\udim f)$ is positive if and only if $\edim Kr_N(\udim e)$ is positive, since
$$\edim Kr_N(\udim f)-1 = k^2 (\edim Kr_N(\udim e)-1).$$

It is shown in Dr\'ezet \cite{Drezet} that the Kronecker spaces $Kr_N(\udim e)$ have Picard group $\Z$.  If $f$ is a left orthogonal module to $e$, then the locus $$D_f = \{e':\Hom_Q(f,e')\neq 0\} \subset Kr_N(\udim e)$$ is an effective generator of $\Pic Kr_N(\udim e)\te \Q$ such that $e$ is not in the base locus of $D_f$.  

\subsection{Kronecker modules coming from complexes}\label{ssec-inducedModule}  Suppose $(E,G,F)$ is a triad of exceptional bundles.  Consider the category of complexes $V$ of the form $$E^{n_1}\to G^{n_2}, \qquad (n_i\in \N)$$ where the homomorphisms are maps of complexes. Corresponding to such a complex, we can form a Kronecker $\Hom(E,G)^*$-module $$\C^{n_1}\te \Hom(E,G)^*\to \C^{n_2}.$$ Since exceptional bundles are simple, homomorphisms of complexes are the same  as homomorphisms of the corresponding Kronecker modules.  In fact, this correspondence is an equivalence of categories, and it preserves the Euler characteristic of pairs.

\subsection{Moduli spaces of sheaves birational to Kronecker spaces}\label{ssec-birational} 
In this subsection we study some simple moduli spaces of sheaves on the plane.  The orthogonal invariants $(\mu^+,\Delta^+)$  of an arbitrary moduli space $M(\xi)$ are always of this form unless they are the invariants of an exceptional bundle, so studying these spaces is particularly important.  These moduli spaces also include the height zero moduli spaces of Picard rank $1$.

\begin{proposition}\label{birationalProp}
Fix an exceptional slope $\alpha.\beta$, and suppose $\xi$ is a stable Chern character with $(\xi,\xi_{-(\alpha.\beta)})=0$ and $\Delta(\xi)> \frac{1}{2}$.  We either have $\mu(\xi) > \alpha.\beta-x_{\alpha.\beta}$ or $\mu(\xi)< \alpha.\beta-3+x_{\alpha.\beta}$.  Let $N =\chi(E_{\beta-3},E_\alpha)$.
 
\begin{enumerate}
\item If $\mu(\xi)>\alpha.\beta-x_{\alpha.\beta}$, the general $V\in M(\xi)$ admits a resolution of the form $$0\to E_{\beta-3}^{n_1}\to E_{\alpha}^{n_2} \to V\to 0$$ unique up to the action of $\GL(n_1)\times \GL(n_2)$, and $M(\xi)$ is birational to $Kr_N(n_1,n_2)$.  

\item If $\mu(\xi)<\alpha.\beta-3+x_{\alpha.\beta}$ and $r(\xi)\geq 2$, the general $V\in M(\xi)$ admits a resolution $$0 \to V \to E_{\beta-3}^{n_1}\to E_{\alpha}^{n_2} \to 0$$ unique up to the action of $\GL(n_1)\times \GL(n_2)$, and $M(\xi)$ is birational to $Kr_N(n_1,n_2).$
\end{enumerate} 
\end{proposition}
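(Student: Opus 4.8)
The plan is to run the generalized Beilinson spectral sequence of the general $V\in M(\xi)$ with respect to the triad $(E_{\beta-3},E_\alpha,E_{\alpha.\beta})$, show that it degenerates to the claimed two-term complex following the template of Proposition \ref{resolutionProp} and Theorem \ref{triangleThm}, and then deduce the birationality formally. Before anything else I would settle the dichotomy on $\mu(\xi)$. Since $(\xi,\xi_{-(\alpha.\beta)})=0$, the point $(\mu(\xi),\Delta(\xi))$ lies on $Q_{\xi_{-(\alpha.\beta)}}$, which is the translate of the reference parabola $Q_{\xi_0}$ by $(\alpha.\beta,-\Delta_{\alpha.\beta})$; an elementary computation shows this parabola meets $\Delta=\frac{1}{2}$ exactly at the left endpoint $(\alpha.\beta-x_{\alpha.\beta},\frac{1}{2})$ of $I_{\alpha.\beta}$ and at its reflection $(\alpha.\beta-3+x_{\alpha.\beta},\frac{1}{2})$ across the vertex line $\mu=\alpha.\beta-\frac{3}{2}$ (this second point is the right endpoint of $I_{\alpha.\beta-3}$). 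Concavity together with $\Delta(\xi)>\frac{1}{2}$ then forces $\mu(\xi)$ outside the closed interval bounded by these two roots, which is exactly the stated alternative.

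For case (1), assume $\mu(\xi)>\alpha.\beta-x_{\alpha.\beta}$. Writing down the spectral sequence of Theorem \ref{beilinsonThm} for the triad $(E_{\beta-3},E_\alpha,E_{\alpha.\beta})$ (the second form in the definition of a triad, with $\alpha,\beta$ as in Equation~(\ref{eqn-ab})), a chain of numerical estimates in the $(\mu,\Delta)$-plane in the spirit of Lemma \ref{lem-estimates}---using $\mu(\xi)>\alpha.\beta-x_{\alpha.\beta}$, $\Delta(\xi)>\frac{1}{2}$, and orthogonality relations such as $\chi(E_{\alpha.\beta},E_\alpha)=0$ among the exceptional bundles in play---shows that for the general $V$ the only surviving entries lie in the $G_{-2}$- and $G_{-1}$-columns at $q=1$, so the spectral sequence degenerates to a complex $E_{\beta-3}^{n_1}\to E_\alpha^{n_2}$ in degrees $-1,0$ quasi-isomorphic to $V$, with $n_1,n_2$ determined by $\xi$. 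To see this actually occurs for the general member of $M(\xi)$, I would reverse the construction as in Proposition \ref{resolutionProp}: start from a general $\psi\in\Hom(E_{\beta-3}^{n_1},E_\alpha^{n_2})$, check $\ch(\coker\psi)=\xi$ at the level of mapping cones, deduce injectivity of $\psi$ with torsion-free cokernel $V$ from global generation of $\sHom(E_{\beta-3},E_\alpha)$ and the Bertini statement \cite[Proposition 2.6]{HuizengaPaper2}, verify the expected vanishings in the spectral sequence for $V$ (including $H^\ast(V\te E_{-(\alpha.\beta)})=0$) from orthogonality of exceptional bundles, and show that the family of such $V$ is a complete family of prioritary sheaves, using $\Ext^{>0}(E_{\beta-3},E_\alpha)=0$ (valid since $\beta-3<\alpha$) for priority and a Kodaira--Spencer computation for completeness. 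Density of the semistable locus in the irreducible prioritary stack \cite{HirschowitzLaszlo} then yields that the general $V\in M(\xi)$ has such a resolution, and uniqueness up to $\GL(n_1)\times\GL(n_2)$ follows from the analog of Lemma \ref{complexMap}.

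By \S\ref{ssec-inducedModule} a complex $E_{\beta-3}^{n_1}\to E_\alpha^{n_2}$ is precisely a Kronecker $\Hom(E_{\beta-3},E_\alpha)^*$-module of dimension vector $(n_1,n_2)$, with $N=\hom(E_{\beta-3},E_\alpha)=\chi(E_{\beta-3},E_\alpha)$, which is at least $3$. Moreover $\edim Kr_N(n_1,n_2)=1-\chi(e,e)=1-\chi(\xi,\xi)=\dim M(\xi)\ge 1$, since the equivalence of \S\ref{ssec-inducedModule} preserves Euler characteristics of pairs; in particular Dr\'ezet's theorem applies and the general module of dimension vector $(n_1,n_2)$ is stable. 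Sending the general $V$ to the isomorphism class of the module attached to its resolution therefore defines a rational map $M(\xi)\dashrightarrow Kr_N(n_1,n_2)$, well defined by the uniqueness above and landing in the semistable locus because a general $V$ corresponds to a general $\psi$, hence to a general module. The inverse rational map sends a general semistable module to the cokernel of the corresponding complex, which by the previous paragraph is a general sheaf in $M(\xi)$; the two maps are mutually inverse, so $M(\xi)$ is birational to $Kr_N(n_1,n_2)$. Case (2), where $\mu(\xi)<\alpha.\beta-3+x_{\alpha.\beta}$ and $r(\xi)\ge 2$, is proved by the parallel argument: the surviving entries of the same spectral sequence now sit in cohomological degrees $0$ and $1$, so the two-term complex has $V$ as its kernel, giving $0\to V\to E_{\beta-3}^{n_1}\to E_\alpha^{n_2}\to 0$ (alternatively one dualizes and invokes case (1) for $\xi^\ast$). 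The hypothesis $r(\xi)\ge 2$ enters exactly as in Step 4 of Proposition \ref{resolutionProp}: it guarantees a general map $E_{\beta-3}^{n_1}\to E_\alpha^{n_2}$ is surjective on every fiber, so its kernel is locally free---which is forced here, and fails for $r(\xi)=1$ since a rank-one sheaf of discriminant $>\frac{1}{2}$ is never locally free. Uniqueness, completeness of the prioritary family, and the identification with $Kr_N(n_1,n_2)$ then go through verbatim.

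I expect the main obstacle to be the spectral-sequence degeneration for the general $V$: the combination of the numerical estimates with the Bertini/prioritary argument that upgrades formal convergence of the spectral sequence into a genuine resolution of the general member of $M(\xi)$, together with the compatibility between GIT-semistability of Kronecker modules and semistability of the corresponding cokernel sheaves needed for the birational maps to be defined on honest dense open sets. Both are close reprises of \S\ref{sec-beilinson} and of the arguments in \cite{HuizengaPaper2}, so the difficulty is organizational rather than conceptual; the one genuinely new ingredient is the elementary parabola calculation underlying the dichotomy.
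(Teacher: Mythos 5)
Your proposal is correct and follows essentially the same route as the paper: the parabola dichotomy, the Beilinson spectral sequence for the triad $(E_{\beta-3},E_\alpha,E_{\alpha.\beta})$ degenerating to the two-term resolution, uniqueness via Lemma \ref{complexMap}, the Euler-characteristic-preserving passage to Kronecker modules with the dimension count $\edim Kr_N(n_1,n_2)=\dim M(\xi)>0$ forcing stability of the general module, and duality for case (2). The only difference is organizational: the paper shortcuts your re-run of the spectral-sequence analysis by observing that the corresponding exceptional slope to $\xi$ is $-(\alpha.\beta)=(-\beta).(-\alpha)$, so the degenerate case of \S\ref{ssec-equality} applies verbatim.
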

\begin{proof}
The invariants of $\xi$ lie on the parabola $Q_{\xi_{-(\alpha.\beta)}}$, which intersects the line $\Delta=\frac{1}{2}$ at the points $\alpha.\beta-3+x_{\alpha.\beta}$ and $\alpha.\beta-x_{\alpha.\beta}$.  Since $\Delta(\xi) >\frac{1}{2}$, it follows that either $\mu(\xi) > \alpha.\beta-x_{\alpha.\beta}$ or $\mu(\xi)< \alpha.\beta-3+x_{\alpha.\beta}$.

(1) For $\mu(\xi) > \alpha.\beta - x_{\alpha.\beta}$, it is easy to show that the associated exceptional slope to $\xi$ is $-(\alpha.\beta)=(-\beta).(-\alpha)$.   The results of \S \ref{ssec-equality} then give that there is a resolution of the general $V\in M(\xi)$ of the form $$0\to E_{\beta-3}^{n_1}\to E_{\alpha}^{n_2}\to V\to 0,$$ where $n_1 = -\chi(E_\beta,V)$ and $n_2=\chi(E_\alpha,V)$.  We also know that the map in a general complex $$E_{\beta-3}^{n_1}\to E_{\alpha}^{n_2}$$ is injective with semistable cokernel of Chern character $\xi$ (Proposition \ref{resolutionProp}), and that if two general semistable bundles with Chern character $\xi$ are isomorphic then the isomorphism is induced by an isomorphism of their resolutions (Lemma \ref{complexMap}).  

We now associate to a two-term complex as above a Kronecker $\Hom(E_{\beta-3},E_{\alpha})^*$-module $e$ with dimension vector $(n_1,n_2)$ as in Section \ref{ssec-inducedModule}.  Then $$\dim(M(\xi)) = 1-\chi(V,V) = 1-\chi(e,e)=\edim Kr_N(n_1,n_2),$$ so since $\dim(M(\xi))> 0$ we conclude $Kr_N(n_1,n_2)$ has positive expected dimension.  Thus the general such module is stable.  Furthermore, isomorphism of Kronecker modules corresponds to isomorphism of complexes, so we obtain a birational map $M(\xi)\dashrightarrow Kr_N(n_1,n_2)$.

(2) Since the rank is at least 2, the general $V\in M(\xi)$ is locally free and taking duals gives a birational map $M(\xi) \dashrightarrow M(\xi^*).$  This case then follows by applying case (1) to the Chern character $\xi^\ast $ satisfying $(\xi^\ast,\xi_{\alpha.\beta-3})=0.$
\end{proof}

\subsection{The Kronecker fibration}  Let $\xi$ be a stable Chern character corresponding to a moduli space of semistable sheaves with Picard rank $2$.  Let $\alpha.\beta$ be the corresponding exceptional slope, and let  $\xi^+$ be the corresponding orthogonal character. 

Let $U$ be a general sheaf in $M(\xi)$.  We saw in \S\ref{sec-beilinson} that $U$  determines a complex $W$  of the form $$E_{-\alpha-3}^{m_1}\to E_{-\beta}^{m_2},$$ and that the isomorphism class of this complex depends only on $U$.  Conversely, for a general such complex $W$, there exist stable bundles $U$ with associated complex $W$ by Proposition \ref{resolutionProp}.  The complex $W$ then determines a $\Hom(E_{-\alpha-3},E_{-\beta})^*$-module with dimension vector $(m_1,m_2)$.  Putting $N = \hom (E_{-\alpha-3},E_{-\beta})$, we therefore should have a dominant rational map $$M(\xi) \dashrightarrow Kr_N(m_1,m_2).$$ The only thing we haven't verified yet is that the Kronecker module associated to a general $W$ is semistable.

\begin{proposition}\label{fibrationProp}
With notation as above, the Kronecker moduli space $Kr_N(m_1,m_2)$ is positive-dimensional. Thus there is a dominant rational map $$\pi:M(\xi)\dashrightarrow Kr_N(m_1,m_2).$$
\begin{enumerate}
\item If $(\xi,\xi_{\alpha.\beta})\neq 0$, then the general fiber of $\pi$ is positive-dimensional.
\item If $(\xi,\xi_{\alpha.\beta})=0$, then $\pi$ is birational.
\end{enumerate}
\end{proposition}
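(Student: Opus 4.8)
The plan is to turn the statement into a dimension count. For a stable sheaf $U$ one has $\ext^2(U,U)=0$ by stability, so $\dim M(\xi)=\ext^1(U,U)=1-\chi(U,U)$ by Riemann--Roch and Theorem~\ref{stableThm}; on the Kronecker side $\edim Kr_N(m_1,m_2)=1-\chi(e,e)$ (\S\ref{ssec-Kronecker}), and under the equivalence of \S\ref{ssec-inducedModule} the module $e$ attached to the two--term complex $W$ of Theorem~\ref{triangleThm} satisfies $\chi(e,e)=\chi(W,W)$ for the Euler pairing on $D^b(\PP^2)$. The map $\pi$ sends $U$ to $e$. By Proposition~\ref{resolutionProp} the general $U\in M(\xi)$ is the cokernel of a general map $E_{-\alpha-3}^{m_1}\to E_{-\beta}^{m_2}\oplus E_{-(\alpha.\beta)}^{m_3}$, whose ``$E_{-\beta}$--part'' is a general object $E_{-\alpha-3}^{m_1}\to E_{-\beta}^{m_2}$, i.e. $W$; so once $Kr_N(m_1,m_2)$ is known to have positive expected dimension, Dr\'ezet's theorem (\S\ref{ssec-Kronecker}) makes the associated module $e$ stable for the general $U$, $\pi$ is defined on a dense open set, and it is dominant because the Kronecker modules coming from general complexes are dense.

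I would then settle the two claims separately. When $(\xi,\xi_{\alpha.\beta})=0$, the spectral sequence of \S\ref{ssec-equality} gives a resolution $0\to E_{-\alpha-3}^{m_1}\to E_{-\beta}^{m_2}\to U\to 0$, so $W\cong U$; hence $e$ determines $U$, the general complex has stable cokernel (Proposition~\ref{resolutionProp}), the general stable $U$ has such a resolution (\S\ref{ssec-equality}), and Lemma~\ref{complexMap} identifies isomorphisms of complexes with isomorphisms of Kronecker modules. Thus $\pi$ is birational (as in Proposition~\ref{birationalProp}(1)), and in particular $Kr_N(m_1,m_2)$ is birational to $M(\xi)$ and so positive--dimensional. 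When $(\xi,\xi_{\alpha.\beta})\neq 0$ I would instead identify $Kr_N(m_1,m_2)$, up to birational equivalence and a Kronecker reflection functor matching the dimension vectors, with $M(\xi^+)$: by Lemma~\ref{stupidInequalityLemma}, $\mu^+>\alpha.\beta-x_{\alpha.\beta}$, so $\xi^+$ falls under Proposition~\ref{birationalProp}(1), which presents $M(\xi^+)$ as a moduli space of Kronecker modules with the same number of arrows ($\chi(E_{\beta-3},E_\alpha)=\hom(E_{-\alpha-3},E_{-\beta})=N$) and dimension vector a reflection of $(m_1,m_2)$. Since $\Delta^+>\tfrac12$, $\dim M(\xi^+)=1+r(\xi^+)^2(2\Delta^+-1)>0$, so $Kr_N(m_1,m_2)$ is positive--dimensional, which also completes the verification that $\pi$ is a dominant rational map.

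It remains to compute the dimension of the general fibre of $\pi$ when $(\xi,\xi_{\alpha.\beta})\neq 0$; by the above it equals $\chi(W,W)-\chi(U,U)$. Here I would use the distinguished triangle of Theorem~\ref{triangleThm} (and the analogous one in the $(\xi,\xi_{\alpha.\beta})<0$ subsection), which gives $[W]=[U]-m_3[E_{-(\alpha.\beta)}]$ in $K(\PP^2)$, where $m_3=\hom(E_{-(\alpha.\beta)},U)>0$. Expanding bilinearly and using $\chi(E_{-(\alpha.\beta)},E_{-(\alpha.\beta)})=1$ together with $\chi(E_{-(\alpha.\beta)},U)=m_3$ (the higher $\Ext$'s vanishing for the general $U$, by the spectral sequence), the cross terms telescope and
$$\dim M(\xi)-\dim Kr_N(m_1,m_2)=\chi(W,W)-\chi(U,U)=-\,m_3\,\chi(U,E_{-(\alpha.\beta)}).$$
So the whole statement reduces, in this case, to the inequality $\chi(U,E_{-(\alpha.\beta)})<0$. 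By Serre duality $\chi(U,E_{-(\alpha.\beta)})=\chi(U\te E_{\alpha.\beta-3})$, and by Riemann--Roch this is negative precisely when the invariants $(\alpha.\beta-3,\Delta_{\alpha.\beta})$ of $E_{\alpha.\beta-3}$ lie strictly above the parabola $Q_\xi$.

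I expect this last inequality to be the main obstacle. It should be proved by the same kind of explicit estimate used for Lemma~\ref{lem-estimates}: one first checks that $\alpha.\beta-3$ lies strictly between the two slopes where $Q_\xi$ meets $\Delta=\tfrac12$ — the vertex of $Q_\xi$ lies more than $\tfrac32$ to the left of the larger such slope $\mu_0$ by Lemma~\ref{lem-estimates}(1), and the stability bound $\mu(\xi)>-(\alpha.\beta)$ controls the smaller one — so that $Q_\xi(\alpha.\beta-3)<\tfrac12$; and then one upgrades this to $Q_\xi(\alpha.\beta-3)<\Delta_{\alpha.\beta}$ by comparing $Q_\xi$ with the translated parabola $Q_{\xi_{-(\alpha.\beta)}}$ that passes through $(\mu^+,\Delta^+)$ and using $\Delta(\xi)>\tfrac12$. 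Once this is in hand, the $(\xi,\xi_{\alpha.\beta})<0$ case runs identically, with the triangle $W\to U\to E_{-(\alpha.\beta)-3}[1]\te\Hom(U,E_{-(\alpha.\beta)-3}[1])^*\to\cdot$ of the corresponding subsection in place of Theorem~\ref{triangleThm}, and the proof is complete.
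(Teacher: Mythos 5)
Your reduction is sound and, up to the last step, follows the paper's architecture: the fiber dimension of $\pi$ is $\chi(W,W)-\chi(U,U)$, the triangle $E_{-(\alpha.\beta)}^{m_3}\to U\to W\to\cdot$ together with $\chi(E_{-(\alpha.\beta)},U)=m_3$ and $\chi(E_{-(\alpha.\beta)},E_{-(\alpha.\beta)})=1$ gives $\chi(W,W)-\chi(U,U)=-m_3\chi(U,E_{-(\alpha.\beta)})$, and the positive-dimensionality of $Kr_N(m_1,m_2)$ via the orthogonal dimension vector attached to $M(\xi^+)$ is exactly the paper's argument (your "reflection functor" phrasing should be replaced by the numerical relation $\edim Kr_N(\udim f)-1=k^2(\edim Kr_N(\udim e)-1)$ from \S\ref{ssec-Kronecker}, since the orthogonal vector is only determined up to a scalar $k$ and the spaces need not be isomorphic). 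So everything hinges, as you say, on $\chi(U,E_{-(\alpha.\beta)})<0$.

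The gap is in your proof of that inequality. Your "upgrade" from $Q_\xi(\alpha.\beta-3)<\tfrac12$ to $Q_\xi(\alpha.\beta-3)<\Delta_{\alpha.\beta}$ via the comparison $Q_{\xi_{-(\alpha.\beta)}}(\mu)-Q_\xi(\mu)=t(\mu^+-\mu)$, where $t=\mu(\xi)+\alpha.\beta$, requires $t(\mu^+-\alpha.\beta+3)>1-2\Delta_{\alpha.\beta}=x_{\alpha.\beta}(3-x_{\alpha.\beta})$; with only the inputs you allow yourself ($\Delta(\xi)>\tfrac12$, the stability bound $t>0$, and $\mu^+>\alpha.\beta-x_{\alpha.\beta}$ from Lemma \ref{stupidInequalityLemma}) this forces you to need roughly $t\geq x_{\alpha.\beta}$, which can fail. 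Concretely, take $\xi=(3,\tfrac13,\tfrac{11}{9})$: here $\alpha.\beta=0$, $(\xi,\xi_0)=1>0$, but $t=\tfrac13<x_0=\tfrac{3-\sqrt5}{2}$, and $t(3-x_0)\approx0.87<1=x_0(3-x_0)$, so your estimate does not close (the conclusion is still true there, since $\mu^+=2$ and $\chi(U,\OO)=-2$). In the regime $0<t<x_{\alpha.\beta}$ one has $\mu(\xi)\in I_{-(\alpha.\beta)}$ and the needed inequality $P(-t)<\Delta(\xi)+\Delta_{\alpha.\beta}$ is exactly the Picard-rank-$2$ hypothesis $\Delta(\xi)>\delta(\mu(\xi))=P(-|t|)-\Delta_{\alpha.\beta}$ — so the estimate can be rescued by splitting into the cases $t\geq x_{\alpha.\beta}$ and $t<x_{\alpha.\beta}$ and invoking the full $\delta$-curve in the second case, but not by the parabola comparison you describe. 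The paper sidesteps all of this with a soft argument: the weak inequality $(\xi',\zeta)\geq0$ holds on the whole open set $\Omega$ of characters with corresponding exceptional slope $\alpha.\beta$ and positive pairing (because $\pi$ is dominant for each of them), so $\Omega$ cannot meet the rational hyperplane $\zeta^\perp$, forcing the strict inequality. You should either adopt that argument or supply the missing case analysis.
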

\begin{proof}
By Proposition \ref{stableCharacterProp}, the Chern character $\xi^+$ is semistable. Let $V$ be a general semistable vector bundle with character $\xi^+$. To prove $Kr_N(m_1,m_2)$ is positive-dimensional, first suppose $(\xi,\xi_{\alpha.\beta})>0$.  By definition, $\{(\mu^+,\Delta^+)\}= Q_{\xi_{-(\alpha.\beta)}}\cap Q_\xi$, and $\mu^+>\alpha.\beta-x_{\alpha.\beta}$ by Lemma \ref{stupidInequalityLemma}.  According to Proposition \ref{birationalProp}, the bundle $V$ has a resolution  $$0\to E_{\beta-3}^{n_1}\to E_{\alpha}^{n_2}\to V\to 0.$$ By contrast, if $U\in M(\xi)$ is general, we have a distinguished triangle $$E_{-(\alpha.\beta)}^{m_3}\to U\to W\to \cdot,$$ where $W$ is a complex $$W: E_{-\alpha-3}^{m_1}\to E_{-\beta}^{m_2}.$$ As $(\mu^+,\Delta^+)\in Q_\xi$, we have $\chi(V^*,U)=0$.  Orthogonality properties of exceptional bundles show $\chi(V^*,E_{-(\alpha.\beta)})=0$ as well, and therefore $\chi(V^*,W)=0$.  By Serre duality, this is equivalent to $\chi(W,V^*(-3)) = 0$, and we further obtain $\chi(W,V^*(-3)[1])=0$. As $V^*(-3)[1]$ is isomorphic in the derived category to a complex $$V^*(-3)[1]:E_{-\alpha-3}^{n_1}\to E_{-\beta}^{n_2}$$ sitting in degrees $-1$ and $0$, both $W$ and $V^*(-3)[1]$ correspond to Kronecker $\Hom(E_{-\alpha-3},E_{-\beta})^*$-modules, say $f$ and $e$, respectively.
We conclude that $\chi(f,e)=0$, so $\udim f$ is a left-orthogonal dimension vector to $\udim e$.  The space $M(\ch V)$ is positive dimensional, so Proposition \ref{birationalProp} shows that $Kr_N(n_1,n_2)$ is positive-dimensional.  By the discussion at the end of \S\ref{ssec-Kronecker}, $Kr_N(m_1,m_2)$ is positive-dimensional as well.

In case $(\xi,\xi_{\alpha.\beta})<0$, a similar argument works.  When $(\xi,\xi_{\alpha.\beta})=0$, the positive-dimensionality and part (2) were proved in Proposition \ref{birationalProp}.

Let us show that if $(\xi,\xi_{\alpha.\beta})\neq 0$ then the general fiber of $\pi$ is positive-dimensional.  We focus on the case $(\xi,\xi_{\alpha.\beta})>0$, with the other case being similar.  Since $\pi$ is dominant, we know $\dim M(\xi) \geq \dim Kr_N(n_1,n_2)$, and we must show the inequality is strict.  Computing dimensions, this inequality is equivalent to$$\chi(U,U)\leq \chi(W,W).$$ Expanding each side by additivity of the Euler characteristic, we have the equivalent inequality $$m_3( \xi,\xi_{\alpha.\beta})+\chi(W,U)\leq \chi(W,U)-m_3\chi(W,E_{-(\alpha.\beta)} ).$$ Since $m_3=(\xi,\xi_{\alpha.\beta})>0$, this is the same as $$(\xi,\xi_{\alpha.\beta})\leq - \chi(W,E_{-(\alpha.\beta)})=m_1 \chi(E_{-\alpha-3},E_{-(\alpha.\beta)})-m_2\chi(E_{-\beta},E_{-(\alpha.\beta)}).$$ Recalling \begin{align*} m_1 &= -(\xi,\xi_\alpha)\\ m_2 &= - (\xi,\xi_{\alpha.(\alpha.\beta)}),\end{align*} we see that there are nonzero constants $C_1,C_2$ (independent of $\xi$) such that the inequality is equivalent to $$(\xi,\xi_{\alpha.\beta}) \leq (\xi,C_1 \xi_{\alpha}+C_2 \xi_{\alpha.(\alpha.\beta)}).$$ Noting that $\{\xi_\alpha,\xi_{\alpha.(\alpha.\beta)},\xi_{\alpha.\beta}\}$ form a basis for $K(\P^2)\te \R$, we finally conclude there is some nonzero $\zeta \in K(\P^2)\te \R$ such that the inequality $\dim M(\xi)\geq \dim Kr_N(n_1,n_2)$ is equivalent to $$(\xi,\zeta)\geq 0.$$ Here $\zeta$ depends only on the corresponding exceptional slope $\alpha.\beta$, and not on the particular Chern character $\xi$.  We must verify the inequality is strict. 

Consider the set $$\Omega = \left\{\xi': \begin{array}{c}\Delta(\xi') > \delta(\mu(\xi')),\\ \textrm{$\alpha.\beta$ is the corresponding exceptional slope to $\xi'$,} \\ \textrm{and $(\xi',\xi_{\alpha.\beta})>0$}\end{array}\right\} \subset (K(\P^2)\te \Q )\sm \{0\}.$$ Clearly $\Omega$ is an open set.  If it intersects the plane $\zeta^\perp$, then $\Omega$ contains some rational Chern character $\xi'$ such that $(\xi',\zeta)<0$, a contradiction.  Thus $\Omega$ and $\zeta^\perp$ must be disjoint, and we conclude $(\xi,\zeta)>0$.  This implies $\dim M(\xi) > \dim Kr_N(n_1,n_2)$.
\end{proof}

\section{The effective cone of the moduli space}\label{sec-effective}

In this section we prove our main theorem on the effective cone of divisors on $M(\xi)$ by making use of the Kronecker fibration.

\subsection{The primary edge of the effective cone}\label{ss-primaryEffective}  Let $\xi$ be a stable Chern character of positive rank such that $M(\xi)$ has Picard rank $2$.  Let $\alpha.\beta$ be the associated exceptional slope.  We first consider the non-exceptional case.

\begin{theorem}\label{effThm}
Suppose $(\xi,\xi_{\alpha.\beta})\neq 0$.  Let $\xi^+$ be a primary corresponding orthogonal character to $\xi$.  If $V\in M(\xi^+)$ is general, then the Brill-Noether locus $$D_V = \{U\in M(\xi):h^0(U\te V)\neq 0\}$$ is a divisor spanning the primary edge of the effective  cone of $M(\xi)$.  Under the isomorphism $\NS(M(\xi))\cong \xi^\perp$, it corresponds to $\xi^+$.
\end{theorem}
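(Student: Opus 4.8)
The plan is to realize $D_V$ as the pullback, under the Kronecker fibration $\pi\colon M(\xi)\dashrightarrow Kr_N(m_1,m_2)$ of Proposition~\ref{fibrationProp}, of the ample generator of $\Pic(Kr_N(m_1,m_2))_{\Q}$, and then to exploit that $\pi$ is dominant with positive-dimensional general fibre — this last point being exactly where the hypothesis $(\xi,\xi_{\alpha.\beta})\neq 0$ enters, via Proposition~\ref{fibrationProp}(1). \emph{Step 1: $D_V$ is an effective divisor.} Since $\xi^+\in\xi^\perp$ we have $\chi(U\te V)=0$ for every $U\in M(\xi)$. Because $\mu(\xi)+\mu^+\geq 0$, the slopes satisfy $\mu(U)>\mu(V^*(-3))$, so stability forces $H^2(U\te V)=\Hom(U,V^*(-3))^*=0$ for every stable $U$ and general $V$; thus the content of Step 1 is the generic vanishing of $H^0(U\te V)=\Hom(V^*,U)$. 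For this I combine the resolution of the general $U\in M(\xi)$ from \S\ref{sec-beilinson} --- the triangle $E_{-(\alpha.\beta)}^{m_3}\to U\to W$ with $W$ the two-term complex $E_{-\alpha-3}^{m_1}\to E_{-\beta}^{m_2}$ --- with the resolution (or co-resolution) of the general $V\in M(\xi^+)$ furnished by Proposition~\ref{birationalProp}, which applies since $\mu^+>\alpha.\beta-x_{\alpha.\beta}$ by Lemma~\ref{stupidInequalityLemma} (and its analogue when $(\xi,\xi_{\alpha.\beta})<0$). Dualizing the resolution of $V$ presents $V^*(-3)[1]$ as a two-term complex of the same shape as $W$; feeding the triangle into $\Hom(V^*,-)$ and killing the terms $\Hom(V^*,E_{-(\alpha.\beta)})$ and $\Ext^1(V^*,E_{-(\alpha.\beta)})$ by orthogonality properties of exceptional bundles, one identifies $\Hom(V^*,U)$ with a space of homomorphisms of Kronecker modules. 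As the modules associated to $W$ and to $V^*(-3)[1]$ have $\chi$-orthogonal dimension vectors and both Kronecker moduli spaces are nonempty of the expected dimension, Theorem~\ref{orthogThm} together with semicontinuity gives $\Hom(V^*,U)=0$ for the general $U$; by Proposition~\ref{BNProp}(1), $D_V$ is then an effective divisor.

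\emph{Step 2: identification of the class, and of $D_V$ with a pullback.} Since $\mu(U\te V)=\mu(\xi)+\mu^+\geq 0>-3$, Proposition~\ref{BNProp}(2) gives $\OO_{M(\xi)}(D_V)\cong\lambda_M(\xi^+)$, i.e.\ $[D_V]$ corresponds to $\xi^+$ under $\NS(M(\xi))\cong\xi^\perp$, and Corollary~\ref{primaryCor}(1) places $[D_V]$ in the primary half of $\NS(M(\xi))$. I then upgrade the computation of Step 1 to families: for a complete family $\F U/S$ of general sheaves, after shrinking $S$ so that the integers $m_i$ are constant, the Beilinson triangle relativizes, and the Fitting-ideal description of $D_V$ from Proposition~\ref{BNProp} matches, via the identification $\Hom(V^*,U)\cong \Hom_Q(\text{module of }W,\,e)$, the pullback along $\pi$ of a Brill--Noether divisor $D_e$ on $Kr_N(m_1,m_2)$ cut out by the Kronecker module $e$ of $V^*(-3)[1]$, which is orthogonal to the module of the general $W$. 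Thus $D_V=\pi^*D_e$ as $\Q$-divisor classes, with $D_e$ the ample generator of $\Pic(Kr_N(m_1,m_2))_{\Q}$.

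\emph{Step 3: extremality.} Let $Z\subset M(\xi)$ be the indeterminacy locus of $\pi$, of codimension $\geq 2$; on $M(\xi)\sm Z$ we have $D_V=(\pi|_{M(\xi)\sm Z})^*D_e$ with $\pi|_{M(\xi)\sm Z}$ dominant onto $Kr_N(m_1,m_2)$. Restriction gives an injection $H^0(M(\xi),mD_V)\hookrightarrow H^0(M(\xi)\sm Z,\,m(\pi|)^*D_e)$, and the right-hand space is bounded by a fixed multiple of $h^0(Kr_N(m_1,m_2),mD_e)\sim m^{\dim Kr_N(m_1,m_2)}$. Since the general fibre of $\pi$ is positive-dimensional, $\dim Kr_N(m_1,m_2)<\dim M(\xi)$, so $h^0(M(\xi),mD_V)$ grows strictly more slowly than $m^{\dim M(\xi)}$ and $D_V$ is not big. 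As $M(\xi)$ is a Mori dream space, $\Eff(M(\xi))$ is a closed full-dimensional cone in the plane $\NS(M(\xi))$, and a non-big effective class lies on its boundary; hence $[D_V]$ spans one of its two extremal rays. By Corollary~\ref{primaryCor}(1) that ray lies in the primary half of $\NS(M(\xi))$, so it is the primary edge, and by Step 2 it corresponds to $\xi^+$.

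The principal obstacle is Step 2: promoting the pointwise isomorphism $\Hom(V^*,U)\cong\Hom_Q(\text{module of }W,\,e)$ to the divisorial identity $D_V=\pi^*D_e$ in families. This requires relativizing the Beilinson spectral sequence over the base $S$, checking that the vanishings supplied by orthogonality of exceptional bundles hold uniformly, matching the two determinantal (Fitting-ideal) structures, and dealing with the rational-map subtleties around the indeterminacy locus $Z$. Once this is in place, Steps 1 and 3 are essentially formal.
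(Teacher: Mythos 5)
Your Steps 1 and 2 reproduce the paper's argument essentially verbatim: cohomological orthogonality of the general pair $(U,V)$ is obtained, exactly as in the paper, by feeding the Beilinson triangle for $U$ and the resolution of $V$ from Proposition \ref{birationalProp} into the Kronecker-module dictionary and invoking Theorem \ref{orthogThm}, and the identification of the class via Proposition \ref{BNProp} and Corollary \ref{primaryCor} together with the description of $D_V$ as a pullback along $\pi$ is the same.

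The divergence is in Step 3, and the one step there that does not hold up is the bound $h^0(M(\xi)\sm Z,\, m\pi^\ast D_e)\le C\cdot h^0(Kr_N(m_1,m_2),\, mD_e)$. For a dominant morphism that is not proper, sections of the pullback of a line bundle are not controlled by sections downstairs: a section restricted to a fiber is a section of the trivial bundle on a quasi-projective, non-proper fiber, hence need not be constant and need not descend (compare $\mathbb{A}^2\sm\{0\}\to \P^1$). The codimension-two statement about $Z$ gives $H^0(M(\xi),mD_V)=H^0(M(\xi)\sm Z, m\pi^\ast D_e)$, but it does not make the fibers of $\pi|_{M(\xi)\sm Z}$ proper, so the inequality you need is unjustified as stated. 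The conclusion (non-bigness, hence extremality) is correct, and the standard repair --- the one the paper makes explicit in the parallel exceptional case, Theorem \ref{excEffThm} --- is a dual moving curve: inside $S=\P\Hom(E_{-\alpha-3}^{m_1},E_{-\beta}^{m_2}\oplus E_{-(\alpha.\beta)}^{m_3})$ the locus of non-stable cokernels has codimension at least $2$, so through a general point of $M(\xi)$ there is a complete curve $C$ lying in a fiber of $\pi$; then $D_V\cdot C=0$ while curves of class $C$ cover a dense subset of $M(\xi)$, and if $D_V$ were big then Kodaira's lemma would give $mD_V=A+E$ with $A$ ample, $E$ effective, and $0=mD_V\cdot C\ge A\cdot C>0$ for $C$ chosen to avoid $E$. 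With that substitution for your section count, the argument coincides with the paper's.
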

\begin{proof}
We consider the case $(\xi,\xi_{\alpha.\beta})>0$, with the other case being similar.  A general $U\in M(\xi)$ fits into a triangle $$E_{-(\alpha.\beta)}^{m_3}\to U\to W \to \cdot,$$ while $V$ has a resolution $$0\to E_{\beta-3}^{n_1}\to E_{\alpha}^{n_2} \to V\to 0.$$ We wish to show $V\te U$ has no cohomology.  We find $\sHom(E_{\alpha.\beta},V)$ has no cohomology.  As in the first part of the proof of Proposition \ref{fibrationProp}, we reduce to showing $\sHom(W,V^*(-3)[1])$ has no cohomology.  But if $f$, $e$ are the Kronecker modules corresponding to $W$ and $V^*(-3)[1],$ respectively, this is equivalent to the vanishing of $\Hom(f,e)$, which is true by Theorem \ref{orthogThm}.  Then $D_V$ is just the pullback of the ample divisor $D_f \subset Kr(\udim e)$ under the map $\pi: M(\xi)\dashrightarrow Kr(\udim e)$.  It follows that $D_V$ spans an edge of the effective cone.  The computation of the class of $D_V$ and the fact that it spans the primary edge of the effective cone follows from Proposition \ref{BNProp} and Corollary \ref{primaryCor}.
\end{proof}

\begin{corollary}\label{movingCor}
If $(\xi,\xi_{\alpha.\beta})\neq 0$, then the primary edges of the effective and movable cones of divisors of $M(\xi)$ coincide.
\end{corollary}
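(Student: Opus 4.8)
The plan is to show that the edge-spanning Brill-Noether divisor $D_V$ produced in Theorem \ref{effThm} is not merely effective but movable, i.e. its stable base locus contains no divisorial component. The natural strategy is to exploit the dominant rational map $\pi \colon M(\xi) \dashrightarrow Kr_N(m_1,m_2)$ of Proposition \ref{fibrationProp} together with the identification, made explicit in the proof of Theorem \ref{effThm}, of $D_V$ as the pullback $\pi^*(D_f)$ of the ample generator $D_f$ of $\Pic(Kr_N(\udim e))\te \Q$. Since $D_f$ is ample on the target, it is movable there; the issue is that $\pi$ is only a rational map, so one must control the indeterminacy locus and the fibers.

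First I would recall that $M^s(\xi)$ is smooth and that the locus where $\pi$ is not defined has codimension at least $2$ in $M(\xi)$: the general sheaf $U$ admits the resolution of Proposition \ref{resolutionProp}, and the codimension of the failure locus can be estimated exactly as in the boundedness/Bertini arguments of \S\ref{sec-beilinson}. Hence $\pi$ is a morphism in codimension $1$, and the pullback $\pi^*(D_f)$ is well defined as a divisor class; by Theorem \ref{effThm} this class is $\xi^+$, which spans the primary edge of $\Eff(M(\xi))$. Next I would argue that the complete linear system $|n D_f|$ on $Kr_N(\udim e)$, for $n$ divisible enough, is base-point free (true because $\Pic$ has rank $1$ and $D_f$ is ample on a normal projective variety with finitely generated section ring — indeed $Kr_N(\udim e)$ is a GIT quotient, hence a Mori dream space), and then pull back sections. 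A general member of $|\pi^*(nD_f)|$ is the closure of the preimage of a general member of $|nD_f|$, which avoids the generic point of every divisor in $M(\xi)$ not contracted by $\pi$; and no divisor is contracted since $\pi$ has connected positive-dimensional generic fibers over a normal base (part (1) of Proposition \ref{fibrationProp}). Therefore the stable base locus of $\pi^*(D_f)$ is contained in the indeterminacy locus of $\pi$, which has codimension $\geq 2$, so $\pi^*(D_f)$ is movable.

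Finally I would combine this with Theorem \ref{effThm}: the class $\xi^+$ lies on the boundary of $\Eff(M(\xi))$, hence a fortiori on the boundary of the smaller cone $\Mov(M(\xi)) \subseteq \Eff(M(\xi))$; and we have just shown $\xi^+ \in \Mov(M(\xi))$. Since $\Mov(M(\xi))$ is a two-dimensional closed cone (the moduli space is a Mori dream space, so $\Mov$ is closed), a boundary point that is genuinely movable spans an extremal ray of it. Because $\xi^+$ spans the primary edge of $\Eff(M(\xi))$ and lies in the primary half of $\NS(M(\xi))$ (Corollary \ref{primaryCor}), the ray it spans is the primary edge of $\Mov(M(\xi))$ as well, proving the two primary edges coincide.

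The main obstacle I anticipate is the bookkeeping around the rational map $\pi$: one must verify carefully that the indeterminacy locus really has codimension $\geq 2$ (using the stack-level Bertini argument from the proof of Proposition \ref{resolutionProp}), that $\pi$ contracts no divisor (connectedness and positive-dimensionality of the generic fiber is the key input, from Proposition \ref{fibrationProp}(1)), and that pulling back base-point-free systems under a codimension-$1$ morphism behaves as expected — none of these is deep, but each needs to be stated cleanly to conclude that the stable base locus of $\pi^*(D_f)$ is exactly the (small-codimension) indeterminacy locus.
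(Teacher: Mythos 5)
Your proposal is correct and takes essentially the same approach as the paper: $D_V$ is the pullback under the Kronecker fibration $\pi$ of an ample divisor on $Kr_N(\udim e)$, so its stable base locus is contained in the indeterminacy locus of $\pi$, which has codimension at least $2$ because $M(\xi)$ is normal; extremality in $\Mov$ then follows from extremality in $\Eff$ (Theorem \ref{effThm}). The extra verifications you anticipate (connectedness of the generic fiber, that $\pi$ contracts no divisor) are not actually needed, since pulling back a base-point-free system already confines the base locus to the indeterminacy locus regardless of how $\pi$ behaves on divisors.
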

\begin{proof}
Since $D_V$ is the pullback of an ample divisor from the space of Kronecker modules, its stable base locus is contained in the indeterminacy locus of $\pi$.  As $M(\xi)$ is normal, this locus has codimension at least $2$.  Thus $D_V$ spans a movable class.
\end{proof}

The exceptional case requires a slightly different argument.

\begin{theorem}\label{excEffThm}
Suppose $(\xi,\xi_{\alpha.\beta})=0$.
The Brill-Noether divisor $D_{E_{\alpha.\beta}}$ spans the primary edge of the effective cone of $M(\xi)$.  Furthermore, $D_{E_{\alpha.\beta}}$ is reduced and irreducible.  Under the isomorphism $\NS(M(\xi))\cong \xi^\perp$, the divisor corresponds to the exceptional Chern character $\xi_{\alpha.\beta}$.
\end{theorem}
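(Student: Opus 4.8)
The plan is to realize $D_{E_{\alpha.\beta}}$ as the exceptional divisor of the birational map $\pi:M(\xi)\dashrightarrow Kr_N(m_1,m_2)$ of Proposition~\ref{fibrationProp}(2), and to deduce extremality from the rigidity such a divisor must have.

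I would begin with the class. Since $(\xi,\xi_{\alpha.\beta})=0$ we have $\chi(U\te E_{\alpha.\beta})=0$, and applying $\Hom(E_{-(\alpha.\beta)},-)$ to the resolution $0\to E_{-\alpha-3}\te\C^{m_1}\to E_{-\beta}\te\C^{m_2}\to U\to 0$ of \S\ref{ssec-equality}, together with the orthogonality of the triad $(E_{-\alpha-3},E_{-\beta},E_{-(\alpha.\beta)})$, shows the general $U\in M(\xi)$ is cohomologically orthogonal to $E_{\alpha.\beta}$. By Lemma~\ref{lem-estimates}(1) and $\alpha<\alpha.\beta$ one gets $\mu(U\te E_{\alpha.\beta})=\mu(\xi)+\alpha.\beta>\mu(\xi)+\alpha>-\tfrac32$; since general cohomological orthogonality is impossible when $-3<\mu(U\te E_{\alpha.\beta})<0$ (the Remark after Proposition~\ref{BNProp}), in fact $\mu(U\te E_{\alpha.\beta})\ge 0$. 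Then Proposition~\ref{BNProp}(2) gives $\OO_{M(\xi)}(D_{E_{\alpha.\beta}})\cong\lambda_M(\xi_{\alpha.\beta})$ and Corollary~\ref{primaryCor}(1) places this class in the primary half of $\NS(M(\xi))\cong\xi^\perp$; note $\xi_{\alpha.\beta}\in\xi^\perp$ because $(\xi,\xi_{\alpha.\beta})=0$. With Proposition~\ref{exceptionalDivProp} (which already gives that $D_{E_{\alpha.\beta}}$ is a divisor) this settles the class statement and reduces the theorem to showing $[D_{E_{\alpha.\beta}}]$ is extremal in $\Eff(M(\xi))$.

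Next I would analyze $\pi$ near $D_{E_{\alpha.\beta}}$. By Proposition~\ref{exceptionalDivProp}, $M^s(\xi)\sm D_{E_{\alpha.\beta}}$ is exactly the locus of sheaves admitting the resolution above; by Lemma~\ref{complexMap} such a $U$ is determined up to isomorphism by the associated Kronecker module, by Proposition~\ref{resolutionProp} the general such module has stable cokernel, and both correspondences run in families. Hence $\pi$ restricts to an isomorphism of $M^s(\xi)\sm D_{E_{\alpha.\beta}}$ onto an open subvariety of $Kr_N(m_1,m_2)$, so any prime divisor contracted by $\pi$ lies in $D_{E_{\alpha.\beta}}$. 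Now $Kr_N(m_1,m_2)$ is $\QQ$-factorial of Picard rank $1$ (a GIT quotient of a smooth variety with $\Pic\cong\Z$), whereas $M(\xi)$ is factorial of Picard rank $2$; a small birational map between $\QQ$-factorial projective varieties induces an isomorphism on N\'{e}ron--Severi spaces, so $\pi$ cannot be small and therefore contracts a divisor. To conclude this divisor is $D_{E_{\alpha.\beta}}$ I also need $D_{E_{\alpha.\beta}}$ irreducible: its determinantal structure and the fact that the general point of each component has $h^0(U\te E_{\alpha.\beta})=1$ give reducedness (it is a Cartier divisor on the Gorenstein variety $M(\xi)$), and irreducibility follows by exhibiting $D_{E_{\alpha.\beta}}$ as dominated by the irreducible family of extensions $0\to E_{-(\alpha.\beta)}\to U\to Q\to 0$, with $Q$ ranging over the irreducible stack of prioritary sheaves of character $\xi-\xi_{-(\alpha.\beta)}$ and the extension class varying; here one uses $\mu(\xi)+\alpha.\beta\ge 0$ to see the general such extension is a subsheaf inclusion with torsion-free cokernel. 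Thus $D_{E_{\alpha.\beta}}$ is precisely the $\pi$-exceptional divisor.

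Finally, since $D_{E_{\alpha.\beta}}$ is irreducible and $\pi$ contracts it onto a subvariety of codimension at least two in the normal variety $Kr_N(m_1,m_2)$, pushing forward along $\pi$ (an isomorphism off codimension two) and using normality of $Kr_N$ give $h^0(\OO_{M(\xi)}(mD_{E_{\alpha.\beta}}))=1$ for all $m\ge 0$. Because linear and numerical equivalence coincide on $M(\xi)$, a relation $[D_{E_{\alpha.\beta}}]=[D_1]+[D_2]$ with $D_1,D_2$ nonzero effective would, after clearing denominators inside $|mD_{E_{\alpha.\beta}}|=\{mD_{E_{\alpha.\beta}}\}$, force $D_1$ and $D_2$ to be supported on $D_{E_{\alpha.\beta}}$; hence $[D_i]\in\R_{\ge 0}[D_{E_{\alpha.\beta}}]$, so $[D_{E_{\alpha.\beta}}]$ spans an extremal ray, which by the first paragraph is the primary edge. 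The hard part is the birational geometry of $\pi$ along $D_{E_{\alpha.\beta}}$: one must verify rigorously that $\pi$ is an honest isomorphism (not just a birational identification) away from $D_{E_{\alpha.\beta}}$—where the stack-theoretic subtleties of Proposition~\ref{BNProp} reappear—and establish irreducibility of $D_{E_{\alpha.\beta}}$ by controlling the quotient sheaves $Q$.
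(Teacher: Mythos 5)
Your computation of the class of $D_{E_{\alpha.\beta}}$ is fine, but your route to extremality inverts the paper's logic in a way that leaves real gaps. The paper does not analyze the birational map $\pi$ at all here: it takes the complete family over $S=\P\Hom(E_{-\alpha-3}^{m_1},E_{-\beta}^{m_2})$, uses \cite[Lemma 18.3.1]{LePotierLectures} to see that the non-stable locus has codimension at least $2$ in $S$, and so finds a complete curve in $S$ through a point lying over a general $U\in M(\xi)$ and contained in the stable locus. Every sheaf in that family has the resolution, hence lies off $D_{E_{\alpha.\beta}}$, so the induced moving curve has intersection number zero with $D_{E_{\alpha.\beta}}$; in a rank-two N\'eron--Severi space this forces extremality. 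Reducedness and irreducibility are then deduced \emph{from} extremality together with the primitivity of $\xi_{\alpha.\beta}$ (since $r_{\alpha.\beta}$ and $c_1(\xi_{\alpha.\beta})$ are coprime, $\xi_{\alpha.\beta}$ generates $K(\P^2)\cap \Q\xi_{\alpha.\beta}$, so an effective decomposition of $[D_{E_{\alpha.\beta}}]$ along its own ray must be trivial). You need irreducibility as an \emph{input}, and your argument for it is incomplete: the family of extensions $0\to E_{-(\alpha.\beta)}\to U\to Q\to 0$ with $Q$ torsion-free and prioritary only dominates components of $D_{E_{\alpha.\beta}}$ whose general member has such a quotient. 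The cokernel of a nonzero (hence injective) map $E_{-(\alpha.\beta)}\to U$ can have torsion or fail to be prioritary, and $\hom(E_{-(\alpha.\beta)},U)$ can jump, so nothing you say rules out a component of $D_{E_{\alpha.\beta}}$ consisting entirely of such sheaves, which your family would miss.

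The birational-geometry half also rests on two unproved assertions. First, that $\pi$ restricts to an isomorphism from $M^s(\xi)\sm D_{E_{\alpha.\beta}}$ (minus codimension two) onto an open subset of $Kr_N(m_1,m_2)$ whose complement has codimension two: this requires the Kronecker module of \emph{every} such $U$ (not just the general one) to be GIT-semistable, and in fact stable so that Lemma~\ref{complexMap} gives injectivity into the quotient rather than into the set of $S$-equivalence classes, and it requires the image of $\pi$ to miss no divisor of $Kr_N(m_1,m_2)$. Proposition~\ref{fibrationProp} only gives birationality; a divisor's worth of $U\notin D_{E_{\alpha.\beta}}$ with unstable modules, or a divisor in $Kr_N(m_1,m_2)$ outside the image, would invalidate the computation $h^0(\OO(mD_{E_{\alpha.\beta}}))=1$. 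Second, you invoke $\Q$-factoriality of $Kr_N(m_1,m_2)$ to argue that $\pi$ cannot be small; Dr\'ezet's result gives normality and $\Pic\cong\Z$, but $\Q$-factoriality is a separate claim that you do not justify. None of these assertions is obviously false, but each needs an argument, and as written the proof is not complete; the paper's moving-curve argument bypasses all of them.
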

\begin{proof}
By Proposition \ref{exceptionalDivProp}, $D_{E_{\alpha.\beta}}$ is an effective divisor, and it is the complement of the locus of sheaves $U$ with a resolution of the expected form $$0\to E_{-\alpha-3}^{m_1}\to E_{-\beta}^{m_2} \to U\to 0.$$ To see that it is extremal, we produce a dual moving curve.  Let $$S = \P\Hom(E_{-\alpha-3}^{m_1},E_{-\beta}^{m_2}),$$ and let $\F U/S$ be the universal cokernel sheaf.  It is a complete family of prioritary sheaves with $\Delta>\delta(\mu)$, so by \cite[Lemma 18.3.1]{LePotierLectures} the open set $S' \subset S$ consisting of those $s$ such that $U_s$ is stable has complement of codimension at least $2$.  Thus for a general sheaf $U\in M(\xi)$ we can find a complete curve in $S'$ passing through a point $s$ corresponding to $U$.  Any such curve is disjoint from $D_{E_{\alpha.\beta}}$, so this divisor is extremal.

The divisor $D_{E_{\alpha.\beta}}$ is reduced and irreducible because it is extremal and $\xi_{\alpha.\beta}$ is a generator of $K(\P^2) \cap \Q\xi_{\alpha.\beta}$.  Indeed, $c_1(\xi_{\alpha.\beta})$ and $r_{\alpha.\beta}$ are coprime, so no integral class $\zeta$ of rank smaller than $r_{\alpha.\beta}$ has slope $\alpha.\beta$.
\end{proof}

In the exceptional case, the primary edge of the movable cone of divisors is considerably more challenging to describe.  We will report on progress on this question in future work.

\subsection{The rank $0$ case}\label{ssec-rank0}  In this subsection, we discuss the primary edge of the effective cone of a moduli space $M(\xi)$ of semistable rank $0$ sheaves.  We normalize our Chern character $\xi = (0,d,\chi)$ in terms of the first Chern class $d$ and Euler characteristic $\chi$.  Assuming the Picard rank is $2$, we have $d\geq 3$.  The general point $U\in M(\xi)$ corresponds to a line bundle of Euler characteristic $\chi$ supported on a smooth plane curve of degree $d$.

The primary difference with the positive rank case is that the parabolas $Q_\xi$ from the positive rank case degenerate to vertical lines in the $(\mu,\Delta)$-plane.  As before, we define a locus $$Q_\xi = \{(\mu,\Delta): (1,\mu,\Delta) \textrm{ lies in } \xi^\perp\} \subset \R^2$$ as a subset of the $(\mu,\Delta)$-plane.  A Riemann-Roch calculation shows $Q_\xi$ is the vertical line $$Q_\xi:\mu=-\frac{\chi}{d}.$$ It therefore intersects the line $\Delta = \frac{1}{2}$ in the rational point $(-\frac{\chi}{d},\frac{1}{2})$.  The corresponding exceptional slope to $\xi$ is the slope $\alpha\in \F E$ with $-\frac{\chi}{d}\in I_\alpha$.  The orthogonal invariants $(\mu^+,\Delta^+)$ are defined in the same way as in the general case. In Figure \ref{figure-arrangementsRank0} we display the possible arrangements of the curves involved.

Once the orthogonal invariants have been defined, there is no major difficulty in carrying out  arguments similar to those given for the general case.  The necessary minor technical changes were already considered in \cite{Woolf}.  All the results of \S \ref{ss-primaryEffective} hold verbatim.

\begin{figure}[htbp]
\begin{center}
\hspace{-.2in}\input{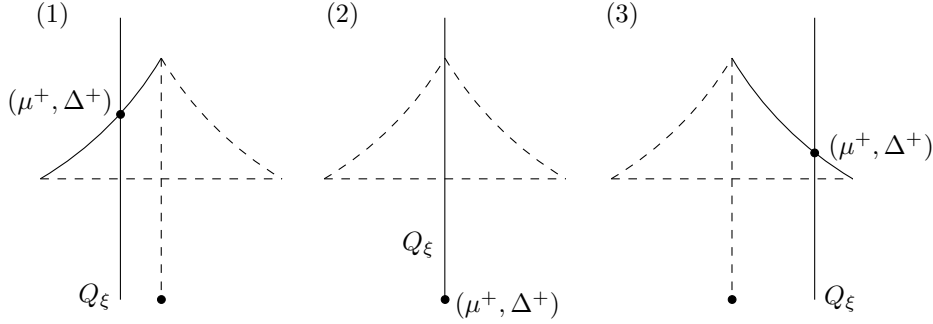}
\end{center}
\caption{Graphical depiction of the corresponding orthogonal invariants in the rank $0$ case, (1) when $(\xi,\xi_\alpha)>0$, (2) when $(\xi,\xi_\alpha)=0$, and (3) when $(\xi,\xi_\alpha)<0$.}
\label{figure-arrangementsRank0}
\end{figure}

\subsection{The secondary edge of the effective cone}\label{ssec-secondary} We now discuss the behavior of the secondary edge of the effective cone.

  When $\xi$ has small rank, the secondary edge of the effective cone exhibits special behavior, which we now recall.
\begin{itemize}
\item If $r(\xi)=0$ and $d=c_1(\xi)\geq 3$, there is a map $M(\xi)\to |\OO_{\P^2}(d)|$ sending a sheaf to its support.  This map has positive-dimensional fibers, and the pullback of $\OO(1)$ spans the secondary edge of the nef, movable, and effective cones.  See \cite{Woolf} for more information about this class.

\item If $r(\xi)=1$, then $M(\xi)$ is isomorphic to a Hilbert scheme of points in the plane.  The secondary edge of the effective cone is spanned by the exceptional divisor $B$ of the Hilbert-Chow morphism $\P^{2[n]}\to \P^{2(n)}$.  The secondary edge of the movable cone is spanned by the divisor $H$ given as the locus of schemes meeting a fixed line in the plane \cite{ABCH}.  

\item If $r(\xi)=2$, the exceptional locus $E$ of the morphism $M(\xi)\to M^{DUY}(\xi)$ is divisorial, and it spans the secondary edge of the effective cone.  Its support is the locus of singular sheaves.  The secondary edge of the movable cone coincides with the ray giving this morphism.  This case was considered in detail in \cite{Stromme}.
\end{itemize}

We note that the classes of the previous extremal effective divisors can be easily determined in terms of the isomorphism $\lambda_M: \xi^\perp \to \NS(M(\xi))$.  We concentrate on the rank $2$ case, as the lower rank cases are already well-understood \cite{Woolf,ABCH}.

\begin{proposition}
When $r(\xi)= 2$, the secondary edge of the effective cone is the ray spanned by $\lambda_M(\zeta)$, where $\zeta$ is a negative rank orthogonal Chern character with $\mu(\xi\te \zeta) = -\frac{3}{2}$.
\end{proposition}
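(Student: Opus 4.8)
The plan is to combine the quoted theorem of Strømme, which identifies the secondary extremal divisor, with a direct computation of its class inside a complete family via Thom--Porteous and Grothendieck--Riemann--Roch.

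\emph{Step 1: the geometric input.} By \cite{Stromme} (see the itemized discussion preceding this proposition) the secondary edge of $\Eff(M(\xi))$ is spanned by the reduced irreducible divisor $E\subset M(\xi)$ supported on the locus of sheaves which are not locally free, and $E$ is contracted by the morphism $j\colon M(\xi)\to M^{DUY}(\xi)$ defined by $\F L_1=\lambda_M(\zeta_1)$.  Geometrically, a general $U\in E$ sits in an exact sequence $0\to U\to U^{**}\to\OO_p\to 0$ with $U^{**}$ a $\mu$-stable rank-two bundle and $p\in\P^2$; fixing $U^{**}$ and $p$ and letting the surjection $U^{**}\to\OO_p$ range over $\P\big((U^{**})_p^{*}\big)\cong\P^1$ sweeps out a fiber of $j|_E$.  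It therefore remains only to identify the class $[E]\in\NS(M(\xi))\cong\xi^\perp$.

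\emph{Step 2: the class of $E$ by Thom--Porteous.} Let $\F U/S$ be a complete family of stable sheaves of character $\xi$ over a smooth base $S$, with moduli map $f\colon S\to M(\xi)$ (if no universal family exists, work on the stack $\F M^s(\xi)$ as in Proposition \ref{BNProp}), and set $D_{E,S}=f^{-1}(E)$.  Since each $U_s$ is torsion-free on the smooth surface $\P^2$ it has homological dimension $\le 1$, so $\F U$ has homological dimension $\le 1$ on $S\times\P^2$ and admits a global locally free resolution $0\to\F F_1\xrightarrow{\phi}\F F_0\to\F U\to 0$.  The cokernel $\coker\phi=\F U$ fails to be locally free exactly along the degeneracy locus $Z=\{x:\ \rk(\phi\otimes k(x))<\rk\F F_1\}$.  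This is where $r(\xi)=2$ enters: the expected codimension of $Z$ is $r(\xi)+1=3$, and by Step 1 this is its actual codimension (since $p(Z)=D_{E,S}$ is a divisor), so Thom--Porteous gives $[Z]=c_{r(\xi)+1}\big([\F F_0]-[\F F_1]\big)=c_3\big([\F U]\big)$ in $A^3(S\times\P^2)$.  As a general $U_s\in E$ is singular at exactly one point, $p\colon S\times\P^2\to S$ restricts to a birational morphism $Z\to D_{E,S}$, and hence $[D_{E,S}]=p_*\,c_3([\F U])\in\Pic(S)$, up to a positive multiple.

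\emph{Step 3: matching with $\lambda_M$.} For $\eta\in\xi^\perp$ represented by a virtual bundle $V$, $\lambda_{\F U}(\eta)=\det\big(-Rp_*(\F U\otimes q^*V)\big)$, and Grothendieck--Riemann--Roch gives $\lambda_{\F U}(\eta)=-p_*\big[\ch(\F U)\cdot q^*(\ch V\cdot\Td(\P^2))\big]_{3}$.  Expanding both $c_3([\F U])$ and this integrand in the Künneth basis $A^1(S\times\P^2)=p^*A^1(S)\oplus\Z\,q^*H$ and pushing forward by $p$ writes each side as a fixed combination of three classes in $A^1(S)$; comparing coefficients (and using $\ch_2(\xi)=\tfrac14 c_1(\xi)^2-2\Delta(\xi)$) identifies $[E]$, up to positive scalar, with $\lambda_M(\zeta)$ for the orthogonal character $\zeta=\big(-r(\xi),\,c_1(\xi)+3,\,*\big)$, where $*$ is forced by $\zeta\in\xi^\perp$.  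For this $\zeta$ we have $r(\zeta)<0$ and $\mu(\xi\otimes\zeta)=\mu(\xi)+\mu(\zeta)=\tfrac{c_1(\xi)}{2}-\tfrac{c_1(\xi)+3}{2}=-\tfrac32$, which is the assertion.  As a consistency check, tensoring the sequence defining $\F U$ along the contracted curve $\P^1\subset E$ of Step 1 by $q^*V$ and applying $Rp_*$ shows $\lambda_M(\eta)\cdot\P^1=r(\eta)$ for all $\eta\in\xi^\perp$; this vanishes for $\eta=\zeta_1$ (consistent with $j$ contracting $\P^1$) and is negative for $\eta=\zeta$ (as it must be, since $\P^1$ is a fiber of the divisorial contraction $j|_E$).

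\emph{Main obstacle.} The delicate point is the part of Steps 1--2 asserting that $Z$ has the expected codimension, is generically reduced, and maps birationally onto a reduced divisor $D_{E,S}$, so that the Thom--Porteous cycle pushes forward with multiplicity one; this is precisely Strømme's analysis of the singular locus, which we invoke.  The remaining Chern-class bookkeeping in Steps 2--3 is routine.
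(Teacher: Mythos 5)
Your argument is correct, and it reaches the same class $\zeta$ as the paper, but the computation of $[E]$ is carried out by a genuinely different route. Both you and the paper outsource the geometric content to Str\o mme: that for $r(\xi)=2$ the locus of non--locally-free sheaves is an irreducible divisor $E$ contracted by $j$ and spanning the secondary extremal ray, and that the generic $U\in E$ has a single simple singular point. The difference is in how the class is extracted. The paper uses Str\o mme's scheme structure on $E$ as the zero locus of the $0$th Fitting ideal of $\F F = p_*\F Ext^1(\F U,q^*\OO_{\P^2}(-1))$, together with his canonical two-term locally free resolution $0\to A\to B\to \F F\to 0$ whose terms have determinants $\lambda_{\F U}(\zeta_1)$, $\lambda_{\F U}(\zeta_2)$; the class is then read off as $\lambda_{\F U}(\zeta_2-\zeta_1)$ with essentially no Chern-class computation. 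You instead resolve the universal family itself by a length-one locally free complex (valid since torsion-free sheaves on a surface have homological dimension $\le 1$ and $\F U$ is flat over the smooth base), identify the singular locus with the top degeneracy locus of $\phi$, apply Thom--Porteous in the expected codimension $3$ to get $[Z]=c_3([\F U])$, and push forward via Grothendieck--Riemann--Roch. I checked the bookkeeping: $p_*c_3([\F U]) = 2c_{12}-c_1(\xi)c_{11}+\bigl(\tfrac12 c_1(\xi)^2-\ch_2(\xi)\bigr)c_{10}$ in the K\"unneth coordinates of $\ch(\F U)$, which matches $\lambda_{\F U}(\zeta)$ exactly for $\zeta$ with $r(\zeta)=-2$, $\ch_1(\zeta)=c_1(\xi)+3$, and $\ch_2(\zeta)$ forced by $\zeta\in\xi^\perp$; this gives $\mu(\xi\te\zeta)=-\tfrac32$ as required, and the expression is invariant under twisting $\F U$ by $p^*L$, as it must be. What your approach buys is independence from Str\o mme's particular resolution of $\F F$ (only his qualitative description of the singular locus is needed); what it costs is the multiplicity discussion you flag at the end --- though note that since the proposition only identifies a \emph{ray}, any positive pushforward multiplicity suffices, so the generic-reducedness of $Z$ is not actually essential to your argument, only that $Z\to D_{E,S}$ is generically finite onto a divisor supported on $f^{-1}(E)$. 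Your closing consistency check $\lambda_M(\eta)\cdot\P^1 = r(\eta)$ on the contracted curves is correct and could even be promoted to an independent proof of extremality of $[E]$, since a covering family of curves meeting $E$ negatively forces $E$ into the base locus of any divisor class on that side of $\F L_1$.
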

\begin{proof}
Stromme \cite[Proposition 4.6]{Stromme} describes how to put a scheme structure on the locus $E$ of singular sheaves, at least when $M(\xi)$ admits a universal family.  When $M(\xi)$ doesn't carry a universal family, the scheme structure can still first be described locally by similar methods to the proof of Proposition \ref{BNProp}.  If $\F U/S$ is a complete family of sheaves on $\P^2$ with Chern character $\xi$, the divisor $E_S$ on $S$ of singular sheaves is cut out by the $0$th Fitting ideal of the sheaf $\F F = p_* \F Ext^1(\F U,q^* \OO_{\P^2}(-1))$ on $S$, and $\OO_S(E_S) \cong \det \F F$.  Stromme describes a canonical locally free resolution of $\F F$ of the form $$0\to A\to B\to \F F\to 0$$ where $A$ and $B$ are sheaves whose determinants are of the form $\det A = \lambda_{\F U}(\zeta_1)$ and $\det B = \lambda_{\F U}(\zeta_2)$ for some $\zeta_i\in K(\P^2)$.  An easy computation shows that $\det \F F = \lambda_{\F U}(\zeta)$, where $\zeta = \zeta_2-\zeta_1$ is an orthogonal Chern character to $\xi$ with $\OO_{M(\xi)}(E) = \lambda_M(\zeta)$, and  $\mu(\xi\te \zeta)=-\frac{3}{2}$.  
\end{proof}

Suppose $r(\xi)\geq 3$.  The complement of the open set in $M(\xi)$ consisting of stable locally-free sheaves has codimension at least $2$ by \cite[Corollary 17.1.5 and Lemma 18.3.1]{LePotierLectures}.  Thus the functor $\sHom(-,\OO_{\P^2}(-3))$ gives a birational map $M(\xi)\dashrightarrow M(\xi^D)$, where $\xi^D$ denotes the Serre dual Chern character, and this map is an isomorphism between the loci of stable locally free sheaves.  It therefore induces an isomorphism of Picard groups $\Pic(M(\xi)) \cong \Pic(M(\xi^D))$ which preserves effective cones.

However, this isomorphism does \emph{not} preserve the primary edge of the effective cone.  The $h^2$-Brill-Noether divisor $D_V$ on $M(\xi)$ is carried to the $h^0$-Brill-Noether divisor $D_{V^*}$ on $M(\xi^D)$, and vice versa.  Thus, by Corollary \ref{primaryCor}, the isomorphism takes the primary edge of the effective cone of $M(\xi^D)$ to the secondary edge of the effective cone of $M(\xi)$.  We can therefore determine the secondary edge of the effective cone of $M(\xi)$ in terms of the primary edge of the effective cone of the Serre dual moduli space.

\begin{definition}
Suppose $r(\xi)\geq 3$.  Let $(\mu_D^+,\Delta_D^+)$ be the corresponding orthogonal invariants to $\xi^D$.  The \emph{secondary corresponding orthogonal invariants to $\xi$} are $(\mu^{-},\Delta^{-})$, given by $$\mu^-= -\mu_D^+ \qquad \textrm{and} \qquad \Delta^-=\Delta_D^+.$$ The \emph{secondary corresponding exceptional slope to $\xi$} is given by $-\alpha_D$, where $\alpha_D$ is the primary corresponding exceptional slope to $\xi^D$.
\end{definition}

The following theorem is immediate.

\begin{theorem}
If $r(\xi)\geq 3$, let $\xi^{-} = (r^-,\mu^-,\Delta^-)$  where $r^{-}$ is sufficiently large and divisible.  There exist stable vector bundles $V$ of Chern character $\xi^{-}$ cohomologically orthogonal to the general $U\in M(\xi)$.  The $h^2$-Brill-Noether divisor $D_V$ spans the secondary edge of the effective cone of $M(\xi)$.
\end{theorem}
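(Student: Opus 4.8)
The plan is to deduce this statement from the primary-edge results already established (Theorem \ref{effThm}, and Theorem \ref{excEffThm} in the exceptional case) applied to the Serre dual moduli space $M(\xi^D)$, transporting the conclusion along the birational map $\sHom(-,\OO_{\P^2}(-3)):M(\xi)\dashrightarrow M(\xi^D)$ discussed just above. So the proof is really bookkeeping on the dual side.

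\emph{Step 1: the primary edge of $M(\xi^D)$.} Let $(\mu_D^+,\Delta_D^+)$ be the primary corresponding orthogonal invariants to $\xi^D$, and let $\xi_D^+=(r_D^+,\mu_D^+,\Delta_D^+)$ with $r_D^+$ large and divisible, so that a general $V'\in M(\xi_D^+)$ is a stable locally free sheaf. By Theorems \ref{effThm} and \ref{excEffThm}, the Brill-Noether locus $D_{V'}=\{W\in M(\xi^D):h^0(W\te V')\neq 0\}$ is a divisor spanning the primary edge of $\Eff(M(\xi^D))$, and the general $W\in M(\xi^D)$ is cohomologically orthogonal to $V'$ with $\mu(W\te V')\geq 0$. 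Put $V=(V')^*$; this is again a stable locally free sheaf, and by definition of the secondary corresponding orthogonal invariants its Chern character is $\xi^-=(r^-,\mu^-,\Delta^-)$ with $r^-=r_D^+$, $\mu^-=-\mu_D^+$, $\Delta^-=\Delta_D^+$. (In the exceptional case $(\xi^D,\xi_{\alpha_D.\beta_D})=0$, where $V'$ is the exceptional bundle $E_{\alpha_D.\beta_D}$, one simply takes $V=E_{-(\alpha_D.\beta_D)}$; the argument below is unchanged.)

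\emph{Step 2: cohomological orthogonality and the slope count on $M(\xi)$.} Since $r(\xi)\geq 3$, the general $U\in M(\xi)$ is a stable locally free sheaf, and $W:=\sHom(U,\OO_{\P^2}(-3))=U^*(-3)$ is a general point of $M(\xi^D)$, the dual functor being an isomorphism away from loci of codimension at least $2$ by \cite[Corollary 17.1.5 and Lemma 18.3.1]{LePotierLectures}. Serre duality on $\P^2$ gives $h^i(U\te V)=h^i(U\te (V')^*)=h^{2-i}(V'\te U^*(-3))=h^{2-i}(W\te V')$ for all $i$, so the cohomological orthogonality of $W$ to $V'$ gives that of $U$ to $V$; in particular such stable bundles $V$ exist. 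Moreover $\mu(U\te V)=\mu(\xi)+\mu^-=-\mu(\xi^D)-3-\mu_D^+=-\mu(W\te V')-3\leq -3<0$. Hence the general $U$ lies outside $D_V$, so $D_V$ (as in Proposition \ref{BNProp}) is an effective divisor, and by the Remark following Proposition \ref{BNProp} it is exactly the locus where $H^2(U\te V)$ jumps, i.e. the $h^2$-Brill-Noether divisor; by Proposition \ref{BNProp}(3) and Corollary \ref{primaryCor}(2) its class is $\lambda_M(\xi^-)$ and lies in the secondary half of $\NS(M(\xi))$.

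\emph{Step 3: extremality.} The dual functor induces an isomorphism $\Pic(M(\xi))\cong\Pic(M(\xi^D))$ preserving effective cones, and, by the identity $h^i(U\te V)=h^{2-i}(W\te V')$ valid away from codimension $2$, it carries the $h^2$-divisor $D_V$ to the $h^0$-divisor $D_{V'}$. Since $D_{V'}$ spans an extremal ray of $\Eff(M(\xi^D))$, so does $D_V$ in $\Eff(M(\xi))$; being in the secondary half by Step 2, $D_V$ spans the secondary edge, as claimed.

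The main obstacle is the one nontrivial point in Step 3: verifying that the Picard isomorphism induced by $\sHom(-,\OO_{\P^2}(-3))$ is compatible both with the $\lambda_M$ construction of Proposition \ref{BNProp} and with Serre duality, so that the $h^0$-Brill-Noether divisor on $M(\xi^D)$ genuinely extends across the codimension-$2$ indeterminacy locus to the $h^2$-Brill-Noether divisor on $M(\xi)$ with class $\lambda_M(\xi^-)$. Everything else is a direct transcription of Theorems \ref{effThm}--\ref{excEffThm} together with Serre duality and the codimension estimates of \cite{LePotierLectures}.
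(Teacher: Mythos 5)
Your proposal is correct and is essentially the paper's own argument: the paper sets up exactly this Serre-duality dictionary (the functor $\sHom(-,\OO_{\P^2}(-3))$ is an isomorphism off a codimension-$2$ locus, induces an isomorphism of Picard groups preserving effective cones, and exchanges $h^0$- and $h^2$-Brill-Noether divisors), and then states the theorem as immediate, so your Steps 1--3 are just the careful transcription of that. One small bookkeeping point: by Proposition \ref{BNProp}(3) the class of $D_V$ is $\lambda_M(-\xi^-)$ rather than $\lambda_M(\xi^-)$ (consistent with its lying in the secondary, i.e.\ negative-rank, half of $\NS(M(\xi))$), but since the theorem only asserts which ray $D_V$ spans, this does not affect the argument.
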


Corollary \ref{movingCor} and Theorem \ref{excEffThm} also have immediate generalizations for the secondary edges of the effective and movable cones.

\begin{example}
We close the paper with a typical computation of the effective cone of a moduli space $M(\xi)$.  

\begin{figure}[htbp]
\begin{center}
\includegraphics[scale=0.8]{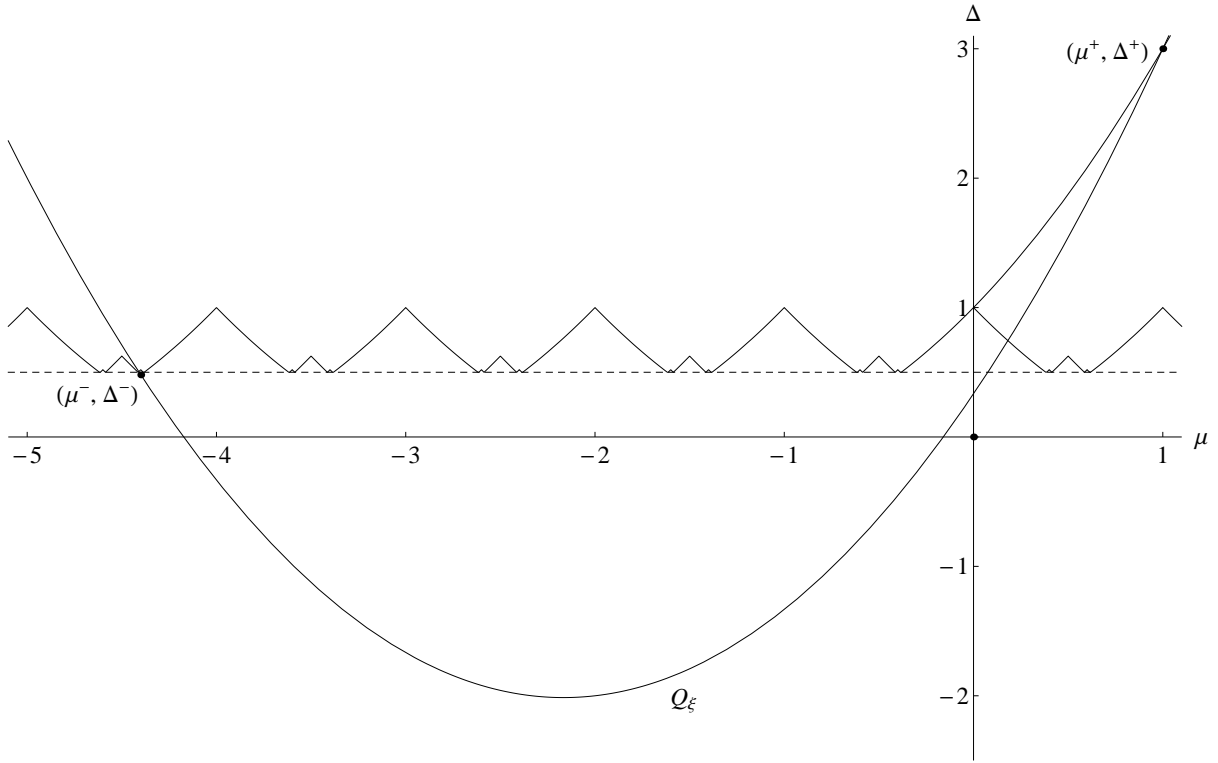}
\end{center}
\caption{Computing the corresponding orthogonal invariants for the Chern character $\xi$ of rank $3$, slope $\frac{2}{3}$, and discriminant $\frac{17}{9}$.}
\label{figure-rank3Example}
\end{figure}

Consider the Chern character $\xi = (r,\mu,\Delta) = (3,\frac{2}{3},\frac{17}{9}).$    The moduli space $M(\xi)$ has dimension $r^2(2\Delta-1)+1=26$.  The parabola $Q_\xi$ intersects the line $\Delta=\frac{1}{2}$ at the two points $(\mu_0^{\pm},\frac{1}{2})$ given by $$\mu_0^{\pm} = \frac{1}{6}(-13\pm \sqrt{181}),$$ so $$\mu_0^+ \approx 0.076 \qquad \textrm{and} \qquad \mu_0^-\approx -4.409.$$  Then $\mu_0^+\in I_0$ and $\mu_0^-\in I_{-\frac{22}{5}}$, so the primary and secondary corresponding exceptional slopes are $0$ and $-\frac{22}{5}$, respectively (See Figure \ref{figure-rank3Example}).

Consider the primary edge of the effective cone.  We have $(\xi,\xi_0)=1$, so the primary orthogonal invariants $(\mu^+,\Delta^+)$ are given by the intersection $Q_\xi\cap Q_{\xi_0}$.  We find $(\mu^+,\Delta^+) = (1,3)$.  A primary orthogonal Chern character is any character $\xi^+ = (r^+,\mu^+,\Delta^+)$ with sufficiently large and divisible rank.  There are stable vector bundles $V$ with Chern character $\xi^+$ cohomologically orthogonal to the general $U\in M(\xi)$, and Brill-Noether divisors $D_V$ span the primary edge of the effective cone.

To make the proof of the theorems explicit in this case, we observe that the general $U\in M(\xi)$ has a resolution of the form $$0\to \OO_{\P^2}(-2)^4\to \OO_{\P^2}(-1)^6\oplus \OO_{\P^2}\to U\to 0.$$ The Kronecker fibration is a rational map $\pi:M(\xi) \dashrightarrow Kr_3(4,6)$, and this space of Kronecker modules has dimension $21$ and Picard rank $1$. The divisor $D_V$ is the pullback of a multiple of the ample generator on $Kr_3(4,6)$, so it is an extremal effective divisor.

For the secondary edge of the effective cone, we observe that $Q_\xi$ passes through the invariants $\xi_{-\frac{22}{5}}$ of the exceptional bundle of slope $-\frac{22}{5}$.  The primary corresponding exceptional slope to the Serre dual character $\xi^D$ is $\frac{22}{5}$, and we find $(\mu^-,\Delta^-) = (-\frac{22}{5},\frac{12}{25})$.  Furthermore, the general $U_D\in M(\xi^D)$ admits a resolution of the form $$0\to \OO_{\P^2}(-7)\to T_{\P^2}(-6)^2\to U_D\to 0.$$  We have $\hom(\OO_{\P^2}(-7),T_{\P^2}(-6))=15,$ so the Kronecker map is a birational map $M(\xi^D) \dashrightarrow Kr_{15}(1,2)$. The Brill-Noether divisor $D_{E_{22/5}}$ spans the primary edge of the effective cone of $M(\xi^D)$.  Correspondingly, the $h^2$-Brill-Noether divisor $D_{E_{-22/5}}$ on $M(\xi)$ spans the secondary edge of the effective cone of $M(\xi)$.

\end{example}

\bibliographystyle{plain}

\end{document}